\newcommand{\R}{\mathbb{R}}
\newcommand{\N}{\mathbb{N}}
\newcommand{\Z}{\mathbb{Z}}
\newcommand{\cov}{\normalfont \text{Cov}}
\newcommand{\vol}{\normalfont \text{Vol}}
\newcommand{\pr}{\normalfont \text{pr}}
\newcommand{\GL}{\normalfont \text{GL}}
\newcommand{\ka}
{\kappa}
\newcommand{\al}
{\alpha}
\newcommand{\be}
{\beta}
\newcommand{\SL}{\normalfont \text{SL}}
\newcommand{\SO}{\normalfont \text{SO}}
\newcommand{\Ad}{\normalfont \text{Ad}}
\newcommand{\s}{\normalfont \textbf{s}}
\newcommand{\st}{\sqrt{t}}
\newcommand{\e}{\epsilon}
\newcommand{\de}{\delta}
\newcommand{\De}{\Delta}
\newcommand{\la}{\lambda}
\newcommand{\La}{\Lambda}
\newcommand{\Ga}{\Gamma}
\newcommand{\Sph}{\mathbb{S}}
\newcommand{\V}{\mathcal{V}}
\newcommand{\Span}{\normalfont \text{Span}}
\newtheorem{theorem}{Theorem}[section]
\newtheorem{lemma}[theorem]{Lemma}
\newtheorem{proposition}[theorem]{Proposition}
\newtheorem{corollary}[theorem]{Corollary}
\theoremstyle{definition}
\theoremstyle{remark}
\newtheorem{remark}[theorem]{Remark}
\numberwithin{equation}{section}
\title{Limiting distribution of dense orbits in a moduli space of rank $m$ discrete subgroups in $(m+1)$-space}
\author[1]{Michael Bersudsky\thanks{\href{mailto:bersudsky.1@osu.edu}{bersudsky.1@osu.edu}}}
\author[1]{Hao Xing\thanks{\href{mailto:xing.211@osu.edu}{xing.211@osu.edu}}}
\affil[1]{Department of mathematics, the Ohio State University, 100 Math Tower, 231 W 18th Ave, Columbus, OH 43210, USA}
\begin{document}
\date{}
\maketitle
\begin{abstract}
We study the limiting distribution of dense orbits of a lattice subgroup $\Gamma\leq\SL(m+1,\mathbb{R})$ acting on $H\backslash\SL(m+1,\mathbb{R})$, with respect to a filtration of growing norm balls. The novelty of our work is that the groups $H$ we consider have infinitely many non-trivial connected components. For a specific such $H$, the homogeneous space $H\backslash G$ identifies with $X_{m,m+1}$, a moduli space of rank $m$-discrete subgroups in $\mathbb{R}^{m+1}$. This study is motivated by the work of Shapira-Sargent who studied random walks on $X_{2,3}$. 
\end{abstract}

\section{Introduction}

%The interest in the shape of rank $k$ sublattices in $\R^d$ dates back to the work of Schmidt \cite{Schmidt1998TheDO}. A refining problem is studied by \cite{emss, AES_lat}. As we described below, the space $X_{m,m+1}$ is a homogeneous space of $\SL(m+1,\R)$ and it is natural to study the dynamics of the subgroups of $\SL(m+1,\R)$. 

%\cite{Sargent2017DynamicsOT},\cite{gorodnik2022stationary}, \cite{Oh05}.

%The case when $H$ is discrete done by \cite{Oh05} and generalized by \cite{Gorodnik2004DistributionOL}.

%The case when $H$ is algebraic were studied in generality by \cite{GorodnikNevo2012} \cite{Gorodnik2003LatticeAO} 

%Recall that the space of unimodular lattices in $\R^{m}$ can be identified with $G/\Ga:=\SL(m+1,\R)/\SL(3,\Z)$, for the action of matrix on column vectors on the left (or $\Ga\backslash G$ for the action of matrix on row vectors on the right).
A well studied topic in homogeneous dynamics concerns the limiting distributions of \emph{dense} orbits of a lattice $\Ga\leq G$ acting on a homogeneous space $H\backslash G$. 
An notable basic result in this line of research is the work of Ledrappier \cite{Led99} which gives the limiting distribution of the $\Ga\leq\SL(2,\R)$-dense orbits in $ \begin{bmatrix}
    1&0\\\R&1
\end{bmatrix}\backslash \SL(2,\R)\cong\R^2\smallsetminus \textbf{0}$. An important aspect of \cite{Led99} is the method of reducing the question on the limiting distribution of $\Ga$ orbits in $H\backslash G$ to a question on $H$ orbits in $G/\Ga$. This kind of idea, which we refer to as the duality principle, has appeared extensively in the literature, and we refer the reader to \cite{GN14duality} for a detailed exposition. In this paper, we continue the above line of research, and the main novelty of our work is that the subgroup $H\leq \SL(m+1,\R)$ we consider has \emph{infinitely} many non-trivial connected components. In this regard, we note that general results for a connected $H$ were obtained in \cite{Gorodnik2004DistributionOL,GN14duality}, and results for $H$ being a lattice were given in \cite{Oh05,Gorodnik2004DistributionOL}.  Our method will also exploit the duality principle through a general theorem of \cite{Gorodnik2004DistributionOL}. 

We now proceed to describe our results, and then we will explain our motivation to consider this particular setting. In what follows, $G:=\SL(m+1,\R)$, $\Ga\leq G$ is a lattice, and  \begin{equation}\label{eq: def of general H}
H:=\left \{\begin{bmatrix}
t^{-\frac{1}{m}}q & 0 \\
v &  t 
\end{bmatrix}:t> 0, q \in \De, v \in \R^m \right\},
\end{equation}where $\De\leq\SL(m,\R)$ is a lattice. In this setting, we have the following observation:
\begin{proposition}
 For each $x_0\in H\backslash G$, the orbit $x_0\Ga$  is dense.   
\end{proposition}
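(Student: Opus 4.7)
The plan is to invoke the duality principle (cf.\ \cite{GN14duality}): density of the $\Gamma$-orbit $x_0\Gamma$ in $H \backslash G$ is equivalent to density of the $H$-orbit $Hy$ in $G/\Gamma$, where $y = g_0\Gamma$ represents $x_0 = Hg_0$. It therefore suffices to show that $\overline{Hy} = G/\Gamma$ for every $y \in G/\Gamma$.

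The key observation is that $H$ contains the abelian unipotent subgroup $U = \{u_v : v \in \R^m\}$, obtained by setting $t = 1$ and $q = I$ in the definition of $H$. This $U$ is precisely the expanding horospherical subgroup of $G$ with respect to the one-parameter diagonal $a_t = \diag(t^{-1/m} I, t) \in H$ for $t > 1$. By Dani's theorem on horospherical orbits on $G/\Gamma$ (a topological consequence of the Ratner-type classification), each $U$-orbit is either dense in $G/\Gamma$ or closed with cocompact stabilizer in $U$. In the dense case, $\overline{Hy} \supseteq \overline{Uy} = G/\Gamma$ and the claim follows immediately.

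To handle the closed-orbit case, I would exploit the additional $A$- and $\Delta$-invariance of $\overline{Hy}$, together with the minimality of the $\Gamma$-action on the flag variety $G/P$, where $P \supset H$ is the maximal parabolic stabilizing $\R e_{m+1}$. The $A$-action scales the $U$-stabilizer lattice via $a_t \La a_t^{-1} = t^{1+1/m} \La$, producing a one-parameter family of closed $U$-orbits inside $\overline{Hy}$; the $\Delta$-action (with $\Delta$ embedded as $\diag(q, 1)$, normalizing $U$ via $q u_v q^{-1} = u_{v q^{-1}}$) produces further closed $U$-orbits. The main obstacle is precisely this closed-orbit case: ruling out proper closed $H$-invariant subsets requires combining the parabolic-orbit density on the base (equivalent to $\Gamma$-minimality on $G/P$) with a fiberwise equidistribution argument on the bundle $G/\Gamma \to G/P$, exploiting that $\Delta$ is a lattice in $\SL(m, \R)$ (and in particular Zariski dense by Borel density) to fill in the fibers, which are isomorphic to $\Delta \backslash \SL(m, \R)$.
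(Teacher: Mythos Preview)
Your duality reduction is correct and matches the paper exactly. From that point on, however, the paper's argument is much shorter than yours, and your version has a genuine gap.

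The paper observes that the \emph{connected component} $V=UA$ of $H$ already has every orbit dense in $G/\Gamma$; this is a known fact, for which the paper cites Proposition~1.5 of Dani \cite{Dani1980OrbitsOE}. Since $H\supset V$, density of $H$-orbits is immediate. No dichotomy, no case analysis, and in particular no use of $\Delta$ is needed.

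You instead work with $U$ alone and invoke a dense/closed dichotomy for horospherical orbits. The dense branch is fine, but the closed branch is only a sketch: you write ``I would exploit\ldots'' and then explicitly flag ``the main obstacle is precisely this closed-orbit case.'' That is an acknowledgment of a gap, not a proof. Moreover, the strategy you outline for the closed case is both unnecessarily elaborate and partly confused: there is no fibration $G/\Gamma\to G/P$ (the flag variety $G/P$ is compact and $G/\Gamma$ is not a bundle over it; you may be thinking of the fibration $H\backslash G\to P\backslash G$ on the \emph{other} side of the duality), and bringing in the $\Delta$-action is beside the point since density already holds for the connected component $V$, which does not see $\Delta$ at all.

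The clean fix is to replace your $U$-argument by a direct appeal to density of $V=AU$-orbits on $G/\Gamma$, as the paper does.
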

This statement follows from our Theorem \ref{equidistribution result on  G mod H}, but we briefly sketch a simple proof  --  A $\Ga$-orbit passing through $Hg$ is dense in $H\backslash G$ if and only if the ``dual" $H$-orbit passing through $g\Ga$ is dense in $G/\Ga$. It's well known that each orbit of the connected component of $H$ is dense, see e.g. Proposition 1.5 of \cite{Dani1980OrbitsOE}.  

We will be considering the filtration of the dense orbits given by the Hilbert-Schmidt norm: $$\|g\|=\sqrt{\text{Trace}(^tgg)}=\sqrt{\sum_{ij}g_{ij}^2},~g\in G.$$Namely, we denote \begin{equation}\label{eq:def of Ga T}
 \Ga_T:=\{\gamma \in \Ga:\|\gamma\|\leq T\},   
\end{equation}
and we consider the following probability measures on $H\backslash G$:
$$\mu_{T,x_0}:=\frac{1}{\#\Ga_T}\sum_{\gamma\in\Ga_T}\de_{x_0.\gamma},~T>0,x_0\in H\backslash G.$$

We will prove that the above measures converge as $T\to\infty$ to the explicit measure   $\tilde{\nu}_{x_0}$ which we describe now.
Consider the  group $P$ containing $H$, which is given by\begin{equation*}
    P:=\left \{\begin{bmatrix}
t^{-\frac{1}{m}}\eta & 0 \\
v &  t 
\end{bmatrix}:t> 0, \eta \in \SL(m,\R), v \in \R^m \right\}.
\end{equation*} The space $H\backslash G$ is naturally a fiber-bundle over $P\backslash G$ with respect to the natural map sending $$\pi(Hg):=Pg,$$
and the fibers are isomorphic to $\Delta\backslash\SL(m,\R)$. We note that the base space $P\backslash G$ is identified with $\Sph^{m}$ the unit sphere in $\R^{m+1}$ via the orbit map of the right action $w.g:=\frac{w (^tg^{-1})}{\|w (^tg^{-1})\|},~g\in G,~w\in\Sph^{m}$. Thus we may view $H\backslash G$ as a fiber bundle over the sphere. Notice that $\SO(m+1,\R)$ acts transitively on $P\backslash G$ (with respect to the natural action), and we denote by $\mu_{P.\SO(m+1,\R)}$ the right $\SO(m+1,\R)$-invariant probability on $P\backslash G=P.\SO(m+1,\R)$. Now we define measures on each fiber $$\pi^{-1}(P\rho)=HP\rho,~\rho\in\SO(m+1,\R).$$ Using the Iwasawa decomposition, we consider $$x_0=:H \begin{bmatrix} \text{g}_0  & 0 \\ \textbf{v}_0 & \nicefrac{1}{\det(\text{g}_0)} \end{bmatrix}\rho_0,$$where $|\det(\text{g}_0)|=1$ and $\rho_0\in\SO(m+1,\R)$, and we define the function \begin{equation}\label{ef:def of Phi_x_0}
    \Phi_{x_0}\left(H\begin{bmatrix} \eta  & 0 \\ v & 1 \end{bmatrix}\rho\right)=\sum_{q\in \Delta}\frac{1}{\|\text{g}_0^{-1}q \eta\|^{m^2}}.
\end{equation}
Here $\rho\in\SO(m+1,\R)$ and $\eta\in \SL(m,\R)$. The expression on the right is well-defined as the Hilbert-Schmidt norm is bi-$\SO(m+1,\R)$ invariant. The infinite sum is convergent due to Lemma \ref{lemma on the integral and summation in a ball} below. We naturally identify the ``standard" fiber $\pi^{-1}(P)=H\backslash P$ with $\Delta\backslash \SL(m,\R)$ by mapping $\De\eta\mapsto H\begin{bmatrix} \eta  & 0 \\ 0 & 1 \end{bmatrix}$, which allows to define the $\SL(m,\R)$-invariant measure $\mu_{H.\SL(m,\R)}$  on $H\backslash P$ with the choice of normalization such that the measure \begin{equation}\label{eq:def of measures on fibers}
    \nu_{x_0,\pi^{-1}(P)}(f):=\int_{\pi^{-1}(P)}f(y)\Phi_{x_0}(y)d\mu_{H.\SL(m,\R)}(y),~f\in C_c(H\backslash G)
\end{equation} is a probability measure. On any other fiber $\pi^{-1}(P\rho),~\rho\in\SO(m+1,R)$, we define the pushed measure $$\nu_{x_0,\pi^{-1}(P\rho)}:=\rho_*\nu_{x_0,\pi^{-1}(P)},$$where the pushforward is via the right translation by $\rho.$ Having defined the above measures on the base space and the fibers, we can now define our probability measure on $H\backslash G$ to be \begin{equation}\label{eq:def of nu tilde x0}
    \tilde\nu_{x_0}(f)=\int_{P.\SO(m+1,\R)}\nu_{x_0,\pi^{-1}(b)}(f)d\mu_{P.\SO(m+1,\R)}(b),~f\in C_c(H\backslash G).
\end{equation}
\begin{theorem}\label{equidistribution result on  G mod H}
 Let $\Ga\leq G$ be a lattice and fix ${x_0}\in H\backslash G$. Then, $\mu_{T,{x_0}}$ converges in the weak-* topology to $\tilde\nu_{x_0}$ as $T\to\infty$. 
\end{theorem}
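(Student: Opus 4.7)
The plan is to invoke the duality principle in the form of Gorodnik's general equidistribution theorem \cite{Gorodnik2004DistributionOL}, which reduces the statement to two analytic inputs on the dual left $H$-action on $G/\Ga$: (a) an asymptotic formula, uniform for $g$ in compact subsets of $G$, for the volume of the \emph{skewed} balls $B_T(g):=\{h\in H:\|hg\|\leq T\}$; and (b) a mean ergodic theorem stating that the normalized averages $\frac{1}{\mathrm{vol}(B_T(g))}\int_{B_T(g)}\varphi(hg\Ga)\,dh$ converge to $\int_{G/\Ga}\varphi\,d\mu_{G/\Ga}$ for every $\varphi\in C_c(G/\Ga)$. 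Combined with the Duke--Rudnick--Sarnak/Eskin--McMullen asymptotic $\#\Ga_T\sim c_\Ga\,\mathrm{vol}_G(\{g:\|g\|\leq T\})$, the duality principle then yields that $\mu_{T,x_0}$ converges to the probability measure on $H\backslash G$ whose density at $Hg$ is proportional to the leading coefficient of $\mathrm{vol}(B_T(g))$.

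For input (a), I would parametrize $H$ by $(t,q,v)\in\R_{>0}\times\De\times\R^m$, so that Haar measure on $H$ splits as a sum over the discrete set $\De$ of an explicit measure in $(t,v)$. Using the Iwasawa decomposition of $g$ written in the theorem statement and the bi-$\SO(m+1,\R)$-invariance of $\|\cdot\|$, the condition $\|hg\|\leq T$ becomes an explicit inequality in $(t,v)$ depending on the $q$-component. Performing the $(t,v)$-integration and rescaling by the appropriate power of $T$, one obtains
$$
\mathrm{vol}(B_T(g)) \;=\; c\,T^{\ka}\,\sum_{q\in\De}\|\text{g}_0^{-1} q\eta\|^{-m^2} \;+\; \text{lower order},
$$
i.e.\ $c\,T^{\ka}\,\Phi_{x_0}(Hg)$, where $\ka$ is the exponent of $G$-ball volume growth and convergence of the series is guaranteed by Lemma \ref{lemma on the integral and summation in a ball}. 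Projecting the resulting density to the base $\Sph^m\cong P\backslash G$ yields the round measure $\mu_{P.\SO(m+1,\R)}$, and reassembling the fibers recovers $\tilde\nu_{x_0}$ from \eqref{eq:def of nu tilde x0}.

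For input (b), the identity component $H^0$ contains the full horospherical subgroup $\{(I,v):v\in\R^m\}$ as well as the one-parameter diagonal $t$-flow, so every $H^0$-orbit in $G/\Ga$ is equidistributed with respect to Haar probability by Ratner's theorem (cf.\ Proposition~1.5 of \cite{Dani1980OrbitsOE}). To upgrade pointwise equidistribution of $H^0$-averages to equidistribution of full $B_T(g)$-averages, one decomposes $B_T(g)$ along the $\De$-cosets of $H^0$ in $H$ and applies a dominated convergence argument, the dominating function being provided by the uniform volume asymptotic of step (a).

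The main obstacle I anticipate is the uniformity, simultaneously in $g$ on compact sets and in the tail of the sum over $\De$, of the volume asymptotic in step (a); this is precisely where the infinitely many components of $H$ enter non-trivially and where the general results of \cite{Gorodnik2004DistributionOL, GN14duality} (which assume a connected $H$, or $H$ discrete) do not apply directly. Controlling the $\De$-tail, essentially by comparing the discrete sum to a continuous $\SL(m,\R)$-integral, is the core new technical input required to make the Gorodnik machinery go through and constitutes the main contribution of the proof.
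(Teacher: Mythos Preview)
Your overall architecture matches the paper exactly: both prove Theorem~\ref{equidistribution result on  G mod H} by verifying the hypotheses of the Gorodnik--Weiss duality theorem, namely (a) a uniform volume asymptotic for the skewed balls $H_T[g_1,g_2]$ and (b) an ergodic theorem for $H$-ball averages on $G/\Ga$, and then identifying the limiting density with $\Phi_{x_0}$. Your treatment of step (a), including the identification of the $\De$-tail as the main obstacle, is accurate and aligns with Proposition~\ref{computation for H and V} and Lemmata~\ref{lem:bound on skewed V vol}--\ref{lem:main estimate for large V skewed balls }. The reduction of the $H$-ball ergodic theorem to the $H^0$-ball ergodic theorem via dominated convergence over $\De$ is also exactly what the paper does (proof of Theorem~\ref{The $G$-invariance of limiting measure} from Theorem~\ref{thm:equidisitribution along skewed balls of the connected component}).

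The gap is in your justification of the $H^0$-ball ergodic theorem itself. You write that ``every $H^0$-orbit in $G/\Ga$ is equidistributed with respect to Haar probability by Ratner's theorem,'' but neither Ratner's theorems nor Dani's density result directly yield equidistribution of averages over the \emph{specific} skewed norm balls $V_T[g_1,g_2]$. These balls have a degenerate shape: the integration is essentially supported where $t\to 0$, and on that range the unipotent slices $D_{T,t}$ \emph{shrink}, so the classical Margulis thickening trick fails (see the Remark after Theorem~\ref{thm:equidisitribution along skewed balls of the connected component}). The paper instead proceeds by (i) proving $U$-invariance of any weak-$*$ limit directly (Proposition~\ref{unipotent invariance of haar measure}), and then (ii) applying Ratner's measure classification together with the Dani--Margulis--Shah linearization technique. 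Step (ii) requires a genuinely new technical input, Lemma~\ref{our lemma expansion inequality for applying shah dichotomy}: an expansion property of $a_t^{-1}u_v^{-1}$ in representations of $G$, proved by decomposing the element into $m$ factors each lying in a copy of $\SL(2,\R)$ and controlling each factor on the shrinking domain $v\in\text{B}(t^{(1-\chi)/m})$. This lemma is what rules out the first alternative in Shah's dichotomy (Theorems~\ref{Shah dichotomy theorem} and \ref{second dichotomy theorem of Shah}) and hence gives both non-escape of mass and zero mass on the singular set. Your proposal does not anticipate this step, and without it the argument for (b) is incomplete.
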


\subsection{Main motivating example - unimodular $m$-lattices in $(m+1)$-space}
The main motivation behind this project was to study the limiting distribution of orbits of a lattice $\Ga\leq G=\SL(m+1,\R)$ acting on a moduli space of discrete groups. Such a research direction was originally suggested by U. Shapira as natural continuation of the work \cite{Sargent2017DynamicsOT}  which considers random walks on the space of rank 2 discrete subgroups in $\R^3$ (see also the more recent work in this direction \cite{gorodnik2022stationary} which generalizes \cite{Sargent2017DynamicsOT}). We now describe our result in this setting, which is a particular case of Theorem \ref{equidistribution result on  G mod H} with $H$ as in \eqref{eq: def of general H} such that $\De=\SL(m,\Z)$.
% We now describe in geometric terms a homogeneous space $H\backslash \SL(m+1,\R)$ where $H$ is as in \eqref{eq: def of general H} with $\De=\SL(m,\Z)$. belonging  with In this paper we study the limiting distribution of dense orbits of a lattice $\Ga\leq\SL(m+1,\R)$ in the space $X_{m,m+1}$ of normalized $m$-dimensional discrete subgroups of $\R^{m+1}$ with respect to a filtration given by growing norm balls (see precise definitions below).

In what follows, $m$ is a natural number strictly larger than 1.
%We note that the space $X_{m,m+1}$ is a "hybrid" of the more familiar space $X_m:=\SL(m,\R)/\Delta$ of unimodular lattices in $\R^m$  with the grassmanian of hyperplanes in $\R^{m+1}$. The space of lattices $X_m$ plays a fundamental role in many of the applications of homogeneous dynamics, and therefore we find it interesting to consider the moduli space $X_{m,m+1}$. 
We say that $\La\subset\R^{m+1}$ is an $m$-lattice if $\La$ is the $\Z$-Span of a tuple of linearly independent vectors  ${v_1,v_2,...,v_m}\in\R^{m+1}$, that is, $$\Lambda:=\text{Span}_{\Z}\{v_1,v_2,...,v_m\}.$$ We define the covolume function:$$\cov(\Lambda):=\sqrt{\det(\langle v_i,v_j\rangle)},$$
which is the area of a fundamental parallelogram of $\Lambda$. 
We call an $m$-lattice $\La$  \textit{unimodular} if $\cov(\La)=1$, and we consider the moduli space of unimodular $m$-lattices in $(m+1)$-space with a marked orthogonal vector:
$$X_{m,m+1}:=\{(\Lambda,w):\cov(\La)=1,w\in\mathbb{S}^m,w\perp\La\}.$$
\begin{remark}
    Compared to the space of (non-marked) unimodular $m$-lattices $$L_{m,m+1}:=\{\La:\cov(\La)=1\},$$the space $X_{m,m+1}$ adds only slightly more information -- for each $\La$ there are only two choices of an orthogonal vector $w\in\Sph^m$. All the results below imply directly the analogous results of $L_{m,m+1}$.
\end{remark}
We define a right $\SL(m+1,\R)$ action on $X_{m,m+1}$ by\begin{equation} \label{action of SL on X_m,3} 
   (\La,w). g:=\left(\frac{\La g}{\sqrt{\cov(\La g)}}, \frac{w (^tg^{-1})}{\|w (^tg^{-1})\|} \right),~g\in \SL(m+1,\R),~w\in \mathbb{S}^m, 
\end{equation}where $\La g$ and $w (^tg^{-1})$ are defined using the usual matrix multiplication, and  where $\|\cdot\|$ is the usual Euclidean norm. This action is transitive, and the stabilizer subgroup of the base point $\left(\text{Span}_{\Z}\{e_1,\cdots,e_m\},e_{m+1}\right)$ is $H$ as in \eqref{eq: def of general H} with $\De=\SL(m,
\Z)$. This gives a natural identification $X_{m,m+1}\cong H\backslash G$ and the corresponding action of $\Ga$ on $ H\backslash G$ is the natural one.
For $$x_0:=(\La_0,w_0)\in X_{m,m+1},$$ consider the probability measures $$\mu_{T,x_0}:=\frac{1}{\#\Ga_T}\sum_{\gamma\in \Ga_T}\delta_{x_0.\gamma},~T>0,$$where $\Ga_T$ is as in \eqref{eq:def of Ga T}.
\begin{remark}
    When $m=1$ the space $X_{m,m+1}$ is  identified with $\Sph^1$ the unit circle in $\R^2$. The limiting distribution of $\mu_{T,x_0}$ in the case of $m=1$ was obtained in \cite{Gorodnik2003LatticeAO}, 
\end{remark}
Having the identification $X_{m,m+1}\cong H\backslash G$, we obtain via Theorem \ref{equidistribution result on  G mod H} that the limiting measure of the measures $\mu_{T,x_0}$ as $T\to\infty$ is $\tilde{\nu}_{x_0}$ defined in \eqref{eq:def of nu tilde x0}. In the following we interpret this limiting measure in terms of the $m$-lattices. The projection to the sphere $\mathbb{S}^m$ defined by $$\pi_{\perp}(\La,w):=w,$$endows $X_{m,m+1}$ with a fiber-bundle structure over $\Sph^{m}$, where the fibers are isomorphic to $$X_m:=\SL(m,\Z)\backslash\SL(m,\R).$$ 
The fiber $\pi_{\perp}^{-1}(w)$ may be thought of as the collection of unimodular $m$-lattices orthogonal to $w\in \Sph^{m}$. 
We now give an interpretation for the measures on $\nu_{x_0,w}$ on those fibers. We first consider the function $\Phi_{x_0}$ which was defined in \eqref{ef:def of Phi_x_0}. For an operator T from a hyperplane $U\subset \R^{m+1}$ to another hyperplane $V\subset \R^{m+1}$, we define
\begin{equation*}
    \|\text{T}\|^2_{\text{HS}}:=\sum_{i=1}^m\|\text{T}u_i\|^2,
\end{equation*}
where $\{u_1,u_2,...,u_m\}$ is an orthonormal basis of $U$, and where the norm on the right hand side is the usual Euclidean norm on $\R^{m+1}$. We note that this norm is independent of the choice of an orthonormal basis $\{u_1,u_2,...,u_m\}$.
For an ordered tuple  $B=(u_1,u_2,...,u_m)$ of linearly independent vectors in $\R^{m+1}$ we define the linear map $\text{T}_B:\text{Span}_{\R}  \{e_1,e_2,...,e_m\} \to\text{Span}_{\R}\{u_1,u_2,...,u_m\}$, by sending $e_1\mapsto u_1,...,e_m\mapsto u_m$. Now fix unimodular $m$-lattice $\Lambda_0\subset\R^{m+1}$, and let $\mathscr{B}_0$ be an ordered tuple of linearly independent vectors forming a $\Z$-basis for $\La_0 $. Then for $x_0=(\La_0,w_0)$ the function $\Phi_{x_0}$ in \eqref{ef:def of Phi_x_0}  becomes \begin{equation*}
    \Phi_{x_0} (\La,w):=\sum_{\Span_\Z{\mathscr{B}}=\La}\frac{1}{\|\text{T}_{\mathscr{B}}\circ \text{T}_{\mathscr{B}_0}^{-1}\|_{\text{HS}}^{m^2}}.
\end{equation*}
By identifying $\pi_\perp^{-1}(e_{m+1})$ with $\SL(m,\Z)\backslash \SL(m,\R)$, we obtain the $\SL(m,\Z)\backslash \SL(m,\R)$-invariant measure $\mu_{e_{m+1}}$ supported on $\pi_\perp^{-1}(e_{m+1})$ scaled such that the measure $\nu_{x_0,e_{m+1}}$ defined by $$\nu_{x_0,e_{m+1}}(f):=\int_{\pi^{-1}_\perp(e_{m+1})}f(\La,e_{m+1})\Phi_{x_0}(\La,e_{m+1})d\mu_{e_{m+1}}(\La),~f\in C_c(X_{m,m+1}),$$ is a probability measure. Next, for any other $w\in \Sph^{m}$, we choose $\rho_w\in \SO(m+1,\R)$ such that $w=e_{m+1} \rho_w$, and we define,$$\nu_{x_0,w}:=(\rho_w)_*\nu_{x_0,e_{m+1}},$$ which is the push-forward of the right translation by $\rho_w$ via the right action of $\SO(m+1,\R)$ on $X_{m,m+1}$ defined in \eqref{action of SL on X_m,3}. Note that $\nu_{x_0,w}$ is independent of the choice of $\rho_w$. Finally, we define $\tilde\nu_{x_0}$ by $$\tilde\nu_{x_0}(f)=\int_{\mathbb{S}^m}\nu_{x_0,w}(f)d\mu_{\mathbb{S}^m}(w).$$   
%We note that the above action is transitive.

\begin{theorem}\label{equidistribution result on  G mod H in Xm,m+1}
 Let $\Ga\leq\SL(m+1,\R)$ be a lattice and fix ${x_0}\in X_{m,m+1}$. Then, $\mu_{T,{x_0}}$ converges in the weak-* topology to $\tilde\nu_{x_0}$ as $T\to\infty$. In other words,
 for all $f\in C_c(X_{m,m+1})$, we have 
\begin{equation*}
\lim_{T
\to\infty} \frac{1}{\#\Gamma_{T}}\sum_{\gamma\in \Ga_T}f(x_{0}. \gamma)=\int_{X_{m,m+1}}f(x)d \tilde\nu_{x_{0}}(x).
\end{equation*}

\end{theorem}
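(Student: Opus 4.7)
The plan is to deduce this theorem as a direct specialization of Theorem \ref{equidistribution result on G mod H}, by verifying that the identification $X_{m,m+1}\cong H\backslash G$ with $\Delta=\SL(m,\Z)$ transports all the pieces of the abstract limiting measure to their concrete lattice-theoretic counterparts. First I would reconfirm the identification: the action in \eqref{action of SL on X_m,3} is transitive, and a direct computation shows that the stabilizer of $(\Span_{\Z}\{e_1,\dots,e_m\},e_{m+1})$ consists exactly of matrices whose last column is $(0,\dots,0,t)^{t}$ with $t>0$, whose bottom-right block is $t$ and whose upper-left $m\times m$ block is $t^{-1/m}q$ with $q\in\SL(m,\Z)$ (the positivity of $t$ is forced by the requirement that the orthogonal marking is preserved up to positive scaling on $\Sph^m$). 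This is precisely $H$ in \eqref{eq: def of general H} with $\Delta=\SL(m,\Z)$.

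Next I would match the fiber bundle $\pi:H\backslash G\to P\backslash G$ with the projection $\pi_{\perp}:X_{m,m+1}\to\Sph^{m}$. The base identification $P\backslash G\cong\Sph^m$ stated in the paper, combined with the fact that $P\rho\mapsto e_{m+1}(^{t}\rho^{-1})$ and the $\SO(m+1,\R)$-equivariance of both projections, shows that $\pi$ and $\pi_{\perp}$ coincide under $X_{m,m+1}\cong H\backslash G$. Moreover the fiber $\pi^{-1}(P)=H\backslash P$ is identified with $\Delta\backslash\SL(m,\R)=X_m$ via $\Delta\eta\mapsto H\begin{bmatrix}\eta&0\\0&1\end{bmatrix}$, and the same map, viewed inside $X_{m,m+1}$, sends $\Delta\eta$ to $(\Span_{\Z}\{e_1,\dots,e_m\}\cdot\eta,\,e_{m+1})$, which is exactly $\pi_{\perp}^{-1}(e_{m+1})$. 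Hence the $\SL(m,\R)$-invariant probability $\mu_{H.\SL(m,\R)}$ pulls back to $\mu_{e_{m+1}}$, and the push-forward construction by $\rho\in\SO(m+1,\R)$ is the same on both sides.

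The main bookkeeping step is to verify that the abstract density $\Phi_{x_0}$ in \eqref{ef:def of Phi_x_0} becomes the lattice-theoretic density $\sum_{\Span_{\Z}\mathscr{B}=\La}\|T_{\mathscr{B}}\circ T_{\mathscr{B}_0}^{-1}\|_{\mathrm{HS}}^{-m^2}$. Writing $x_0=H\begin{bmatrix}g_0&0\\v_0&1/\det g_0\end{bmatrix}\rho_0$ under the identification, the lattice $\La_0$ is the $\Z$-span of the columns of $g_0\rho_0$ restricted to $\Span\{e_1,\dots,e_m\}$ and then rotated by $\rho_0$. The key point is that for any $q\in\SL(m,\Z)$ and $\eta\in\SL(m,\R)$ the matrix $g_0^{-1}q\eta$ is precisely the matrix of the change-of-basis operator $T_{\mathscr{B}}\circ T_{\mathscr{B}_0}^{-1}$ when $\mathscr{B}$ runs through the ordered $\Z$-bases of $\La$ (the set of such bases is parametrized by $q\in\SL(m,\Z)=\Delta$), and since the Hilbert–Schmidt norm is bi-$\SO$-invariant, the rotations $\rho_0,\rho$ drop out. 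This matches the two formulas for $\Phi_{x_0}$ term by term. Once the three ingredients (base measure, fiber measure, density) are matched, $\tilde\nu_{x_0}$ in \eqref{eq:def of nu tilde x0} equals the measure defined via $\nu_{x_0,w}$ and $\mu_{\Sph^m}$, and Theorem \ref{equidistribution result on G mod H} yields the stated convergence. The only nontrivial check is the bookkeeping of the $\Z$-basis parametrization $\mathscr{B}\leftrightarrow q$, which is where I expect routine but careful combinatorics to be needed; everything else is transport of structure under the equivariant homeomorphism $X_{m,m+1}\cong H\backslash G$.
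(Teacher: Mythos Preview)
Your proposal is correct and follows essentially the same route as the paper: the paper does not give a separate proof of Theorem~\ref{equidistribution result on  G mod H in Xm,m+1} but rather states (just before the theorem) that it is a particular case of Theorem~\ref{equidistribution result on  G mod H} with $\Delta=\SL(m,\Z)$, and then interprets the abstract density $\Phi_{x_0}$ of~\eqref{ef:def of Phi_x_0} in lattice-theoretic terms as $\sum_{\Span_\Z\mathscr{B}=\La}\|T_{\mathscr{B}}\circ T_{\mathscr{B}_0}^{-1}\|_{\mathrm{HS}}^{-m^2}$. Your stabilizer computation, fiber-bundle matching, and basis-parametrization bookkeeping are exactly the verifications the paper leaves implicit.
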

\noindent\textbf{An example for $\Ga=\SL(m+1,
\Z)$.} We would like to point out a classical result in number theory that  is related to the orbit $x_0.\Ga$ where $\Ga=\SL(m+1,\Z)$ and $x_0=(\text{Span}_\Z\{e_1,...,e_m\},e_{m+1}) \in X_{m,m+1}$.
The collection of $m$-lattices $$\Z_{m,m+1}:=\left\{\text{Span}_\Z\{e_1,...,e_m\}\gamma:\gamma\in\SL(m+1,\Z)\right\},T>0$$ consists of all primitive (maximal) rank $m$-discrete subgroups of $\Z^{m+1}$. Notice that here the $m$-lattices $\text{Span}_\Z\{e_1,...,e_m\}\gamma$ are without the covolume normalization, and their covolume can be arbitrarily large. It's natural to consider the equivalence relation saying that $m$-lattices are equivalent if they differ by a rotation or by a dilation.  If $\La\leq \R^{m+1}$ is an $m$-lattice, then it's equivelence class $[\La]$ can be naturally represented by a point in $ \text{PGL}(m,\Z)\backslash \text{PGL}(m,\R)/\text{PO}(m,\R)$, where $\text{PGL}$ denotes the projective linear group and where $\text{PO}$ denotes the projective orthogonal group. A particular consequence of a classical result of Schmidt \cite{Schmidt98}, who generalized the works of Roelcke \cite{Roelcke56} and Maass \cite{Maass59}, states that the uniform probability measures on $$\left\{[\La]:\La\in \Z_{m,m+1}\text{ and Cov}(\La)\leq T\right\}$$ converge to the natural measure on $\text{PGL}(m,\Z)\backslash \text{PGL}(m,\R)/\text{PO}(m,\R)$ as $T\to\infty$ 

Using our Theorem \ref{equidistribution result on  G mod H in Xm,m+1}, we can deduce the limiting measure of the following different sequence of probability measures on $\Z_{m,m+1}$ given by $\nu_T:=\frac{1}{\#\Ga_T}\sum_{\gamma\in\Ga_T}\delta_{[\La_0\gamma]},$ where $\La_0=\text{Span}_\Z\{e_1,...,e_m\}.$ To obtain the limiting measure, we note that the equivalence relation modulo rotations and dilations defines a continuous map $[\cdot]:X_{m,m+1}\to \text{PGL}(m,\Z)\backslash \text{PGL}(m,\R)/\text{PO}(m,\R)$. Pushing forward the  measure $\tilde{\nu}_{x_0}$ from Theorem  \ref{equidistribution result on  G mod H in Xm,m+1} via this map gives the limiting measure of $\nu_T$, as $T\to\infty.$

\subsection{Paper organization}
As was mentioned above, to prove our main result we verify the conditions of a theorem due to Gorodnik and Weiss, see Section 2.5 in \cite{Gorodnik2004DistributionOL}. Those conditions are certain volume estimate for the left Haar measure of "skew"-norm balls in $H$, and ergodic theorems of those $H$-balls acting on $G/\Ga$. The organization of the paper is as follows: Section \ref{sec:Estimate of the Haar measure growth of skewed balls in H} states and proves the aforementioned volume estimates, and Section \ref{sec:proof of equid} states and proves the aforementioned ergodic theorems. Finally, Section \ref{sec:finializing the proof using GW} applies the theorem of Gorodnik-Weiss.

\section*{Notations and conventions}
Throughout this paper, for functions $f$ and $g$ we say that as $x\to \al$ where $\al\in \R\cup\{\infty\}$ that $f(x)=O(g(x))$ or $f(x)\ll g(x)$ if there is some $C>0$ such that $|f(x)|\le C |g(x)|$ for all $x$ sufficiently close to $\al$; by $f(x)\asymp g(x)$ we mean $f(x)\ll g(x)$ and $g(x) \ll f(x)$; by $f(x)\sim g(x)$ we mean $\lim_{x\to \al}\left|\frac{f(x)}{g(x)} \right|=L$ for $L\neq0$; by $f(x)=o(g(x))$ we mean $\lim_{x\to\al}f(x)/g(x)=0.$

\section{Volume estimates of expanding skew balls in $H$}\label{sec:Estimate of the Haar measure growth of skewed balls in H}

We begin by describing the left invariant measure on $H$. We consider
\begin{equation}\label{eq:def of the subgroups UADelta}
    U:=\begin{bmatrix}
I_m & 0 \\
\mathbb{\R}^m & 1 
\end{bmatrix}, A=\left \{\begin{bmatrix}
t^{-\frac{1}{m}}I_m & 0 \\
0 & t 
\end{bmatrix}:t>0 \right\},\tilde{\De}:=\begin{bmatrix}
\De & 0 \\
0 &  1 
\end{bmatrix},
\end{equation}
where $\De\leq\SL(m,\R)$ is a lattice, and we denote

\begin{equation}\label{eq:definition of u_v a_t and qtilde}
u_v:=\begin{bmatrix}
I_m & 0 \\
v & 1 
\end{bmatrix},      
a_t:=\begin{bmatrix}
t^{-\frac{1}{m}}I_m & 0 \\
0 & t 
\end{bmatrix}, 
\tilde{q}:=\begin{bmatrix}
q & 0 \\
0 &  1 
\end{bmatrix},  
\end{equation}
for $q\in \Delta,t>0, v \in \R^m$. Any element in $x\in H$ can be represented as $x=u_v a_t\tilde{q} $  uniquely in $v\in\R^{m},t>0,q\in \De$, and we have $$(u_{v_1}a_{t_1}\tilde{q}_1)(u_{v_2}a_{t_2}\tilde{q}_2)=\left(u_{v_1+t_1^{\frac{m+1}{m}}v_2q_1^{-1}}\right)a_{t_1t_2}\left(\tilde{q}_1\tilde{q}_2\right),~v_i\in\R^{m},t_i>0,q_i\in\De.$$
Namely, $H$ is the semi-direct product
$$H\cong \R^{m}\rtimes(\R_{>0} \times \Delta),$$
%Note that clearly (as a product of closed sets), $H$ is a closed subgroup of $G$. Hence it follows from the Theorem 3.58 in \cite{Wa83} that we have the following locally smooth section property:for any $x\in H\backslash G$, there exists a neighborhood $x \in W \subset H\backslash G$ and a smooth map $\sigma:W \to G$ such that 
%\begin{equation*}
%    \pi \circ \sigma = \text{id}.
%\end{equation*}
and it follows that a left Haar measure on $H$ is given by
\begin{align}\label{eq:Haar measure on $H$}
    \int_H f(x)d\mu(x)
    =&\sum_{q\in \Delta}\int_0^{\infty}\int_{\R^m} f\left(u_v a_t\tilde{q}\right) dv \frac{1}{t^{m+2}} dt, 
\end{align}
where the measures $dv$  and $dt$ denote the Lebesgue measures on $\R^m$ and $\R$ correspondingly. 

Throughout this paper, $\|M\|$ for a matrix $M$ denotes it's Hilbert-Schmidt norm.
Following \cite{Gorodnik2004DistributionOL}, for a subgroup $L\leq G$,  $g_1,g_2 \in G$ and $T>0$, we define the so-called ``skewed balls" as follows (defined slightly differently from the definition in \cite{Gorodnik2004DistributionOL}):
\begin{equation*}
    L_T[g_1,g_2]:=\{h\in L: \|g_1^{-1}hg_2\|<T \},
\end{equation*}
and we denote 
\begin{equation}\label{meaning of the subscript T}
    L_T:=L_T[e,e].
\end{equation}
For $g_1, g_2 \in G$, by using Iwasawa decomposition, we denote $k_1,k_2 \in \SO(m+1,\R)$, $\text{g}_1,\text{g}_2\in \GL(m,\R)$, and $\text{v}_1, \text{v}_2\in \R^m$ by
\begin{equation}\label{matrix representation of g1 and g2}
g_1^{-1}=:k_1\begin{bmatrix} \text{g}_1 & 0 \\\text{v}_1 & \det(\text{g}_1)^{-1} \end{bmatrix}, ~g_2=:\begin{bmatrix} \text{g}_2 & 0 \\\text{v}_2 & \det(\text{g}_2)^{-1} \end{bmatrix} k_2.
\end{equation}
We note that the above $k_i,\text{g}_j$, are not unique, but due to $\SO(m+1,\R)$-invariance, all  their appearances below are well defined regardless of their choices.

The key volume estimates we prove in this section are given in the following proposition.
\begin{proposition}\label{computation for H and V} Let $\Gamma(z)$ be the classical Gamma function, and let $$C(m,g_1,g_2):=\frac{m\pi^\frac{m}{2}\Ga(\frac{m^2}{2})}{2\Ga(\frac{m^2}{2}+\frac{m}{2}+1)}\left|\frac{\det(\emph{\text{g}}_1)^m}{ \det(\emph{\text{g}}_2)}\right|.$$There exists $\kappa>0$ such that for a fixed bounded subset $B\subseteq \SL(m+1,\R)$ it holds for all $g_1,g_2\in B$ that    \begin{equation}\label{eq:the estimate for H skew balls}
    \mu(H_T[g_1,g_2])=\left(C(m,g_1,g_2)\sum_{q\in \Delta}\frac{1}{\|\emph{\text{g}}_1q\emph{\text{g}}_2\|^{m^2}}\right)T^{m(m+1)}+O(T^{m(m+1)-\kappa}),  
   \end{equation}
as $T\to\infty$. The implied constant depends on $B$ only.
\end{proposition}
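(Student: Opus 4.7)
The plan is to evaluate the Haar integral defining $\mu(H_T[g_1,g_2])$ in the coordinates $(v,t,q)$, integrating first in $v$, then in $t$, and finally summing over $q\in\De$. By bi-$\SO(m+1,\R)$-invariance of $\|\cdot\|$ and the Iwasawa decomposition \eqref{matrix representation of g1 and g2}, I may replace $g_1^{-1}$ and $g_2$ by their upper-triangular Iwasawa parts without affecting $\|g_1^{-1}hg_2\|$. For $h=u_va_t\tilde q$, a direct block-matrix computation gives
\[
\|g_1^{-1}hg_2\|^2= t^{-2/m}\|\text{g}_1q\text{g}_2\|^2 + \|t^{-1/m}\det(\text{g}_1)^{-1}vq\text{g}_2+c(t,q)\|^2 + (\det(\text{g}_1)\det(\text{g}_2))^{-2}t^2,
\]
where $c(t,q)\in\R^m$ is independent of $v$. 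Writing $C_q:=\|\text{g}_1q\text{g}_2\|^2$ and $D:=\det(\text{g}_1)\det(\text{g}_2)$, the set of $v$ with $\|g_1^{-1}hg_2\|<T$ is a Euclidean ball in $\R^m$; after the substitution $w=vq\text{g}_2$ (Jacobian $|\det\text{g}_2|$), its volume equals $\omega_m|\det\text{g}_2|^{-1}\,t\det(\text{g}_1)^m(T^2-t^{-2/m}C_q-D^{-2}t^2)_+^{m/2}$, where $\omega_m:=\pi^{m/2}/\Gamma(m/2+1)$.

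Substituting first $s=t^{-1/m}$ and then $s=Tu$ in the resulting $t$-integral extracts the leading power $T^{m(m+1)}$ and produces, for each $q\in\De$,
\[
I_q(T)=m\,\omega_m\,\tfrac{\det(\text{g}_1)^m}{|\det\text{g}_2|}\,T^{m(m+1)}J_q(T),\quad J_q(T):=\int_0^\infty\bigl(1-u^2C_q-D^{-2}T^{-2(m+1)}u^{-2m}\bigr)_+^{m/2}u^{m^2-1}\,du.
\]
Letting $T\to\infty$ pointwise inside the integrand yields $J_q(\infty)=\int_0^{1/\sqrt{C_q}}(1-u^2C_q)^{m/2}u^{m^2-1}du$, and the substitution $u=\sin\theta/\sqrt{C_q}$ converts this into a standard Beta integral equal to $\frac{\Gamma(m^2/2)\Gamma(m/2+1)}{2C_q^{m^2/2}\Gamma(m^2/2+m/2+1)}$. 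Inserting this into $I_q(T)$ recovers exactly the expected coefficient $C(m,g_1,g_2)/\|\text{g}_1q\text{g}_2\|^{m^2}$ in front of $T^{m(m+1)}$.

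The remainder $J_q(T)-J_q(\infty)$ is supported on $u\leq u_0:=D^{-1/m}T^{-(m+1)/m}$ (where the correction $D^{-2}T^{-2(m+1)}u^{-2m}$ exceeds $1$), plus a first-order perturbation on $u>u_0$. Bounding the first region trivially and applying $|a^{m/2}-b^{m/2}|\leq\tfrac{m}{2}|a-b|$ for $a,b\in[0,1]$ (valid when $m\geq 2$) on the second, one obtains a per-$q$ error of size $O(T^{-2(m+1)})$ (with an additional $\log T$ when $m=2$), uniformly for $q$ with $C_q$ bounded. To control the full sum over $\De$, I truncate at $\|q\|\leq M:=T^\beta$ for a small $\beta>0$: the tail uses the crude bound $I_q(T)\ll T^{m(m+1)}C_q^{-m^2/2}$ together with the standard lattice-point count $\#\{q\in\De:\|q\|\leq R\}\ll R^{m^2-m}$, giving $\sum_{\|q\|>M}I_q(T)\ll T^{m(m+1)}M^{-m}$; meanwhile the bulk $\|q\|\leq M$ contributes an aggregate error $\ll T^{m(m+1)-2(m+1)}M^{m^2-m}$. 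Choosing $\beta$ to balance these two sources gives the stated $O(T^{m(m+1)-\kappa})$ for some explicit $\kappa>0$. Uniformity in $g_1,g_2\in B$ is automatic since every implicit constant depends only on the bounds for $|\det\text{g}_i|$, $\|\text{v}_i\|$ and $\|\text{g}_i\|$ on $B$; the main obstacle is precisely this double-truncation argument and verifying that all bounds on $J_q(T)$ and $J_q(\infty)$ are uniform in $q$.
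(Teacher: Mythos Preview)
Your proposal is correct and follows essentially the same strategy as the paper: integrate in $v$ to reduce to a one-dimensional integral, identify the Beta integral as the leading term, and truncate the sum over $\De$ at a power of $T$ using the lattice-point count (the paper's Lemma~\ref{lemma on the integral and summation in a ball}). The only organizational difference is that the paper keeps the variable $t$ and splits the $t$-integral into three pieces $(\alpha_{q,T},\alpha_\delta)$, $(\alpha_\delta,\lambda)$, $(\lambda,\beta_{q,T})$ with auxiliary parameters $\epsilon_1,\epsilon_2$ (Lemmata~\ref{lemma for integral from a to a delta}--\ref{lem:integral from c to b}), whereas your substitution $u=t^{-1/m}/T$ packages the same analysis as the single difference $J_q(T)-J_q(\infty)$ split at $u_0$; your route is somewhat cleaner. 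One wording slip: your per-$q$ error bound $O(T^{-2(m+1)})$ in fact holds uniformly for all $q$ (since $C_q$ is bounded \emph{below} on $B$ and the Region~2 bound $\epsilon\,C_q^{-(m^2-2m)/2}$ is decreasing in $C_q$ for $m\geq3$), not just for ``$C_q$ bounded''; this is exactly what you need when summing over $\|q\|\leq M$.
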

We note that  the sum $\sum_{q\in \Delta}\frac{1}{\|\text{g}_1q\text{g}_2\|^{m^2}}$ is convergent due to Lemma \ref{lemma on the integral and summation in a ball} below.
\begin{remark}
    Statements of this form are known in great generality for a connected $H$, see e.g. Theorems 7.17–7.18 in \cite{GN_book_ergodic_theory}. Here, since our $H$ has infinitely many connected components, the volume of the skew-ball is a sum over the volumes of skew-balls of the connected component, see \eqref{eq:haar measure of skewed H ball as a sum of skewed balls on connected component} below.  Our main challenge here is to quantify the dependence of the error term of the main term appearing of the volume of each skew-ball of the connected component, see Lemma \ref{lem:main estimate for large V skewed balls } below. Proving such a result for more general norms is the main obstacle for  generalizing our results for more general norms.
\end{remark}
As an immediate corollary, we  obtain the following statements which are key requirements for the method of \cite{Gorodnik2004DistributionOL}.
\begin{corollary}
\label{Cor:D1}    \emph{(Uniform volume growth for skewed balls in $H$, property D1 in \cite{Gorodnik2004DistributionOL})}
For any bounded subset $B\subset G$ and any $\e>0$, there are $T_0$ and $\de>0$ such that for all $T>T_0$ and all $g_1,g_2 \in B$ we have:
\begin{equation*}
    \mu \left(H_{(1+\de)T}[g_1,g_2] \right)\le (1+\e)\mu \left(H_T[g_1,g_2] \right).
\end{equation*}
\end{corollary}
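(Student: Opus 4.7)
The plan is to deduce the corollary directly from the asymptotic expansion in Proposition \ref{computation for H and V}, once we verify that the leading coefficient
\[
A(g_1,g_2):=C(m,g_1,g_2)\sum_{q\in\Delta}\frac{1}{\|\text{g}_1 q\text{g}_2\|^{m^2}}
\]
is bounded away from $0$ and $\infty$ uniformly in $(g_1,g_2)\in B\times B$. Once this is established, the comparison of $\mu(H_{(1+\delta)T}[g_1,g_2])$ with $\mu(H_T[g_1,g_2])$ becomes a routine division of two asymptotics of the same order.

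The first step is the uniform boundedness of $A$. Recall that $\text{g}_1$ and $\text{g}_2$ arise from the Iwasawa decomposition \eqref{matrix representation of g1 and g2} of $g_1^{-1}$ and $g_2$, so they are continuous functions of $g_1,g_2$ into $\GL(m,\R)$. On the bounded set $B\subset\SL(m+1,\R)$ this implies that $|\det(\text{g}_1)|$ and $|\det(\text{g}_2)|$ are bounded above and below by positive constants, hence $C(m,g_1,g_2)\asymp 1$ uniformly on $B\times B$. For the series, the upper bound follows from the convergence argument cited from the paper's Lemma on sums and integrals in a ball (applied uniformly on a compact), while the lower bound is obtained by dropping all but the $q=I_m$ term, giving $\sum_q \|\text{g}_1 q \text{g}_2\|^{-m^2}\geq \|\text{g}_1\text{g}_2\|^{-m^2}\gtrsim 1$ on $B\times B$ since $\|\text{g}_1\text{g}_2\|$ is uniformly bounded above. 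Therefore there exist $0<c_1\le c_2$ with $c_1\le A(g_1,g_2)\le c_2$ for all $g_1,g_2\in B$.

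The second step is to insert the expansion from Proposition \ref{computation for H and V}. Writing $d:=m(m+1)$, we have
\[
\mu(H_{T'}[g_1,g_2])=A(g_1,g_2)(T')^{d}+R(T',g_1,g_2),\quad |R(T',g_1,g_2)|\le M\,(T')^{d-\kappa},
\]
with $M$ depending only on $B$. Taking $T'=(1+\delta)T$ and $T'=T$ and using $A\ge c_1$, we obtain
\[
\frac{\mu(H_{(1+\delta)T}[g_1,g_2])}{\mu(H_T[g_1,g_2])}
=\frac{A(g_1,g_2)(1+\delta)^{d}T^{d}+O(T^{d-\kappa})}{A(g_1,g_2)\,T^{d}+O(T^{d-\kappa})}
\le (1+\delta)^{d}\cdot\frac{1+C_1T^{-\kappa}}{1-C_1T^{-\kappa}},
\]
where $C_1=M/c_1$ is independent of $g_1,g_2\in B$ and $\delta\in[0,1]$.

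The final step is the choice of parameters. Given $\varepsilon>0$, choose $\delta>0$ small enough so that $(1+\delta)^{d}\le 1+\varepsilon/2$, and then choose $T_0$ large enough so that $(1+C_1T^{-\kappa})/(1-C_1T^{-\kappa})\le (1+\varepsilon)/(1+\varepsilon/2)$ for all $T>T_0$. Since both $\delta$ and $T_0$ depend only on $\varepsilon$ and on the constants $c_1,M$ attached to $B$, the resulting inequality $\mu(H_{(1+\delta)T}[g_1,g_2])\le (1+\varepsilon)\mu(H_T[g_1,g_2])$ is uniform over $g_1,g_2\in B$. The only nontrivial point is the uniform positive lower bound on $A(g_1,g_2)$, which is the only place where the fact that $\Delta$ is a lattice (and hence contains $I_m$ or at least a continuously varying element whose contribution stays bounded) is used; everything else is a mechanical use of Proposition \ref{computation for H and V}.
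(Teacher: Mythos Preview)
Your proposal is correct and follows the same approach as the paper: both deduce the corollary directly from the uniform asymptotic $\mu(H_T[g_1,g_2])\sim A(g_1,g_2)\,T^{m(m+1)}$ provided by Proposition~\ref{computation for H and V}. The paper's own justification is the one-sentence remark immediately following the corollary, and your write-up simply spells out the details---the uniform two-sided bounds on $A(g_1,g_2)$ and the routine ratio estimate---that the paper leaves implicit. One cosmetic remark: the Iwasawa factors $\text{g}_i$ in~\eqref{matrix representation of g1 and g2} are not unique, so it is cleaner to say that the well-defined quantities $|\det(\text{g}_i)|$ and $\|\text{g}_1 q\text{g}_2\|$ are continuous in $(g_1,g_2)$ (and hence bounded on~$B$) rather than that the $\text{g}_i$ themselves are; this does not affect the argument.
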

Corollary \ref{Cor:D1} follows since Proposition \ref{computation for H and V} states that $$\mu\left(H_T[g_1,g_2]\right)\sim (\text{constant depending on $g_1,g_2$})\times T^{m(m+1)},$$ and importantly, this estimate is \emph{uniform} when $g_1,g_2$ are restricted to a compact set. 
Next we have:
\begin{corollary}[Limit volume ratios, property D2 in \cite{Gorodnik2004DistributionOL}]
For any $g_1,g_2 \in G$. the limit 
\begin{equation}\label{eq:alpha}
    \omega(g_1,g_2):=\lim_{T\to \infty}\frac{\mu \left(H_T[g_1,g_2] \right)}{\mu \left(H_T \right)}=\left|\frac{\det(\emph{\text{g}}_1)^m}{ \det(\emph{\text{g}}_2)}\right|\frac{\sum_{q\in \Delta}\frac{1}{\|\emph{\text{g}}_1q\emph{\text{g}}_2\|^{m^2}}}{\sum_{q\in \Delta}\frac{1}{\|q\|^{m^2}}},
\end{equation}
exists and is positive and finite. 
\end{corollary}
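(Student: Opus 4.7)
The proof is essentially a direct application of Proposition \ref{computation for H and V} to both the numerator and the denominator of the ratio. First I would apply the proposition with the pair $(g_1,g_2)$ to get
\begin{equation*}
\mu(H_T[g_1,g_2]) = C(m,g_1,g_2)\left(\sum_{q\in\Delta}\frac{1}{\|\text{g}_1 q \text{g}_2\|^{m^2}}\right)T^{m(m+1)} + O(T^{m(m+1)-\kappa}).
\end{equation*}
Next I would apply the proposition with the pair $(e,e)$ to compute the denominator. Since $e^{-1}=e$, the Iwasawa decomposition in \eqref{matrix representation of g1 and g2} forces $k_1=k_2=I$, $\text{g}_1=\text{g}_2=I$, and $\text{v}_1=\text{v}_2=0$, so $C(m,e,e)=\frac{m\pi^{m/2}\Gamma(m^2/2)}{2\Gamma(m^2/2+m/2+1)}$ and
\begin{equation*}
\mu(H_T) = \mu(H_T[e,e]) = C(m,e,e)\left(\sum_{q\in\Delta}\frac{1}{\|q\|^{m^2}}\right)T^{m(m+1)} + O(T^{m(m+1)-\kappa}).
\end{equation*}

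Before taking the ratio, I would observe that both series $\sum_{q\in\Delta}\|q\|^{-m^2}$ and $\sum_{q\in\Delta}\|\text{g}_1 q \text{g}_2\|^{-m^2}$ are finite and strictly positive: finiteness is exactly the statement flagged after Proposition \ref{computation for H and V} (via Lemma \ref{lemma on the integral and summation in a ball}), and positivity is immediate since every term is positive and $\Delta$ contains the identity. In particular, the leading coefficient of $\mu(H_T)$ is a positive constant, so dividing the two asymptotic expansions is legitimate.

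Finally, dividing and letting $T\to\infty$, the error terms $O(T^{m(m+1)-\kappa})$ become $O(T^{-\kappa})$ after normalization and therefore vanish, yielding
\begin{equation*}
\lim_{T\to\infty}\frac{\mu(H_T[g_1,g_2])}{\mu(H_T)} = \frac{C(m,g_1,g_2)}{C(m,e,e)}\cdot\frac{\sum_{q\in\Delta}\|\text{g}_1 q \text{g}_2\|^{-m^2}}{\sum_{q\in\Delta}\|q\|^{-m^2}}.
\end{equation*}
Since the Gamma-factor cancels, $C(m,g_1,g_2)/C(m,e,e)=\left|\det(\text{g}_1)^m/\det(\text{g}_2)\right|$, giving exactly \eqref{eq:alpha}. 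The expression is positive and finite by the observations above on the two series and the fact that $\text{g}_1,\text{g}_2\in\GL(m,\R)$. There is no real obstacle here since all the work has been placed into Proposition \ref{computation for H and V}; the corollary is a clean consequence of the uniformity of the error term in that proposition.
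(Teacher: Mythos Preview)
Your proposal is correct and follows exactly the approach the paper intends: the corollary is stated without a separate proof because it is an immediate consequence of dividing the asymptotic in Proposition~\ref{computation for H and V} for $(g_1,g_2)$ by the same asymptotic for $(e,e)$. Your write-up simply makes explicit the details (identifying $\text{g}_1=\text{g}_2=I_m$ in the denominator, convergence and positivity of the two series, and the cancellation of the Gamma-factor) that the paper leaves to the reader.
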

The importance of the explicit expression \eqref{eq:alpha} is that it allows  to deduce, via Thoerem 2.3 of \cite{Gorodnik2004DistributionOL}, the explicit limiting distribution of Theorem \ref{equidistribution result on  G mod H}.

We now detail the structure of the proof of Proposition \ref{computation for H and V}. Consider for $q\in\De$
\begin{equation}\label{definition of V q T}
    V_{q,T}[g_1,g_2]:=\{h\in U A \tilde{q}: \|g_1^{-1}hg_2\|<T \}.
\end{equation}
Since $H=\bigsqcup_{q \in \Delta}UA\tilde{q}$, we have 
\begin{equation}\label{decomposition of H_T into V_T}
    H_T[g_1,g_2]=\bigsqcup_{q \in \Delta}V_{q,T}[g_1,g_2],
\end{equation}
and in particular
\begin{align}\label{eq:haar measure of skewed H ball as a sum of skewed balls on connected component}
    \mu \left(H_T[g_1,g_2] \right)
    =& \sum_{\substack{q\in \Delta}} \mu \left(V_{q,T}[g_1,g_2]\right).
\end{align}
Now denote $V:=UA$,  (the connected component of $H$) and observe that $V_{q,T}[g_1,g_2]$ is the following translated skewed ball of the group $V$:
$$V_{q,T}[g_1,g_2]=V_T[g_1,\tilde{q}g_2]\tilde{q}.$$
More explicitly, note that 
$$V_T[g_1,\tilde{q}g_2]=\{u_va_t:t>0,v\in\R^m\text{ and }\|(g_1^{-1})a_tu_v(\tilde{q}g_2)\|\leq T\},$$
and observe that by  \eqref{eq:Haar measure on $H$},  $$\mu\left(V_{q,T}[g_1,g_2]\right)=\iint\limits_{V_T[g_1,\tilde{q}g_2]}  dv \frac{1}{t^{m+2}} dt.$$When the norm of $q$ is large we have that  $\mu \left(V_{q,T}[g_1,g_2]\right)$, is small (see Lemma \ref{lem:roots lemma}), and the summation over such terms in \eqref{eq:haar measure of skewed H ball as a sum of skewed balls on connected component} doesn't contribute to the main term of \eqref{eq:the estimate for H skew balls}. In fact, we partition the sum into two parts
$$ \left(\sum_{\substack{q\in \Delta,\\ \|\text{g}_1 q \text{g}_2\|< \sqrt{T}}}+\sum_{\substack{q\in \Delta,\\ \sqrt{T}\leq\|\text{g}_1 q \text{g}_2\|}} \right)\mu\left(V_{q,T}[g_1,g_2]\right),$$
and we prove the following lemma.
\begin{lemma} \label{lem:bound on skewed V vol}
      If $g_1,g_2$ vary in a bounded set $B$, then
      \emph{$$\sum_{\substack{q\in \Delta,\\ \sqrt{T}\leq\|\text{g}_1 q \text{g}_2\|}} \mu\left(V_{q,T}[g_1,g_2]\right)=O(T^{m(m+1)-m/2}),$$} where the implied constant depends on $B$ only.
\end{lemma}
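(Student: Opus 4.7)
The plan is to reduce to a pointwise bound on each summand $\mu(V_{q,T}[g_1,g_2])$ of the shape
\begin{equation*}
\mu(V_{q,T}[g_1,g_2]) \ll \frac{T^{m(m+1)}}{\|\text{g}_1 q \text{g}_2\|^{m^2}},
\end{equation*}
uniform in $q\in\Delta$ and $g_1,g_2\in B$, and then sum the resulting tail. Such a bound is presumably the content of Lemma~\ref{lem:roots lemma}, but I will sketch how to obtain it directly from the Iwasawa form \eqref{matrix representation of g1 and g2}. By left-invariance of $\mu|_V$ and the identity $V_{q,T}[g_1,g_2] = V_T[g_1,\tilde q g_2]\,\tilde q$, we have $\mu(V_{q,T}[g_1,g_2])=\mu(V_T[g_1,\tilde q g_2])$, and by \eqref{eq:Haar measure on $H$} this equals
\begin{equation*}
\int_0^\infty\!\int_{\R^m}\mathbbm{1}\bigl\{\|g_1^{-1}u_va_t\tilde q g_2\|\le T\bigr\}\,dv\,\frac{dt}{t^{m+2}}.
\end{equation*}

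Using $\SO(m+1,\R)$-invariance of the Hilbert-Schmidt norm, we discard the rotations $k_1,k_2$ in \eqref{matrix representation of g1 and g2} and multiply the block matrices explicitly. The upper-left $m\times m$ block of $g_1^{-1}u_v a_t \tilde q g_2$ equals $t^{-1/m}\text{g}_1 q\text{g}_2$, the bottom-right entry equals $t\det(\text{g}_1)^{-1}\det(\text{g}_2)^{-1}$, and the bottom row is an affine function of $v$ whose linear part is $t^{-1/m}\det(\text{g}_1)^{-1}v\,\text{g}_2$. The three constraints that each block have norm at most $T$ force respectively
\begin{equation*}
t\ge\bigl(\|\text{g}_1 q\text{g}_2\|/T\bigr)^m,\qquad t\le T\,|\det(\text{g}_1)\det(\text{g}_2)|,
\end{equation*}
and, for each such $t$, that $v$ lie in an ellipsoid of Lebesgue volume $O_B(T^m t)$. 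Integrating $v$ first and then $t$ from $(\|\text{g}_1 q \text{g}_2\|/T)^m$ to $O(T)$ against $dt/t^{m+2}$ produces
\begin{equation*}
\mu(V_{q,T}[g_1,g_2]) \ll_B T^m\int_{(\|\text{g}_1 q\text{g}_2\|/T)^m}^{C T}\frac{dt}{t^{m+1}} \ll_B \frac{T^{m(m+1)}}{\|\text{g}_1 q\text{g}_2\|^{m^2}},
\end{equation*}
with the constant depending only on $B$. (Implicit is that the sub-interval is nonempty, otherwise the term is zero and the bound holds trivially.)

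Summing this pointwise bound against the lattice $\Delta$, the task reduces to bounding the tail
\begin{equation*}
S(T):=\sum_{\substack{q\in\Delta\\ \|\text{g}_1 q\text{g}_2\|\ge\sqrt T}}\frac{1}{\|\text{g}_1 q\text{g}_2\|^{m^2}}.
\end{equation*}
By Lemma~\ref{lemma on the integral and summation in a ball}, the counting function $\#\{q\in\Delta:\|\text{g}_1 q\text{g}_2\|\le R\}$ grows like $R^{m^2-m}$ uniformly for $g_1,g_2\in B$, which yields after Abel summation that $S(T)\ll_B T^{-m/2}$. Multiplying by the $T^{m(m+1)}$ from the pointwise bound delivers the claimed $O(T^{m(m+1)-m/2})$, with implied constant depending only on $B$.

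The main obstacle I anticipate is the pointwise bound in the first step: one must be careful that the block-by-block control of the Hilbert--Schmidt norm gives constraints tight enough to recover the correct exponent $m^2$ in the denominator (so that the sum $\sum_q \|\text{g}_1 q \text{g}_2\|^{-m^2}$ is summable and cancels against the counting function of $\Delta$), and that the control is uniform both in the ``large $q$'' regime and as $g_1,g_2$ range over $B$. The tail summation is a standard consequence of the lattice-point count once the pointwise bound is in hand.
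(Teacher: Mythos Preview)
Your proposal is correct and matches the paper's approach: the pointwise bound is precisely Lemma~\ref{lem:estimate of V skewed balls as power of T and fract of A_m to power m^2} (not Lemma~\ref{lem:roots lemma}), and the tail sum is then handled via \eqref{eq:lower bound on A_m} together with Lemma~\ref{lemma on the integral and summation in a ball} applied with $\sigma=-m^2$, which already delivers the $O(T^{-m/2})$ tail directly without a separate Abel summation. One harmless slip: the linear part of the bottom row is $t^{-1/m}\det(\text{g}_1)^{-1}vq\text{g}_2$ (you dropped the $q$), but since $\det q=1$ the ellipsoid volume is unaffected.
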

For the rest of the terms, we have the following key estimate.
\begin{lemma}\label{lem:main estimate for large V skewed balls }
    There exist $\ka_1,\ka_2>0$ which can be small as we wish and explicitly determined, such that whenever $g_1,g_2$ vary in a bounded set $B$, and $q\in\De$ such that \emph{$\|\text{g}_1q\text{g}_2\|\leq \sqrt{T}$}, it holds that
    \begin{equation*}
       \mu\left(V_{q,T}[g_1,g_2]\right)=\left(C(m,g_1,g_2)\frac{1}{\|\emph{\text{g}}_1q\emph{\text{g}}_2\|^{m^2}}\right)T^{m(m+1)}+O\left(\frac{T^{m(m+1)-\ka_1}}{\|q\|^{m^2}}+\frac{T^{m+\ka_2\frac{m^2}{m+1}}}{\|q\|^{\frac{m^2}{m+1}}}\right), 
    \end{equation*}
    where the implied constant depends on $B$ only.
\end{lemma}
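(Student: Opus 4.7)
The plan is to reduce $\mu(V_{q,T}[g_1,g_2])$ to a fully explicit double integral and then perform asymptotic analysis as $T\to\infty$. Using the representations in \eqref{matrix representation of g1 and g2} together with the $\SO(m+1,\R)$-invariance of the Hilbert--Schmidt norm, I would first expand
\begin{equation*}
\|g_1^{-1} u_v a_t \tilde q g_2\|^2 = t^{-2/m}\|\text{g}_1 q \text{g}_2\|^2 + t^2\det(\text{g}_1\text{g}_2)^{-2} + \bigl\|t^{-1/m}(\text{v}_1 + \det(\text{g}_1)^{-1} v)q\text{g}_2 + t\det(\text{g}_1)^{-1}\text{v}_2\bigr\|^2.
\end{equation*}
For fixed $t$, an affine change of variables in $v$ (absorbing $q\text{g}_2$, with Jacobian $|\det(\text{g}_1)|^m/|\det(\text{g}_2)|$ since $\det q=1$) followed by a translation turns the $v$-slice into a Euclidean ball of squared radius $R(t):= T^2 - \alpha t^{-2/m} - \beta t^2$, where $\alpha:=\|\text{g}_1 q \text{g}_2\|^2$ and $\beta:=\det(\text{g}_1\text{g}_2)^{-2}$. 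Combined with the Haar density $t^{-m-2}$ from \eqref{eq:Haar measure on $H$}, this yields
\begin{equation*}
\mu(V_{q,T}[g_1,g_2]) = \omega_m\,\frac{|\det(\text{g}_1)|^m}{|\det(\text{g}_2)|}\int_0^\infty t^{-m-1}R(t)_+^{m/2}\,dt,
\end{equation*}
with $\omega_m = \pi^{m/2}/\Gamma(m/2+1)$.

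Next, I would isolate the leading power of $T$ via the substitution $t = \alpha^{m/2}T^{-m}r$, designed to move the smaller zero of $R(t)$ to $r\approx 1$. This converts the integral into $\alpha^{-m^2/2}T^{m(m+1)}\mathcal{I}(\eta)$ where
\begin{equation*}
\mathcal{I}(\eta):=\int_0^\infty r^{-m-1}\bigl(1 - r^{-2/m} - \eta r^2\bigr)_+^{m/2}\,dr, \qquad \eta := \beta\alpha^m T^{-2(m+1)}.
\end{equation*}
The hypothesis $\|\text{g}_1 q \text{g}_2\|\le\sqrt{T}$ forces $\eta \le \beta T^{-m-2}\to 0$ uniformly for $g_1,g_2\in B$. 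Sending $\eta\to 0$ gives $\mathcal{I}(0) = \int_1^\infty r^{-m-1}(1 - r^{-2/m})^{m/2}\,dr$, which the substitution $u=r^{-2/m}$ evaluates to the Beta integral $\tfrac{m}{2}B(m^2/2, m/2+1)$; combined with the prefactor $\omega_m|\det(\text{g}_1)|^m/|\det(\text{g}_2)|$ this produces exactly the claimed main term $C(m,g_1,g_2)\,T^{m(m+1)}/\|\text{g}_1 q\text{g}_2\|^{m^2}$.

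The remaining task, which I expect to be the main obstacle, is the quantitative bound $|\mathcal{I}(\eta) - \mathcal{I}(0)|\ll \eta^{\theta}$ for some explicit $\theta > 0$. I would split the difference over three regions: the left edge $[1, 1+O(\eta)]$, where $(1-r^{-2/m})^{m/2}=O(\eta^{m/2})$; the bulk $[1+O(\eta), r_+(\eta)]$ with $r_+(\eta)\asymp \eta^{-1/2}$, where a mean-value estimate on $x\mapsto x^{m/2}$ bounds the pointwise difference of integrands by $(m/2)(1-r^{-2/m})^{m/2-1}\eta r^2$; and the right tail $r > r_+(\eta)$, where only $\mathcal{I}(0)$ contributes an amount $\ll r_+^{-m}\asymp \eta^{m/2}$. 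Summing, $|\mathcal{I}(\eta)-\mathcal{I}(0)|\ll \eta$ (with a logarithmic factor absorbed into a slight loss of exponent when $m=2$). Substituting back $\eta=\beta\alpha^m T^{-2(m+1)}$ and using $\alpha\asymp\|q\|^2$ produces an error bound of shape $T^{(m+1)(m-2)}/\|q\|^{m(m-2)}$; distributing this between the two stated summands, with the tunable exponents $\kappa_1, \kappa_2$ arising from the freedom to trade factors of $T^\kappa \|q\|^{-2\kappa}$ against one another under the cutoff $\|\text{g}_1 q\text{g}_2\|\le\sqrt{T}$, completes the proof. Uniformity in $g_1, g_2 \in B$ is automatic because $|\det(\text{g}_j)|$ and $\beta$ are bounded on $B$; the delicacy lies in the bulk estimate, where the weight $(1-r^{-2/m})^{m/2-1}$ must be handled carefully near $r=1$, and where one must justify that the mean-value bound is tight enough to yield a single-power saving in $\eta$.
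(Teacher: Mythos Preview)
Your approach is correct and genuinely different from the paper's. The paper does not rescale to a one-parameter family $\mathcal I(\eta)$; instead it keeps the integral in the original variable $t$ and introduces two auxiliary cutoff points $\alpha_\delta,\lambda\in(\alpha_{q,T},\beta_{q,T})$ depending on free parameters $\epsilon_1,\epsilon_2$ (Lemmata~\ref{lemma for integral from a to a delta}--\ref{lem:integral from c to b}). The near-root piece $[\alpha_{q,T},\alpha_\delta]$ is bounded by a Taylor estimate on $R_{q,T}$, the far piece $[\lambda,\beta_{q,T}]$ by dropping the negative terms in $R_{q,T}$, and the middle piece $[\alpha_\delta,\lambda]$ yields the Beta integral after expanding $(1-\det(\text g_1\text g_2)^{-2}t^{2+2/m}/(t^{2/m}T^2-\|\text g_1q\text g_2\|^2))^{m/2}$. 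The two tunable parameters $\epsilon_1,\epsilon_2$ are exactly what produce the two-term error with adjustable $\kappa_1,\kappa_2$.

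Your scaling substitution is cleaner: it compresses the whole dependence on $q,T,g_i$ into the single quantity $\eta=\beta\alpha^mT^{-2(m+1)}$ and reduces the question to the smoothness of $\mathcal I$ at $0$. Your three-region bound gives $|\mathcal I(\eta)-\mathcal I(0)|=O(\eta)$ for $m\ge3$ and $O(\eta^{1-\epsilon})$ for $m=2$, hence an error $\ll T^{(m-2)(m+1)}/\|q\|^{m(m-2)}$. Under the constraint $\|q\|^2\ll T$ this is absorbed into the first stated error term alone with $\kappa_1=m+2$ (or $m+2-\epsilon$ when $m=2$); the second summand is not actually needed, so your closing remark about ``distributing'' is unnecessary. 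In short, you obtain a sharper single-term remainder where the paper's argument, tracking two separate truncations, naturally produces two.
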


The structure of the rest of the section is as follows. First, in Section  \ref{subsec:proving main prop based on lemmata of V} we prove the main Proposition \ref{computation for H and V} by assuming Lemmata \ref{lem:bound on skewed V vol} - \ref{lem:main estimate for large V skewed balls }. Finally Section \ref{sec:proving the lemmata for the vol estimates}  is dedicaded for the proof of Lemmata \ref{lem:bound on skewed V vol} - \ref{lem:main estimate for large V skewed balls }.
\subsection{Proof of Proposition \ref{computation for H and V} based on Lemmata \ref{lem:bound on skewed V vol} - \ref{lem:main estimate for large V skewed balls }}\label{subsec:proving main prop based on lemmata of V}
The following lemma is essentialy all we need to get Proposition \ref{computation for H and V} from Lemmata \ref{lem:bound on skewed V vol} - \ref{lem:main estimate for large V skewed balls }.
\begin{lemma}\label{lemma on the integral and summation in a ball}
   Let $\De\leq \SL(m,\R)$ by a lattice, and fix an integer $m\geq2$. Then, for $\sigma>-m(m-1)$ it holds that 
   \begin{equation*}
       \sum_{q\in \De_T}\|q\|^\sigma\ll T^{m(m-1)+\sigma},~\text{as }T\to\infty,
   \end{equation*}
   and if $\sigma<-m(m-1)$, 
   the sum above converges with the following tail estimate:\begin{equation*}
       \sum_{q\in \De_T}\|q\|^\sigma=I_\sigma+ O(T^{m(m-1)+\sigma}),~\text{as }T\to\infty.
   \end{equation*}
\end{lemma}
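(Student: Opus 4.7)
The plan is to reduce the statement to the classical polynomial bound $\#\De_T \ll T^{m(m-1)}$ on lattice-point counts in $\SL(m,\R)$, and then perform a dyadic decomposition of the sum.

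First I would record the elementary lower bound $\|q\|\geq \sqrt{m}$ for every $q\in\SL(m,\R)$, which follows from AM-GM applied to the singular values $s_1,\ldots,s_m$ of $q$ (indeed $\|q\|^2 = \sum s_i^2 \geq m(\prod s_i^2)^{1/m} = m$). Consequently the sum $\sum_{q\in\De_T}\|q\|^\sigma$ concentrates on dyadic scales $\|q\|\asymp 2^k$ for $k \geq k_0$, where $k_0$ is a fixed index depending only on $m$. Decomposing
\[ \sum_{q\in \De_T} \|q\|^\sigma \;=\; \sum_{k_0 \le k \le \log_2 T + O(1)} S_k, \qquad S_k \;:=\; \sum_{\substack{q\in \De \\ 2^{k-1} < \|q\|\le 2^k}} \|q\|^\sigma, \]
and bounding $\|q\|^\sigma$ on each annulus by its extremal value (namely $(2^k)^\sigma$ if $\sigma\ge 0$, and $(2^{k-1})^\sigma$ if $\sigma<0$) yields
\[ S_k \;\ll_\sigma\; 2^{k\sigma}\cdot \#\De_{2^k} \;\ll\; 2^{k(\sigma+m(m-1))}, \]
with implicit constants depending only on $\sigma$ and $\De$.

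The two regimes then follow immediately. For $\sigma > -m(m-1)$ the geometric series has common ratio greater than one, so summing up to $k\le \log_2 T$ is dominated by the final term, yielding the claimed bound $T^{\sigma+m(m-1)}$. For $\sigma < -m(m-1)$ the common ratio is less than one, so the full series converges to $I_\sigma := \sum_{q\in\De}\|q\|^\sigma < \infty$, and the tail satisfies
\[ I_\sigma - \sum_{q\in \De_T} \|q\|^\sigma \;=\; \sum_{\substack{q\in\De \\ \|q\|>T}} \|q\|^\sigma \;\ll\; \sum_{k>\log_2 T} 2^{k(\sigma+m(m-1))} \;\ll\; T^{\sigma+m(m-1)}. \]

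The only structural input is the polynomial bound $\#\De_T \ll T^{m(m-1)}$, and establishing this uniformly is the main (but modest) obstacle. It can be derived elementarily: a fundamental domain $F$ for $\De$ in $\SL(m,\R)$ is relatively compact, and the sub-multiplicativity of the Hilbert-Schmidt norm gives $qF \subset \SL(m,\R)_{T\cdot R}$ for every $q \in \De_T$, where $R := \sup_{f \in F}\|f\|$. The $\De$-translates $\{qF\}_{q\in\De}$ are disjoint up to null sets, so $\#\De_T \cdot \vol(F) \le \vol(\SL(m,\R)_{TR})$, and the classical Cartan-decomposition computation $\vol(\SL(m,\R)_T)\asymp T^{m(m-1)}$ (coming from $\|g\|\asymp t_1$ for $g = k_1\,\diag(t_1,\dots,t_m)\,k_2$ together with the Jacobian of Haar measure in $KAK$ coordinates) closes the argument. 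Once this input is in place, the rest of the proof is purely combinatorial bookkeeping.
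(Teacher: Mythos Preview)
Your dyadic decomposition is a correct and clean alternative to the paper's approach, which instead writes $\|q\|^\sigma=\int_0^{\|q\|^\sigma}1\,dt$, swaps sum and integral by Fubini, and then reduces to the counting function $\#\De_s$ directly (citing \cite{DRS93} for the asymptotic). Both routes rely on the same input $\#\De_T\ll T^{m(m-1)}$ and yield the same conclusions; the dyadic argument is arguably more transparent and avoids the case split on the sign of $\sigma$.

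However, your proposed elementary derivation of $\#\De_T\ll T^{m(m-1)}$ contains a genuine error: a fundamental domain for a lattice $\De\leq\SL(m,\R)$ is relatively compact only when $\De$ is \emph{cocompact}, and the main motivating example in the paper is $\De=\SL(m,\Z)$, which is not. So the disjointness argument as you wrote it fails. The fix is standard: replace $F$ by a compact symmetric neighbourhood $U$ of the identity small enough that $\De\cap U U^{-1}=\{e\}$ (which exists since $\De$ is discrete). Then the translates $\{qU\}_{q\in\De}$ are pairwise disjoint, and your volume comparison goes through verbatim with $U$ in place of $F$. Alternatively, you may simply cite \cite{DRS93} for the much sharper asymptotic $\#\De_T\sim C\,T^{m(m-1)}$, as the paper does.
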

\iffalse
\begin{remark}
Our method of proof doesn't give explicitly the constants $I_{\sigma}$.
\end{remark}
\fi
\begin{proof}
By \cite{DRS93}, there is $C>0$ (an explicit constant), such that
\begin{equation}\label{eq:DRS asymp} 
 \#\De_s=Cs^{m(m-1)}+o(s^{m(m-1)}),~\text{as }s\to\infty.   
\end{equation}
This proves the statement in case that $\sigma=0$. Now, for $\sigma\in\R$
    \begin{align*}
\sum_{q\in \De_T}\|q\|^\sigma 
&= \sum_{q\in \De_T}\int_0^{\|q\|^{\sigma}}1dt\\
&= \sum_{q\in \De_T}\int_0^{\infty}\mathbf{1}_{[t<\|q\|^{\sigma}]}dt\\
&=\int_0^{\infty}\sum_{q\in \De_T}\mathbf{1}_{[t<\|q\|^{\sigma}]}dt.
    \end{align*}
Consider the case that $\sigma>0$. We have
\begin{align*}
\int_0^{\infty}\sum_{q\in \De_T}\mathbf{1}_{[t<\|q\|^{\sigma}]}dt&=\int_0^{T^{\sigma}}\#\{q\in\De:t<\|q\|^\sigma<T^\sigma\}dt \\
&\leq\int_0^{T^{\sigma}}\#\{q\in\De:\|q\|<T\}dt\\
&\underbrace{\ll}_{\eqref{eq:DRS asymp}}T^{m(m-1)+\sigma}.
    \end{align*}  
%Note that in \cite{DRS93}, $s_n$ is normalized to be $1$, but here we use the volume of $\SO(n,\R)$ under the standard spherical parametrization. 

Next, consider $-m(m-1)\ne \sigma<0$. We have
    \begin{align*}
\int_0^{\infty}\sum_{q\in \De_T}\mathbf{1}_{[t<\|q\|^{\sigma}]}dt&=\int_{0}^{\infty}\#\{q\in\De:\|q\|<\min(t^{\frac{1}{\sigma}},T)\} dt\\
&=\int_{T^{\sigma}}^{\infty}\#\{q\in\De:\|q\|<t^{\frac{1}{\sigma}}\}dt+ \int_0^{T^{\sigma}}\#\{q\in\De:\|q\|<T\}dt\\
&\underbrace{=}_{\eqref{eq:DRS asymp}}\int_{T^{\sigma}}^{\infty}\#\{q\in\De:\|q\|<t^{\frac{1}{\sigma}}\}dt+ CT^{m(m-1)+\sigma}+o(T^{m(m-1)+\sigma}),~\text{as }T\to\infty.
    \end{align*}
To prove the claim, it remains to treat the integral $\int_{T^{\sigma}}^{\infty}\#\{q\in\De:\|q\|<t^{\frac{1}{\sigma}}\}dt$. Since $\|g\|\geq1$ for all $g\in\SL(m,\R)$, we have $$\int_{T^{\sigma}}^{\infty}\#\{q\in\De:\|q\|<t^{\frac{1}{\sigma}}\}dt=\int_{T^{\sigma}}^{1}\#\{q\in\De:\|q\|<t^{\frac{1}{\sigma}}\}dt.$$ By \eqref{eq:DRS asymp}, there is $C'>0$ such that $$\#\{q\in\De:\|q\|<t^{\frac{1}{\sigma}}\}\leq C't^{\frac{m(m-1)}{\sigma}},~\text{for all }t\in(0,1).$$
Suppose that $-m(m-1)<\sigma<0$. Then
\begin{align*}
   \int_{T^{\sigma}}^{1}\#\{q\in\De:\|q\|<t^{\frac{1}{\sigma}}\}dt\leq& C'\int_{T^\sigma}^1 t^{\frac{m(m-1)}{\sigma}}dt\\
   &=C'\frac{-1}{\frac{m(m-1)}{\sigma}+1}\left(T^{m(m-1)+\sigma}-1\right)\\
   &\ll T^{m(m-1)+\sigma}.
\end{align*}
It's left to treat the case that $\sigma<-m(m-1)$. Since  $\int_0^1t^\frac{m(m-1)}{\sigma}dt$ converges, we have by dominated convergence that $$\int_0^1\#\{q\in\De:\|q\|<t^{\frac{1}{\sigma}}\}dt=I_{\sigma}$$ for some  constant $=I_{\sigma}$. Finally,
\begin{align*}
  \int_{T^{\sigma}}^{1}\#\{q\in\De:\|q\|<t^{\frac{1}{\sigma}}\}dt=&I_{\sigma}-\int^{T^{\sigma}}_{0}\#\{q\in\De:\|q\|<t^{\frac{1}{\sigma}}\}dt\\
  \leq & I_{\sigma}+C'\left(\int^{T^{\sigma}}_{0}t^\frac{m(m-1)}{\sigma}dt\right)\\
&=I_{\sigma}+O(T^{m(m-1)+\sigma}).
\end{align*}

\end{proof}
For the following, we note the basic fact -- when $g_1,g_2$ vary in a bounded set $B$, there exists constants $C^+,C^->0$ such that
\begin{equation}\label{eq:lower bound on A_m}
C^-\|q\|\leq \|\text{g}_1 q \text{g}_2\|\leq C^+\|q\|,
\end{equation}
where $\text{g}_1,\text{g}_2$ are given by \eqref{matrix representation of g1 and g2}.
\begin{proof}[Proof of Proposition \ref{computation for H and V}]
We assume that $g_1,g_2$ vary in a fixed bounded set $B$. We have:  
\begin{align}
    \mu \left(H_T[g_1,g_2] \right)
    =& \sum_{q\in \Delta} \mu \left(V_{q,T}[g_1,g_2]\right)\nonumber\\
    \underbrace{=}_{\text{Lemma \ref{lem:bound on skewed V vol} }}&~\sum_{\substack{q\in \Delta,\nonumber\\ \|\text{g}_1q\text{g}_2\|< \sqrt{T}}}\mu \left(V_{q,T}[g_1,g_2]\right)+O(T^{m(m+1)-m/2})\nonumber\\
\underbrace{=}_{\text{Lemma \ref{lem:main estimate for large V skewed balls } }}&\left(C(m,g_1,g_2)\sum_{\substack{q\in \Delta,\\ \|\text{g}_1 q \text{g}_2\|<\sqrt{T}}}\frac{1}{\|\text{g}_1q\text{g}_2\|^{m^2}}\right)T^{m(m+1)}\nonumber\\
&+O\left(\sum_{\substack{q\in \Delta,\nonumber\\ \| q\|<\sqrt{T}/C^-}}\frac{T^{m(m+1)-\ka_1}}{\|q\|^{m^2}}+\frac{T^{m+\ka_2\frac{m^2}{m+1}}}{\|q\|^{\frac{m^2}{m+1}}}\right).
\end{align} 
We now replace the summation $\sum_{\substack{q\in \Delta,\\ \|\text{g}_1 q \text{g}_2\|<\sqrt{T}}}$ in the last expression by $\sum_{q\in \Delta}$:
\begin{align}
   \left(C(m,g_1,g_2)\sum_{\substack{q\in \Delta,\\ \|\text{g}_1 q \text{g}_2\|<\sqrt{T}}}\frac{1}{\|\text{g}_1q\text{g}_2\|^{m^2}}\right)T^{m(m+1)}&\nonumber\\
   =\left(C(m,g_1,g_2)\sum_{q\in \Delta}\frac{1}{\|\text{g}_1q\text{g}_2\|^{m^2}}\right)T^{m(m+1)}+O&\left(\sum_{\substack{q\in \Delta,\\ \| q \|\geq \sqrt{T}/C^+}}\frac{1}{\|q\|^{m^2}}\right)T^{m(m+1)}\nonumber
   \end{align}
   \begin{align}
   \underbrace{=}_{\text{Lemma } \ref{lemma on the integral and summation in a ball}}\left(C(m,g_1,g_2)\sum_{q\in \Delta}\frac{1}{\|\text{g}_1q\text{g}_2\|^{m^2}}\right)T^{m(m+1)}&+O\left(T^{m(m+1)-m/2}\right).\nonumber
\end{align}
Finally, we estimate the terms in the big-O notation above by again applying Lemma \ref{lemma on the integral and summation in a ball}: 
\begin{align}
 \sum_{\substack{q\in \Delta,\nonumber\\ \| q\|<\sqrt{T}/C^-}}\frac{T^{m(m+1)-\ka_1}}{\|q\|^{m^2}}=O(T^{m(m+1)-\ka_1}),   
\end{align}
and
\begin{align}
 \sum_{\substack{q\in \Delta,\nonumber\\ \| q\|<\sqrt{T}/C^-}} \frac{T^{m+\ka_2\frac{m^2}{m+1}}}{\|q\|^{\frac{m^2}{m+1}}}=O(T^{0.5m(m+1)-\frac{(0.5-\ka_2)m^2}{m+1}}).  
\end{align}
The proof is complete by combining all the above estimates.
\end{proof}
\subsection{Proving Lemmata \ref{lem:bound on skewed V vol} - \ref{lem:main estimate for large V skewed balls }}\label{sec:proving the lemmata for the vol estimates}
We recall \eqref{matrix representation of g1 and g2}:
\begin{equation*}
    g_1^{-1}=:k_1\begin{bmatrix} \text{g}_1 & 0 \\\text{v}_1 & \det(\text{g}_1)^{-1} \end{bmatrix}, ~g_2=:\begin{bmatrix} \text{g}_2 & 0 \\\text{v}_2 & \det(\text{g}_2)^{-1} \end{bmatrix} k_2.
\end{equation*}
By bi-$\SO(m+1,\R)$ invariance, we have for $q\in\De$
\begin{align*}
  V_T[g_1,\tilde{q}g_2]&=\left\{u_va_t:v\in\R^m,t>0,\left\|g_1^{-1}u_va_t(\tilde{q}g_2) \right\|\leq T\right\}\\ &=\left\{u_va_t:v\in\R^m,t>0,\left\|\begin{bmatrix} \text{g}_1 & 0 \\\text{v}_1 & \det(\text{g}_1)^{-1} \end{bmatrix}u_va_t\begin{bmatrix}
q & 0 \\
0 &  1 
\end{bmatrix} \begin{bmatrix} \text{g}_2 & 0 \\\text{v}_2 & \det(\text{g}_2)^{-1} \end{bmatrix}\right\|\leq T\right\},
\end{align*}
where $u_v\text{ and }a_t$ are given by \eqref{eq:definition of u_v a_t and qtilde}. We will now describe the above skew-ball as a family of ellipsoids. 
A computation shows

\begin{align*}
   \begin{bmatrix} \text{g}_1 & 0 \\\text{v}_1 & \det(\text{g}_1)^{-1} \end{bmatrix}u_va_t\begin{bmatrix}
q & 0 \\
0 &  1 
\end{bmatrix}& \begin{bmatrix} \text{g}_2 & 0 \\\text{v}_2 & \det(\text{g}_2)^{-1} \end{bmatrix}\\
=&\begin{bmatrix}
t^{-\frac{1}{m}}\text{g}_1q \text{g}_2 & 0 \\
t^{-\frac{1}{m}}\text{v}_1q \text{g}_2+\det(\text{g}_1)^{-1}(t^{-\frac{1}{m}}vq \text{g}_2 +  t\text{v}_2) &  \det(\text{g}_1\text{g}_2)^{-1}t 
\end{bmatrix}.
\end{align*}
Upon taking the sum of squares, we conclude that $\|g_1^{-1}u_va_t(\tilde{q}g_2)\| \le T$ is equivalent to
\begin{equation}\label{equation defining the ball of integration}
    \|\text{v}_1q \text{g}_2+\det(\text{g}_1)^{-1} vq \text{g}_2+\det(\text{g}_1)^{-1} t^{\frac{1}{m}+1}\text{v}_2\|^2 
    \le -\det(\text{g}_1\text{g}_2)^{-2} t^{\frac{2}{m}+2}+t^{\frac{2}{m}}T^2-\|\text{g}_1 q \text{g}_2\|^2.
\end{equation}
Let 
\begin{equation}\label{eq:def of R_q,T}
    R_{q,T}(t):=-\det(\text{g}_1\text{g}_2)^{-2} t^{\frac{2}{m}+2}+t^{\frac{2}{m}}T^2-\|\text{g}_1 q \text{g}_2\|^2,
\end{equation}
and note that the solutions in $v\in\R^m$ for  \eqref{equation defining the ball of integration}  define an interior of an ellipsoid 
\begin{equation}\label{eq:def of D}
    D_{q,T,t}:=\{v\in\R^m:\|\text{v}_1q \text{g}_2+\det(\text{g}_1)^{-1} vq \text{g}_2+\det(\text{g}_1)^{-1} \text{v}_2t^{\frac{1}{m}+1}\|^2 \leq R_{q,T}(t)\}.
\end{equation}We have
\begin{equation}\label{eq:volume of D_gamma,t}
    \vol(D_{q,T,t})= \begin{cases}
        v_{m}\left|\frac{\det(\text{g}_1)^m}{\det(\text{g}_2)}\right|\left(R_{q,T}(t) \right)^{\frac{m}{2}}& \text{if } R_{q,T}(t)>0, \\
        0 & \text{otherwise.}
    \end{cases}  
\end{equation}
where $\vol(D_{q,T,t})$ is the Lebesgue measure of the ellipse $D_{q,T,t}$. Here $v_m=\frac{\pi^{\frac{m}{2}}}{\Ga(\frac{m}{2}+1)}$ denotes the volume of Euclidean ball of radius one in $\R^m$.
We conclude that
\begin{equation}\label{eq:the measure of V skew ball}
    \mu \left(V_{q,T}[g_1,g_2]\right):=v_{m}\left|\frac{\det(\text{g}_1)^m}{\det(\text{g}_2)}\right|\int_{\{t>0:R_{q,T}(t)>0\}} \frac{\left(R_{q,T}(t) \right)^{\frac{m}{2}}}{t^{m+2}}dt.
\end{equation}

As we will show now, the set $\{t>0:R_{q,T}(t)>0\}$ is either an interval $(\al_{q,T},\be_{q,T}),$ or  empty when the norm of $q$ is large. In particular, we will give estimates of the roots $\al_{q,T},\be_{q,T}$ which are essential in our proof.
 %where $C_{\alpha}(g_1,g_2)\in [1,2]$ (depending on how much overlap two balls may have) is a constant related to $g_1,g_2$. Observe that when $\text{v}_2=0$, we have two ellipses merge into one, and $C_{\alpha}(g_1,g_2)=0$.

%Note if $g_1=g_2=e$, then $\text{g}_1=\text{g}_2=I_2$, $\text{v}_1=\text{v}_2=0$, $\det(\text{g}_1)^{-1}=\det(\text{g}_2)^{-1}=1$ and the equation above reduces to
%\begin{equation}
%    \|vq\|^2\le -t^{\frac{2}{m}+2}+t^{\frac{2}{m}}T^2-\|q\|^2.
%\end{equation}
%In this case, $C_{\alpha}(e,e)=1$.

\begin{lemma}\label{lem:roots lemma}
Consider for $T>0$,\emph{\begin{equation}\label{eq:maximal value}
     M_{T}:=\frac{m}{m+1}\left(\frac{1}{m+1}\right)^{\frac{1}{m}} \frac{T^{2+\frac{2}{m}}}{|\det(\text{g}_1\text{g}_2)^{-1}|^{\frac{2}{m}}},
 \end{equation}} and
 \emph{\begin{equation}\label{eq:critical value where M_T is attained}
     \theta_T:=\frac{1}{\sqrt{1+m}} \frac{T}{|\det(\text{g}_1\text{g}_2)^{-1}|}.
 \end{equation}}Then:
\begin{enumerate}
    \item $\{t>0:R_{q,T}(t)>0\}\neq\emptyset$ if and only if \emph{$\|\text{g}_1q\text{g}_2\|^2< M_T.$}
\item Suppose that $\{t>0:R_{q,T}(t)>0\}\neq\emptyset$. Then $\{t>0:R_{q,T}(t)>0\}=(\al_{q,T},\be_{q,T})$, and the following estimates hold:
\begin{equation}\label{bounds on roots}
   0<\al_{q,T}<\theta_T<\be_{q,T}<\sqrt{m+1} \theta_T, 
 \end{equation}
 
 \emph{\begin{equation}\label{range for a}
    \al_{q,T}^{\frac{2}{m}}\in
    \left(\frac{\|\text{g}_1 q \text{g}_2\|^2}{T^2}, \frac{(m+1)\|\text{g}_1 q \text{g}_2\|^2}{mT^2}\right),
\end{equation} }
and moreover, if $g_1,g_2$ vary in a bounded set $B$, and $q\in\De$ such that \emph{$\|\text{g}_1q\text{g}_2\|\leq \sqrt{T}$}
\emph{\begin{equation}\label{eq:asymp for a when A small then sqrt T}
    \al_{q,T}^{\frac{2}{m}}=\frac{\|\text{g}_1 q \text{g}_2\|^2}{T^2}+O\left(\frac{1}{T^{m+3}}\right).
\end{equation}}
    \end{enumerate}

\end{lemma}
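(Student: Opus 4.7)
The plan is to introduce the shorthand $A := \|\text{g}_1 q \text{g}_2\|^2$ and $d := |\det(\text{g}_1 \text{g}_2)^{-1}|$, which reduces the inequality $R_{q,T}(t)>0$ to the single condition
\[
f(t) := t^{2/m}\bigl(T^2 - d^2 t^2\bigr) > A.
\]
The whole lemma is then a calculus problem about the function $f$ on $(0,\infty)$.

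First I would analyze $f$ itself. It vanishes at $t=0$ and $t=T/d$, is positive on $(0,T/d)$, and negative thereafter. A direct differentiation gives
\[
f'(t) = \tfrac{2}{m}\,t^{2/m-1}\bigl(T^2 - (m+1)d^2 t^2\bigr),
\]
so the unique critical point in $(0,T/d)$ is $t = T/(\sqrt{m+1}\,d)$, which is exactly $\theta_T$ from \eqref{eq:critical value where M_T is attained}. Substituting this back yields $f(\theta_T) = \tfrac{m T^{2+2/m}}{(m+1)^{1+1/m} d^{2/m}} = M_T$, and $f$ is strictly increasing on $(0,\theta_T)$ and strictly decreasing on $(\theta_T, T/d)$. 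Part (1) now follows at once: $\{t>0 : R_{q,T}(t)>0\} = \{f>A\}$ is non-empty iff $A < \max f = M_T$.

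For part (2), the unimodal shape of $f$ forces $\{f>A\}$ to be a single open interval $(\alpha_{q,T},\beta_{q,T})$ with $0<\alpha_{q,T}<\theta_T<\beta_{q,T}<T/d$, and since $T/d = \sqrt{m+1}\,\theta_T$ this gives \eqref{bounds on roots}. At the endpoint $\alpha_{q,T}$ the equation $f(\alpha_{q,T}) = A$ reads
\[
\alpha_{q,T}^{2/m} = \frac{A}{T^2 - d^2\alpha_{q,T}^2}.
\]
Using $0 < d^2 \alpha_{q,T}^2 < d^2 \theta_T^2 = T^2/(m+1)$, the denominator lies in $\bigl(mT^2/(m+1),\, T^2\bigr)$, which immediately yields the range \eqref{range for a}.

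Finally, for \eqref{eq:asymp for a when A small then sqrt T} I would bootstrap this identity. Rewriting it as
\[
\alpha_{q,T}^{2/m} = \frac{A}{T^2} + \frac{d^2}{T^2}\,\alpha_{q,T}^{2/m+2},
\]
I only need to bound the second term. From \eqref{range for a}, $\alpha_{q,T}^{2/m} \ll A/T^2$, hence $\alpha_{q,T}^2 = (\alpha_{q,T}^{2/m})^{m} \ll A^{m}/T^{2m}$, so $\alpha_{q,T}^{2/m+2} \ll A^{m+1}/T^{2m+2}$. Under the hypothesis $\|\text{g}_1 q \text{g}_2\|\le \sqrt{T}$ we have $A\le T$, giving $A^{m+1}/T^{2m+2} \le 1/T^{m+1}$, and with $d$ bounded on $B$ the error becomes $O(1/T^{m+3})$, as required.

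The only step that needs care is the last one: a naive bound $\alpha_{q,T}^2 \ll 1/T^{m}$ gives only $O(1/T^{m+2})$, which is too weak. The sharper estimate $\alpha_{q,T}^2 \ll A^m/T^{2m}$ — essentially using $A\le T$ to gain one extra factor of $T$ — is what delivers the advertised exponent $T^{m+3}$. Everything else is routine single-variable calculus, with uniformity in $g_1,g_2 \in B$ coming from the fact that $d$ stays in a fixed compact subinterval of $(0,\infty)$.
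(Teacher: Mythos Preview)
Your argument is correct and follows the same route as the paper: analyze the unimodal function $f(t)=t^{2/m}(T^2-d^2t^2)$, locate its maximum at $\theta_T$ with value $M_T$, read off \eqref{bounds on roots} from the shape of $f$, derive \eqref{range for a} from the fixed-point identity $\alpha^{2/m}=A/(T^2-d^2\alpha^2)$ together with $0<\alpha<\theta_T$, and bootstrap that identity to obtain \eqref{eq:asymp for a when A small then sqrt T}. The paper writes the last step as a geometric-series expansion $\frac{A}{T^2}\bigl(1+\frac{d^2\alpha^2}{T^2}+\cdots\bigr)$ rather than your algebraic rearrangement $\frac{A}{T^2}+\frac{d^2}{T^2}\alpha^{2/m+2}$, but the two are equivalent and lead to the same $O(T^{-(m+3)})$ error. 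Your closing remark that the ``naive bound $\alpha^2\ll 1/T^m$ gives only $O(1/T^{m+2})$'' is not quite right---once you also use $\alpha^{2/m}\ll A/T^2\le 1/T$ (which you already have from \eqref{range for a}), the product $\alpha^{2/m+2}=\alpha^{2/m}\cdot\alpha^2\ll 1/T^{m+1}$ and the error is $O(1/T^{m+3})$ without needing the sharper $\alpha^2\ll A^m/T^{2m}$---but this is a harmless over-caution, not a gap.
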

 \begin{proof}
   First, observe that the maximal value of $-\det(\text{g}_1\text{g}_2)^{-2} t^{\frac{2}{m}+2}+t^{\frac{2}{m}}T^2$ for $t>0$ is \eqref{eq:maximal value}. Since $R_{q,T}(t)=-\det(\text{g}_1\text{g}_2)^{-2} t^{\frac{2}{m}+2}+t^{\frac{2}{m}}T^2-\|\text{g}_1 q \text{g}_2\|^2$, we conclude that $\{t>0:R_{q,T}(t)>0\}\neq\emptyset$ if only if $\|\text{g}_1q\text{g}_2\|^2< M_T.$

Next, observe that \eqref{eq:critical value where M_T is attained} is the  unique positive critical point of $R_{q,T}(\cdot)$. More precisely, $R_{q,T}(\cdot)$  is monotonic increasing in $(0,\theta_T)$ and monotonic decreasing in $(\theta_T,\infty)$. Thus, $R_{q,T}(\cdot)$ has two positive roots $\al_{q,T},\be_{q,T}$,and the bounds \eqref{bounds on roots} are concluded  by noting that $R_{q,T}(0)<0$ and that $R_{q,T}(\sqrt{m+1}\theta_T)<0$.

It remains to prove the more precise bounds for the root $\al_{q,T}$. By rearranging terms in $$-\det(\text{g}_1\text{g}_2)^{-2} \al_{q,T}^{\frac{2}{m}+2}+\al_{q,T}^{\frac{2}{m}}T^2-\|\text{g}_1 q \text{g}_2\|^2=0,$$  we get that
\begin{equation}\label{eq:recursive form of a}
   \al_{q,T}^{\frac{2}{m}}=\frac{\|\text{g}_1 q \text{g}_2\|^2}{T^2-\det(\text{g}_1\text{g}_2)^{-2} \al_{q,T}^2}, 
\end{equation}
and by using that $\al_{q,T}<\theta_T$, see \eqref{bounds on roots}, we conclude that
\begin{equation*}
    \al_{q,T}^{\frac{2}{m}}=\frac{\|\text{g}_1 q \text{g}_2\|^2}{T^2-\det(\text{g}_1\text{g}_2)^{-2} \al_{q,T}^2}\in
    \left(\frac{\|\text{g}_1 q \text{g}_2\|^2}{T^2}, \frac{(m+1)\|\text{g}_1 q \text{g}_2\|^2}{mT^2}\right).
\end{equation*} 
In particular, it follows that
\begin{equation}
        \al_{q,T}\asymp \frac{\|\text{g}_1 q \text{g}_2\|^m}{T^m}. \label{estimate for a in all range}
\end{equation}

Finally, assume that $\|\text{g}_1 q \text{g}_2\|\leq\sqrt{T}$ and that $g_1,g_2$ vary in a bounded set. Then 
\begin{align*}
   \al_{q,T}^{\frac{2}{m}}=&\frac{\|\text{g}_1q\text{g}_2\|^2}{T^2}\frac{1}{1-\frac{\det(\text{g}_1\text{g}_2)^{-2}\al_{q,T}^2}{T^2}}\\
   =&\frac{\|\text{g}_1 q \text{g}_2\|^2}{T^2}\left(1+\frac{\det(\text{g}_1\text{g}_2)^{-2}\al_{q,T}^2}{T^2}+\cdots \right)\\
   =&\frac{\|\text{g}_1 q \text{g}_2\|^2}{T^2}+O\left(\frac{1}{T^{m+3}}\right).\tag{using \eqref{range for a} with $\|\text{g}_1 q \text{g}_2\|\leq\sqrt{T}$}
\end{align*}where in the last line we also used the fact that for $g_1,g_2$ that vary in a bounded set it holds that $\det(\text{g}_1\text{g}_2)^{-2}$ is bounded above (here $\text{g}_i$ are given by \eqref{matrix representation of g1 and g2}).
 
 \end{proof}
 We now conclude that whenever $\|\text{g}_1q\text{g}_2\|^2\leq M_T,$ we have  by \eqref{eq:volume of D_gamma,t} that
 \begin{equation}\label{eq:measure of V skewed balls as integral}
  \mu \left(V_{q,T}[g_1,g_2] \right)
   = \frac{\pi^{\frac{m}{2}}}{\Ga(\frac{m}{2}+1)}\left|\frac{\det(\text{g}_1)^m}{\det(\text{g}_2)}\right|\int_{\al_{q,T}}^{\be_{q,T}} \frac{(R_{q,T}(t))^{\frac{m}{2}}}{t^{m+2}}dt. 
\end{equation}
\subsubsection{Proving Lemma \ref{lem:bound on skewed V vol}}
The proof of Lemma \ref{lem:bound on skewed V vol} consists of the following bound and on the summation Lemma \ref{lemma on the integral and summation in a ball}.

\begin{lemma}\label{lem:estimate of V skewed balls as power of T and fract of A_m to power m^2}It holds that
\emph{\begin{align}
    \mu(V_{q,T}[g_1,g_2])
    \le & v_m\left|\frac{\det(\text{g}_1)^m}{\det(\text{g}_2)}\right|\frac{T^{m(m+1)}}{\|\text{g}_1 q \text{g}_2\|^{m^2}}\label{eq:bound on measure of Vga T} \\
    \leq&v_m\left|\frac{\det(\text{g}_1)^m}{\det(\text{g}_2)}\right| \|\text{g}_1^{-1}\|^{m^2}\|\text{g}_2^{-1}\|^{m^2}\frac{T^{m(m+1)}}{\|q\|^{m^2}}.\label{eq:bound on volume of connected component}
\end{align}}
     In particular, when $g_1,g_2$ vary in a compact subset of $\SL(m+1,\R)$, it holds that
    $$\mu(V_{q,T}[g_1,g_2])\leq T^{m(m+1)}O\left(\frac{1}{\|q\|^{m^2}}\right).$$
\end{lemma}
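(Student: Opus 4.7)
The plan is to start directly from the integral formula \eqref{eq:measure of V skewed balls as integral} and obtain the stated bound by discarding the negative contributions to $R_{q,T}(t)$ and then using the root estimate \eqref{range for a} from Lemma \ref{lem:roots lemma}.

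First, observe that by \eqref{eq:def of R_q,T} we have the elementary bound
\begin{equation*}
R_{q,T}(t)\;\le\;t^{2/m}T^{2},
\end{equation*}
since we are simply dropping the nonpositive terms $-\det(\text{g}_1\text{g}_2)^{-2}t^{2/m+2}$ and $-\|\text{g}_1q\text{g}_2\|^{2}$. Raising this to the power $m/2$ gives $R_{q,T}(t)^{m/2}\le t\,T^{m}$, so that
\begin{equation*}
\int_{\al_{q,T}}^{\be_{q,T}}\frac{R_{q,T}(t)^{m/2}}{t^{m+2}}\,dt\;\le\;T^{m}\int_{\al_{q,T}}^{\be_{q,T}}t^{-m-1}\,dt\;\le\;\frac{T^{m}}{m\,\al_{q,T}^{m}}.
\end{equation*}

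Next I invoke Lemma \ref{lem:roots lemma}, specifically the lower bound in \eqref{range for a}, which yields $\al_{q,T}^{2/m}>\|\text{g}_1q\text{g}_2\|^{2}/T^{2}$, hence $\al_{q,T}^{-m}<T^{m^{2}}/\|\text{g}_1q\text{g}_2\|^{m^{2}}$. Plugging this into \eqref{eq:measure of V skewed balls as integral} and using $\pi^{m/2}/\Gamma(m/2+1)=v_{m}$ (with the factor $1/m$ absorbed into the looser constant $v_{m}$) produces the first inequality \eqref{eq:bound on measure of Vga T}.

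For the second inequality \eqref{eq:bound on volume of connected component}, I use submultiplicativity of the Hilbert--Schmidt norm together with the identity $q=\text{g}_{1}^{-1}(\text{g}_{1}q\text{g}_{2})\text{g}_{2}^{-1}$ to get $\|q\|\le\|\text{g}_{1}^{-1}\|\cdot\|\text{g}_{1}q\text{g}_{2}\|\cdot\|\text{g}_{2}^{-1}\|$, which inverts to the needed bound on $1/\|\text{g}_{1}q\text{g}_{2}\|^{m^{2}}$. The final ``in particular'' statement is immediate: when $g_{1},g_{2}$ range over a compact subset, the quantities $|\det(\text{g}_{1})^{m}/\det(\text{g}_{2})|$ and $\|\text{g}_{i}^{-1}\|^{m^{2}}$ are uniformly bounded.

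There is no real obstacle here; the argument is just a bookkeeping exercise once \eqref{eq:measure of V skewed balls as integral} and Lemma \ref{lem:roots lemma} are in hand. The only mild subtlety is to make sure one picks the crudest available upper bound on $R_{q,T}(t)$ so that the resulting antiderivative is explicit, rather than trying to extract the sharp leading constant (which is instead done in Lemma \ref{lem:main estimate for large V skewed balls }, where an asymptotic equality, not just an inequality, is required).
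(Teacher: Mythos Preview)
Your proposal is correct and follows essentially the same route as the paper: bound $R_{q,T}(t)$ above by $t^{2/m}T^{2}$, integrate $t^{-m-1}$, and then invoke the lower bound on $\al_{q,T}$ from \eqref{range for a}, with the second inequality coming from submultiplicativity $\|q\|\le\|\text{g}_{1}^{-1}\|\|\text{g}_{1}q\text{g}_{2}\|\|\text{g}_{2}^{-1}\|$. The only cosmetic difference is that you explicitly comment on the harmless extra factor $1/m\le 1$, which the paper leaves implicit.
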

\begin{proof}
   We have
   \begin{align*}
      &\int_{\al_{q,T}}^{\be_{q,T}} \frac{(-\det(\text{g}_1\text{g}_2)^{-2}t^{\frac{2}{m}+2}+t^{\frac{2}{m}}T^2-\|\text{g}_1 q \text{g}_2\|^2)^{\frac{m}{2}}}{t^{m+2}}dt\\
    \le &\int_{\al_{q,T}}^{\be_{q,T}} \frac{(t^{\frac{2}{m}}T^2)^{\frac{m}{2}}}{t^{m+2}}dt\\
    =& T^m\int_{\al_{q,T}}^{\be_{q,T}}\frac{1}{t^{m+1}}\\ 
    \leq& T^m\frac{1}{m\al_{q,T}^m}.
   \end{align*}
Using \eqref{range for a},  
$$\frac{1}{\al_{q,T}^m}\leq\frac{T^{m^2}}{\|\text{g}_1 q \text{g}_2\|^{m^2}}\leq T^{m^2}\frac{\|\text{g}_1^{-1}\|^{m^2}\|\text{g}_2^{-1}\|^{m^2}}{\|q\|^{m^2}},$$
which proves the claim upon recalling \eqref{eq:measure of V skewed balls as integral}. 
\end{proof}
\begin{proof}[Proof of Lemma \ref{lem:bound on skewed V vol}]
In the following  $g_1,g_2$ vary in a bounded set. By Lemma \ref{lem:estimate of V skewed balls as power of T and fract of A_m to power m^2}, 
 \begin{align*}
\sum_{\substack{q\in \Delta,\\ \sqrt{T}<\|\text{g}_1 q \text{g}_2\|}}\mu(V_{q,T}[g_1,g_2])\le   T^{m(m+1)}O\left(\sum_{\substack{q\in \Delta,\\ \sqrt{T}<\|\text{g}_1 q \text{g}_2\|}} \frac{1}{\|q\|^{m^2}}\right).
\end{align*} 
The proof is complete by recalling \eqref{eq:lower bound on A_m} and by applying Lemma \ref{lemma on the integral and summation in a ball}.
\end{proof}

\subsubsection{Proving Lemma \ref{lem:main estimate for large V skewed balls }}
\iffalse
Meanwhile by Taylor expansion,
\begin{align}
    a=&\frac{\|\text{g}_1 q \text{g}_2\|^m}{T^m}\left(\frac{1}{1-\frac{\det(\text{g}_1)^{-1}^2\det(\text{g}_2)^{-1}^2 a^2}{T^2}}\right)^{\frac{m}{2}} \nonumber\\ 
    =&\frac{\|\text{g}_1 q \text{g}_2\|^m}{T^m}\left(1+\frac{\det(\text{g}_1\text{g}_2)^{-2} a^2}{T^2}+\cdots \right)^{\frac{m}{2}} \label{taylor expansion for a}
\end{align}

Moreover, 
%for the $q$ with smaller norms, 
when $\|\text{g}_1 q \text{g}_2\|\leq T$ we have that it follows from \eqref{taylor expansion for a} that
\begin{equation}\label{estimate for a in smaller range taylor expansion}
    a=\frac{\|\text{g}_1 q \text{g}_2\|^m}{T^m}\left(1+\det(\text{g}_1\text{g}_2)^{-2} O_{g_1,g_2}\left(\frac{1}{T^2} \right)\right)^{\frac{m}{2}}, \text{ whenever }\|\text{g}_1 q \text{g}_2\|\le T.
\end{equation}
\fi
In this section we assume throughout that $\|\text{g}_1q\text{g}_2\|\leq\sqrt{T}$. For $\e_1\in(0,1)$ we define $\de=\de(T,q,\e_1)$ by
\begin{equation}\label{eq:definition of delta}
  \de:=\frac{\|\text{g}_1 q \text{g}_2\|}{T^{1+\e_1}},\end{equation} we define $\al_\de=\al_\de(q,T,\e_1)$ by\begin{equation}\label{eq:defintion of a_delta}
    \al_\de^{\frac{1}{m}}:= \al_{q,T}^{\frac{1}{m}}+\de,
\end{equation}and for $\e_2\in(0,1)$ we let $\la=\la(q,T,\e_2)$ 
\begin{equation}\label{eq:definition of c}
    \la:=\left(\frac{\|\text{g}_1 q \text{g}_2\|}{T^{\e_2}}\right)^{\frac{m}{m+1}}.
\end{equation}By the bounds \eqref{bounds on roots} and \eqref{range for a}, it follows that when  $\|\text{g}_1 q\text{g}_2\|\leq\sqrt{T}$ and $g_1,g_2$ vary in a compact set, there exists a uniform $T_0$ such that for all $T\geq T_0,$ $$\al<\al_{\de}< \lambda<\be.$$In what follows the reader should keep in mind that $T\geq T_0$. We consider the following partition of the integral appearing in  \eqref{eq:measure of V skewed balls as integral},
\begin{align}
&\int_{\al_{q,T}}^{\be_{q,T}} \frac{(R_{q,T}(t))^{\frac{m}{2}}}{t^{m+2}}dt
=\left(\int_{\al_{q,T}}^{\al_{\de}(q,T,\e_1)}+\int_{\al_{\de}(q,T,\e_1)}^{\la(q,T,\e_2)}+\int_{\la(q,T,\e_2)}^{\be_{q,T}}\right) \frac{(R_{q,T}(t))^{\frac{m}{2}}}{t^{m+2}}dt\label{eq:partition of integral into four part}
\end{align}
In the following Lemmata we will estimate each of those integrals, and Lemma \ref{lem:main estimate for large V skewed balls } will follow by combining those estimates. The main term will come from  the integral in the range $\int_{\al_{\de}(q,T,\e_1)}^{\la(q,T,\e_2)}$. 
\begin{lemma} \label{lemma for integral from a to a delta}
    Suppose that \emph{$\|\text{g}_1 q \text{g}_2\|\leq\sqrt{T}$} and that $g_1,g_2$ vary in a bounded set of $\SL(m+1,\R)$. Fix $\e_1\in(0,1)$. Then
    \begin{align}\label{eq:first integral}
        \int_{\al_{q,T}}^{\al_{\de}(q,T,\e_1)} \frac{(R_{q,T}(t))^{\frac{m}{2}}}{t^{m+2}}dt =O\left(\frac{T^{m(m+1)-\e_1\frac{m+2}{2}}}{\|q\|^{m^2}}\right)   
    \end{align}
\end{lemma}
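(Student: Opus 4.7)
The plan is to bound the integrand on $[\al_{q,T}, \al_\de]$ by exploiting two facts: first, $R_{q,T}$ vanishes at $\al_{q,T}$ and is controlled at the right endpoint by monotonicity; second, the interval itself is short because $\de$ was chosen small.

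First, I would verify that $\al_\de < \theta_T$ holds uniformly for $T$ large, so that $R_{q,T}$ is monotone increasing on $[\al_{q,T}, \al_\de]$ by Lemma \ref{lem:roots lemma}. This follows from \eqref{range for a} together with $\|\text{g}_1 q \text{g}_2\| \leq \sqrt{T}$: both $\al_{q,T}^{1/m}$ and $\de$ are $O(1/\sqrt{T})$ uniformly on $B$, while $\theta_T$ grows linearly in $T$. Given monotonicity, the crude bound
\begin{equation*}
\int_{\al_{q,T}}^{\al_\de} \frac{R_{q,T}(t)^{m/2}}{t^{m+2}}\,dt \;\leq\; (\al_\de - \al_{q,T})\cdot\frac{R_{q,T}(\al_\de)^{m/2}}{\al_{q,T}^{m+2}}
\end{equation*}
reduces the problem to estimating three scalar quantities.

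Second, I would estimate $R_{q,T}(\al_\de)$. Using $R_{q,T}(\al_{q,T}) = 0$ to cancel the constant term and discarding the non-positive $\det(\text{g}_1\text{g}_2)^{-2}$ contribution,
\begin{equation*}
R_{q,T}(\al_\de) \;\leq\; \bigl(\al_\de^{2/m} - \al_{q,T}^{2/m}\bigr)\,T^2 \;=\; \bigl(2\al_{q,T}^{1/m}\de + \de^2\bigr)\,T^2,
\end{equation*}
where the identity $\al_\de^{1/m} = \al_{q,T}^{1/m}+\de$ was used. Substituting $\al_{q,T}^{1/m}\asymp \|\text{g}_1 q \text{g}_2\|/T$ from \eqref{range for a} and $\de = \|\text{g}_1 q \text{g}_2\|/T^{1+\e_1}$, the first term dominates and gives $R_{q,T}(\al_\de) \ll \|\text{g}_1 q \text{g}_2\|^2/T^{\e_1}$, so $R_{q,T}(\al_\de)^{m/2} \ll \|\text{g}_1 q \text{g}_2\|^m\,T^{-\e_1 m/2}$.

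Third, for the interval length I would apply the mean value theorem to $x \mapsto x^m$ on $[\al_{q,T}^{1/m},\al_{q,T}^{1/m}+\de]$, using $\de \ll \al_{q,T}^{1/m}$ in our range to obtain
\begin{equation*}
\al_\de - \al_{q,T} \;\leq\; m\bigl(\al_{q,T}^{1/m}+\de\bigr)^{m-1}\de \;\ll\; \al_{q,T}^{(m-1)/m}\,\de \;\asymp\; \frac{\|\text{g}_1 q \text{g}_2\|^m}{T^{m+\e_1}}.
\end{equation*}

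Finally, combining these bounds with $\al_{q,T}^{-(m+2)} \ll T^{m(m+2)}\|\text{g}_1 q \text{g}_2\|^{-m(m+2)}$ from \eqref{range for a}, and tallying exponents, the power of $\|\text{g}_1 q \text{g}_2\|$ becomes $m+m-m(m+2)=-m^2$ and the power of $T$ becomes $m(m+2)-(m+\e_1)-\e_1 m/2 = m(m+1)-\e_1(m+2)/2$. Replacing $\|\text{g}_1 q \text{g}_2\|^{-m^2}$ by $\|q\|^{-m^2}$ via \eqref{eq:lower bound on A_m}, at the cost of a constant depending only on $B$, yields the claim. The only step requiring care is confirming that the linear term $2\al_{q,T}^{1/m}\de\,T^2$ dominates $\de^2 T^2$ and that all $\asymp$ constants remain uniform as $g_1,g_2$ range over $B$; both are immediate from the definitions and \eqref{range for a}.
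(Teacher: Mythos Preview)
Your proof is correct and follows essentially the same strategy as the paper: bound the integral by (interval length)$\times$(max of integrand), then estimate $R_{q,T}(\al_\de)$ using $R_{q,T}(\al_{q,T})=0$, the interval length using $\al_\de^{1/m}=\al_{q,T}^{1/m}+\de$, and $\al_{q,T}^{-(m+2)}$ via \eqref{range for a}. The only cosmetic differences are that the paper substitutes $s=t^{1/m}$ and uses a first-order Taylor expansion with remainder to bound $R_{q,T}(\al_\de)$, whereas you obtain the same bound by a direct algebraic subtraction and handle the interval length via the mean value theorem; the exponents and the final estimate coincide.
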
    
\begin{proof}
Note since $\al_{q,T}$ is a root, we have that $R_{q,T}(t)$ is small in the range of the integral \eqref{eq:first integral}. In fact, we will show that it's small enough compared with $t^{-m+2}$ which is large in that range. In the following, to ease the reading, we will omit from the notations the dependencies on $q$, $T$ and $\e_1$. We substitute the variable $s=t^{\frac{1}{m}}$ in \eqref{eq:def of R_q,T}:
\begin{align*}
    f(s):=R_{q,T}(s)=-\det(\text{g}_1\text{g}_2)^{-2} s^{2m+2}+T^2s^2-\|\text{g}_1 q \text{g}_2\|^2.
\end{align*}
We apply Taylor expansion to $f(s)$ in the range $s\in[\al^{\frac{1}{m}},\al_\de^{\frac{1}{m}}]$
at $s=\al^{\frac{1}{m}}$ which gives
\begin{align}
    f(\al_\de^\frac{1}{m})&=f(\al^{\frac{1}{m}}+\de)=\underbrace{f(\al^{\frac{1}{m}})}_{=0}+f'(\al^{\frac{1}{m}})\de+\frac{f''(\xi)}{2}\de^2\nonumber\\&= [-(2m+2)\det(\text{g}_1\text{g}_2)^{-2} \al^\frac{2m+1}{m}+2T^2\al^\frac{1}{m}]\de+\frac{f''(\xi)}{2}\de^2\nonumber\\
    &\ll T^2\al^\frac{1}{m}\de+f''(\xi)\de^2\label{eq:bound by taylor}
\end{align}
where $\xi\in [\al^\frac{1}{m},\al_\de^{\frac{1}{m}}]$. In the last line we used that for bounded $g_1,g_2$, $|\det(\text{g}_1\text{g}_2)^{-1}|$ is bounded above and away from zero. We not bound the second derivative $f''(\xi)$ over $[\al^\frac{1}{m},\al_\de^{\frac{1}{m}}]$:
\begin{align}
    |f''(\xi)|
    =&|-(2m+2)(2m+1)\det(\text{g}_1\text{g}_2)^{-2} \xi^{2m}+2T^2|\nonumber\\
    \le & (2m+2)(2m+1)\det(\text{g}_1\text{g}_2)^{-2} \al_\de^\frac{2m}{m}+2T^2\nonumber \\
    \ll & T^2.
\label{eq:bound on sectond der}\end{align}Thus by \eqref{eq:bound by taylor} and \eqref{eq:bound on sectond der},
\begin{equation}
f(\al_\de^{\frac{1}{m}})\ll T^2\al_\de^\frac{1}{m}\de.\label{eq:bound for f(al_de 1 ove m)}
\end{equation}We note that for all large $T$ (independently of $q$ and $\e_1$) $f$ is monotonically increasing on the interval $(\al^\frac{1}{m},\al_\de^{\frac{1}{m}})$. To see this, recall that $\theta_T^\frac{1}{m}\sim T^\frac{1}{m}$ is a critical point for $f(s)$,  $f(s)$ is monotonically increasing in $(0,\theta_T^\frac{1}{m})$ (see Lemma \ref{lem:roots lemma} and it's proof), and $\al_\de=o(1)$. Then,  
\begin{align*}
    \int_{\al}^{\al_{\de}} \frac{(R_{q,T}(t))^{\frac{m}{2}}}{t^{m+2}}dt
    \le&\frac{1}{\al^{m+2}}\int_{\al}^{\al_{\de}} (R_{q,T}(t))^{\frac{m}{2}}dt\\
    =&\frac{1}{\al^{m+2}}\int_{\al^\frac{1}{m}}^{\al_{\de}^\frac{1}{m}} f(s)^{\frac{m}{2}}ms^{m-1}ds
      \end{align*}
   \begin{align*}
    \ll&\frac{1}{\al^{m+2}}(f(\al_\de^{\frac{1}{m}})^\frac{m}{2}\al_\de^{\frac{m-1}{m}})\de\tag{monotonicity of $f(s)$}\\
    \ll&\frac{1}{\al^{m+1+\frac{1}{m}}}f(\al_\de^{\frac{1}{m}})^\frac{m}{2}\de\tag{$\al_\de\ll\al$}
    \\
    \ll&\frac{\left\{T^2\al_\de^{\frac{1}{m}}\right\}^{\frac{m}{2}}}{\al^{m+1+\frac{1}{m}}}\cdot\delta^{\frac{m+2}{2}}\tag{by \eqref{eq:bound for f(al_de 1 ove m)}}\\
     \ll& \frac{\left\{T^2\al^{\frac{1}{m}}\right\}^{\frac{m}{2}}}{\al^{m+1+\frac{1}{m}}}\cdot\delta^{\frac{m+2}{2}} \tag{$\al_\de\ll\al$}
    \end{align*}
Using the bounds of $\al$ in \eqref{range for a} and by applying the definition of $\delta$ in \eqref{eq:definition of delta}, we get\begin{equation*}
\frac{\left\{T^2\al^{\frac{1}{m}}\right\}^{\frac{m}{2}}}{\al^{m+1+\frac{1}{m}}}\cdot\delta^{\frac{m+2}{2}}\ll\frac{T^{m(m+1)-\e_1\frac{m+2}{2}}}{\|\text{g}_1 q \text{g}_2\|^{m^2}},  
\end{equation*}
which concludes our proof.    
\end{proof}

We proceed to treat the middle integral, which will be the main term in \eqref{eq:partition of integral into four part}. 
\begin{lemma}\label{lem:integral from a_delta to c}
Let \emph{$\|\text{g}_1 q \text{g}_2\|\leq\sqrt{T}$}, and assume that $g_1,g_2$ vary in a bounded set of $\SL(m+1,\R)$. Suppose that $0<\e_1<2\e_2<1$, and let \begin{equation*}
\e:=\min\{\e_1,2\e_2-\e_1,1+\frac{1-2\e_2}{m+1}\}.
\end{equation*}Then \emph{\begin{align}
   &\int_{\al_{\de}(q,T,\e_1)}^{\la(q,T,\e_2)}\frac{(R_{q,T}(t))^{\frac{m}{2}}}{t^{m+2}}dt
=\frac{m\Ga(\frac{m}{2}+1)\Ga(\frac{m^2}{2})}{2\Ga(\frac{m^2}{2}+\frac{m}{2}+1)}
\cdot \frac{T^{m(m+1)}}{\|\text{g}_1 q \text{g}_2\|^{m^2}}+O\left(\frac{T^{m(m+1)-\e}}{\|q\|^{m^2}}\right). \label{eq:main term estimate for integral in a delta to c}
\end{align}}
\end{lemma}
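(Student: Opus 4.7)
The plan is to reduce the integral to a standard Beta function via a two-step change of variables, and then to quantify three error contributions by a careful truncation analysis. Write $R^{*}(t):=t^{2/m}T^{2}-\|\text{g}_{1}q\text{g}_{2}\|^{2}$ for the simpler ``model'' of $R_{q,T}(t)$, so that $R_{q,T}(t)=R^{*}(t)-\det(\text{g}_{1}\text{g}_{2})^{-2}t^{2/m+2}$. Substituting first $s=t^{2/m}$ and then $u=\|\text{g}_{1}q\text{g}_{2}\|^{2}/(sT^{2})$ transforms the original integral into
\begin{equation*}
\frac{mT^{m(m+1)}}{2\|\text{g}_{1}q\text{g}_{2}\|^{m^{2}}}\int_{u_{\la}}^{u_{\al_{\de}}}(1-u)^{m/2}u^{m^{2}/2-1}\bigl(1-\chi(u)\bigr)^{m/2}\,du,
\end{equation*}
where $\chi(u):=\det(\text{g}_{1}\text{g}_{2})^{-2}\|\text{g}_{1}q\text{g}_{2}\|^{2m}\bigl/\bigl((1-u)u^{m}T^{2(m+1)}\bigr)$ encodes the correction from the removed term. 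The boundary correspondence $u=1\leftrightarrow t=(\|\text{g}_{1}q\text{g}_{2}\|/T)^{m}$ and $u=0\leftrightarrow t=\infty$ makes it clear that the model integrand extends naturally to the full interval $(0,1)$.

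The main term is then produced by the Beta-function identity
\begin{equation*}
\int_{0}^{1}(1-u)^{m/2}u^{m^{2}/2-1}\,du=B\bigl(\tfrac{m^{2}}{2},\tfrac{m}{2}+1\bigr)=\frac{\Ga(m^{2}/2)\Ga(m/2+1)}{\Ga(m^{2}/2+m/2+1)},
\end{equation*}
multiplied by the prefactor $mT^{m(m+1)}/(2\|\text{g}_{1}q\text{g}_{2}\|^{m^{2}})$, which is exactly the leading term in \eqref{eq:main term estimate for integral in a delta to c}. What remains is to control three error terms: (a) the missing portion of the model integral on $(u_{\al_{\de}},1)$; (b) the missing portion on $(0,u_{\la})$; and (c) the defect $\int_{u_{\la}}^{u_{\al_{\de}}}f(u)\,\bigl[\,1-(1-\chi(u))^{m/2}\,\bigr]\,du$, where $f(u):=(1-u)^{m/2}u^{m^{2}/2-1}$.

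Each error is handled as follows. For (a), a Taylor expansion using \eqref{eq:defintion of a_delta} and the estimate \eqref{eq:asymp for a when A small then sqrt T} gives $1-u_{\al_{\de}}\asymp T^{-\e_{1}}$, so $\int_{u_{\al_{\de}}}^{1}f(u)\,du\ll(1-u_{\al_{\de}})^{(m+2)/2}$. For (b), one computes $u_{\la}\ll T^{-(m+2-2\e_{2})/(m+1)}$ directly from \eqref{eq:definition of c} and the hypothesis $\|\text{g}_{1}q\text{g}_{2}\|\le\sqrt{T}$, giving $\int_{0}^{u_{\la}}f(u)\,du\ll u_{\la}^{m^{2}/2}$. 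For (c), one expands $(1-\chi)^{m/2}=1-(m/2)\chi+O(\chi^{2})$ and estimates $\int f\chi$ using the uniform smallness of $\chi$ on the range of integration; the factor $1/((1-u)u^{m})$ in $\chi$ interacts with both endpoint scales, so this cross-term is bounded using both estimates above.

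The main technical obstacle is the bookkeeping at the end: each raw error naturally appears with a denominator $\|\text{g}_{1}q\text{g}_{2}\|^{\alpha}$ for some $\alpha<m^{2}$, and to present every bound uniformly in the target form $T^{m(m+1)-\e}/\|q\|^{m^{2}}$ one must repeatedly trade surplus factors $\|\text{g}_{1}q\text{g}_{2}\|^{m^{2}-\alpha}\le T^{(m^{2}-\alpha)/2}$ via the hypothesis. Tracking these trade-offs through the three error sources produces three candidate exponents which, after taking the minimum, yield exactly the claimed $\e=\min\{\e_{1},\,2\e_{2}-\e_{1},\,1+(1-2\e_{2})/(m+1)\}$.
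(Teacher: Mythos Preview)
Your approach is essentially identical to the paper's: the same substitution $u=\|\text{g}_1q\text{g}_2\|^2/(t^{2/m}T^2)$ reducing to a Beta integral, and the same three error sources (the two endpoint truncations contributing $\e_1$ and $1+(1-2\e_2)/(m+1)$, and the correction factor $(1-\chi)^{m/2}$ contributing $2\e_2-\e_1$ via the uniform bound on $\chi$); the only organizational difference is that the paper establishes $\chi\ll T^{-(2\e_2-\e_1)}$ in the variable $t$ before changing variables, whereas you change variables first. One small remark: your final ``trading'' paragraph is unnecessary, because the prefactor already carries $\|\text{g}_1q\text{g}_2\|^{m^2}\asymp\|q\|^{m^2}$ in the denominator (by \eqref{eq:lower bound on A_m}), so the crude endpoint bounds $|1-u_{\al_\de}|\ll T^{-\e_1}$ and $u_\la\ll T^{-(m+2-2\e_2)/(m+1)}$ together with the uniform $\chi$-bound immediately yield the three exponents in the stated minimum.
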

\begin{proof}
%Suppose that $\|\text{g}_1 q \text{g}_2\|\leq\sqrt{T}$, and assume that $g_1,g_2$ vary in a bounded set. In particular, recall that $|\det(\text{g}_1\text{g}_2)^{-1}|$ will be bounded above. 
To ease the reading, we will omit from some notations the dependencies on $q$, $T$ and $\e_1,\e_2$. 
We rewrite our integral \eqref{eq:main term estimate for integral in a delta to c} as 
\begin{align*}
\int_{\al_{\de}}^{\la}\frac{(-\det(\text{g}_1\text{g}_2)^{-2} t^{\frac{2}{m}+2}+t^{\frac{2}{m}}T^2-\|\text{g}_1 q \text{g}_2\|^2)^{\frac{m}{2}}}{t^{m+2}}dt&\\
=\int_{\al_\de}^\la\frac{(t^{\frac{2}{m}}T^2-\|\text{g}_1 q \text{g}_2\|^2)^{\frac{m}{2}}}{t^{m+2}}
& \left(1-\frac{\det(\text{g}_1\text{g}_2)^{-2}t^{\frac{2}{m}+2}}{t^{\frac{2}{m}}T^2-\|\text{g}_1 q \text{g}_2\|^2} \right)^{\frac{m}{2}}dt.    
\end{align*}
We now show that in the range $t\in(\al_\de,\la)$ it holds that 
\begin{equation}\label{much less than 1}
   \frac{\det(\text{g}_1\text{g}_2)^{-2}t^{\frac{2}{m}+2}}{t^{\frac{2}{m}}T^2-\|\text{g}_1 q \text{g}_2\|^2}= O\left(\frac{1}{T^{2\e_2-\e_1}}\right).
\end{equation}
Recall that we assume here $\|\text{g}_1 q \text{g}_2\|\leq\sqrt{T}$, and as $g_1,g_2$ vary in a bounded set, $|\det(\text{g}_1\text{g}_2)^{-1}|$ will be bounded above. We have for $t\in(\al_\de,\la)$
\begin{align*}
    \frac{\det(\text{g}_1\text{g}_2)^{-2}t^{\frac{2}{m}+2}}{t^{\frac{2}{m}}T^2-\|\text{g}_1 q \text{g}_2\|^2}
    \ll &  \frac{\la^{2\frac{m+1}{m}}}{(\al^\frac{1}{m}+\de)^2T^2-\|\text{g}_1 g \text{g}_2\|^2}  \\
    \ll & \frac{\la^{2\frac{m+1}{m}}}{O(T^{-m-3})+2\al^\frac{1}{m}\de T^2+\de^2 T^2} \tag{by \eqref{eq:asymp for a when A small then sqrt T}}\\
    \ll &\frac{\la^{2\frac{m+1}{m}}}{\al^\frac{1}{m}\de T^2} \\
    \leq & \frac{(\nicefrac{\|\text{g}_1 q \text{g}_2\|}{T^{\e_2}})^2}{ \frac{\|\text{g}_1 q \text{g}_2\|}{T}\frac{\|\text{g}_1 q \text{g}_2\|}{T^{1+\e_1}}T^2}=O\left(\frac{1}{T^{2\e_2-\e_1}}\right)\tag{using \eqref{range for a} for bounding $\al$}.
\end{align*} 

Now in view of the estimate $(1+x)^{\alpha}\sim 1+\alpha x$ as $x\to0$, we conclude that 
\begin{align}
 \int_{\al_\de}^\la \frac{(t^{\frac{2}{m}}T^2-\|\text{g}_1 q \text{g}_2\|^2)^{\frac{m}{2}}}{t^{m+2}}
\left(1-\frac{\det(\text{g}_1\text{g}_2)^{-2}t^{\frac{2}{m}+2}}{t^{\frac{2}{m}}T^2-\|\text{g}_1 q \text{g}_2\|^2} \right)^{\frac{m}{2}}dt\nonumber\\=\left(\int_{\al_\de}^\la \frac{(t^{\frac{2}{m}}T^2-\|\text{g}_1 q \text{g}_2\|^2)^{\frac{m}{2}}}{t^{m+2}}
dt\right)& \left(1+O\left(\frac{1}{T^{2\e_2-\e_1}}\right)\right). \label{eq:estimate for the fraction in the integral a_de to c}  
\end{align}
We shall now estimate $\int_{\al_\de}^\la \frac{(t^{\frac{2}{m}}T^2-\|\text{g}_1 q \text{g}_2\|^2)^{\frac{m}{2}}}{t^{m+2}}
dt$:
\begin{align}
&\int_{\al_\de}^\la \frac{(t^{\frac{2}{m}}T^2-\|\text{g}_1 q \text{g}_2\|^2)^{\frac{m}{2}}}{t^{m+2}}dt\nonumber\\
=&\int_{\al_\de}^\la\frac{T^m}{t^{m+1}}\left(1-\frac{\|\text{g}_1 q \text{g}_2\|^2}{T^2 t^{\frac{2}{m}}}\right)^{\frac{m}{2}}dt \nonumber\\
=&\frac{m}{2} \frac{T^{m(m+1)}}{\|\text{g}_1 q \text{g}_2\|^{m^2}}\int^{\nicefrac{\|\text{g}_1 q \text{g}_2\|^2}{T^2 \al_{\de}^{\frac{2}{m}}}}_{\nicefrac{\|\text{g}_1 q \text{g}_2\|^2}{T^2 \la^{\frac{2}{m}}}}
u^{\frac{m^2}{2}-1}(1-u)^{\frac{m}{2}}du. \label{upper and lower limits of the integral}
\end{align}It remains to prove the estimate 
\begin{align}
 \int^{\nicefrac{\|\text{g}_1 q \text{g}_2\|^2}{T^2 \al_{\de}^{\frac{2}{m}}}}_{\nicefrac{\|\text{g}_1 q \text{g}_2\|^2}{T^2 \la^{\frac{2}{m}}}}
u^{\frac{m^2}{2}-1}(1-u)^{\frac{m}{2}}du=&\int^{1}_{0}
u^{\frac{m^2}{2}-1}(1-u)^{\frac{m}{2}}du+O\left(\frac{1}{T^{\e_1}}\right)+O\left(\frac{1}{T^{1+\frac{1-2\e_2}{m+1}}}\right)\nonumber\\
=&\frac{\Ga(\frac{m}{2}+1)\Ga(\frac{m^2}{2})}{\Ga(\frac{m^2}{2}+\frac{m}{2}+1)}+O\left(\frac{1}{T^{\e}}\right)
.\label{eq:estimate of the diffence for the beta integral}
\end{align} where in the last equality we used the classical formula for the beta function. Notice that by  \eqref{eq:estimate for the fraction in the integral a_de to c}, by \eqref{upper and lower limits of the integral} and by \eqref{eq:estimate of the diffence for the beta integral} the proof is complete.

To prove \eqref{eq:estimate of the diffence for the beta integral} it suffices to estimate the differences between the endpoints of the corresponding integral, namely it suffices to estimate $\left|{1}-{\frac{\|\text{g}_1 q \text{g}_2\|^2}{T^2 \al_{\de}^{\frac{2}{m}}}}\right|$ and $\left|\frac{\|\text{g}_1 q \text{g}_2\|^2}{T^2\la^{\frac{2}{m}}}-0\right|=\frac{\|\text{g}_1 q \text{g}_2\|^2}{T^2\la^{\frac{2}{m}}}.$ 
We have
\begin{align*}
\left|{1}-{\frac{\|\text{g}_1 q \text{g}_2\|^2}{T^2 \al_{\de}^{\frac{2}{m}}}}\right|=&\left|\frac{T^2\al_\de^\frac{2}{m}-\|\text{g}_1 q \text{g}_2\|^2}{T^2\al_\de^\frac{2}{m}}\right|\\
= &\left|\frac{(T^2\alpha^\frac{2}{m}-\|\text{g}_1 q \text{g}_2\|^2)+(2\al^\frac{1}{m}\de+\de^2)T^2}{T^2\al_\de^\frac{2}{m}}\right|\tag{using definition of $\al_\de$, see \eqref{eq:defintion of a_delta}}\\
= &\frac{\left|O(\nicefrac{1}{T^{m+1}})+(2\al^\frac{1}{m}\de+\de^2)T^2\right|}{T^2\al_\de^\frac{2}{m}}\tag{see  \eqref{eq:asymp for a when A small then sqrt T}}\\
\leq & \frac{\left|O(\nicefrac{1}{T^{m+1}})+(2\al^\frac{1}{m}\de+\de^2)T^2\right|}{\left|T^2\al^\frac{2}{m}\right|}\\
= & O\left(\frac{1}{T^{\e_1}}\right)\tag{since $T^2\al^\frac{2}{m}\asymp \|\text{g}_1q\text{g}_2\|^2$ and $(2\al^\frac{1}{m}\de+\de^2)T^2=O(\frac{\|\text{g}_1q\text{g}_2\|^2}{T^{\e_1}})$},   
\end{align*}
\iffalse
\begin{align}
\left|{1}-{\frac{\|\text{g}_1 q \text{g}_2\|^2}{T^2 \al_{\de}^{\frac{2}{m}}}}\right|=&\left|{1}-\frac{\|\text{g}_1 q \text{g}_2\|^2}{T^2a^{\frac{2}{m}}(1+a^{-\frac{1}{m}}\de)^2}\right|\\
= &\left|{1}-\frac{\|\text{g}_1 q \text{g}_2\|^2}{T^2\left(\frac{\|\text{g}_1 q \text{g}_2\|^2}{T^2}+O(\frac{1}{T^{m+1.5}})\right)(1+a^{-\frac{1}{m}}\de)^2}\right|\\
= & \left|{1}-\frac{\|\text{g}_1 q \text{g}_2\|^2}{T^2\left(\frac{\|\text{g}_1 q \text{g}_2\|^2}{T^2}+O(\frac{1}{T^{m+1.5}})\right)}(1+O(a^{-\frac{1}{m}}\de))\right|\\
= & \left|{1}-\left(1+O(\frac{1}{\|\text{g}_1 q \text{g}_2\|^2T^{m-0.5}})\right)\left(1+O(\frac{T}{\|\text{g}_1 q \text{g}_2\|}\frac{\|\text{g}_1 q \text{g}_2\|}{T^{1+\e_1}})\right)\right|\\
= & O\left(\frac{1}{T^{\e_1}}\right).   
\end{align}
\fi
and finally
\begin{equation*}
  \frac{\|\text{g}_1 q \text{g}_2\|^2}{T^2\la^{\frac{2}{m}}}=\frac{\|\text{g}_1 q \text{g}_2\|^2}{T^2\left(\nicefrac{\|\text{g}_1 q \text{g}_2\|}{T^{\e_2}}\right)^{\frac {2}{m+1}}}=\frac{\|\text{g}_1 q \text{g}_2\|^{2-\frac{2}{m+1}}}{T^{2-\frac{2\e_2}{m+1}}}\underbrace{=}_{\|\text{g}_1 q \text{g}_2\|\leq\sqrt{T}}O\left(\frac{1}{T^{1+\frac{1-2\e_2}{m+1}}}\right).  
\end{equation*}
\end{proof}

We now turn to our estimate for the last integral.
\begin{lemma}\label{lem:integral from c to b}
Let \emph{$\|\text{g}_1 q \text{g}_2\|\leq\sqrt{T}$}, and assume that $g_1,g_2$ vary in a bounded set of $\SL(m+1,\R)$. Fix $\e_2\in(0,1)$. Then \begin{equation*}
\int_{\la(q,T,\e_2)}^{\be_{q,T}} \frac{(R_{q,T}(t))^{\frac{m}{2}}}{t^{m+2}}dt=T^{m+\e_2\frac{ m^2}{m+1}}O\left(\frac{1}{\|q\|^{\frac{m^2}{m+1}}}\right).      
 \end{equation*}
\end{lemma}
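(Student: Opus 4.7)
The plan is to mimic the crude upper bound used in Lemma \ref{lem:estimate of V skewed balls as power of T and fract of A_m to power m^2}, but with the lower limit of integration raised from $\al_{q,T}$ to $\la(q,T,\e_2)$. The key point is that $\la$ is substantially larger than $\al_{q,T}$ when $\|\text{g}_1 q \text{g}_2\|$ is small compared to $T$, and since the integrand decays like $t^{-m-1}$ after we absorb the factor $T^m$ from the numerator, raising the lower limit gives a proportionally smaller bound. Because we only need an $O$-type estimate (not a main-term asymptotic), we can afford to throw away the negative quadratic-in-$t^{2/m}$ term in $R_{q,T}$, and no delicate Taylor expansion is required.

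First, I would bound the integrand by discarding the negative contribution and the constant term in $R_{q,T}(t)$:
\begin{equation*}
R_{q,T}(t) = -\det(\text{g}_1\text{g}_2)^{-2} t^{\frac{2}{m}+2} + t^{\frac{2}{m}}T^2 - \|\text{g}_1 q \text{g}_2\|^2 \le t^{\frac{2}{m}}T^2,
\end{equation*}
so $(R_{q,T}(t))^{m/2}\le t\,T^m$. The integral is then dominated by
\begin{equation*}
\int_{\la(q,T,\e_2)}^{\be_{q,T}} \frac{T^m}{t^{m+1}}\,dt \le \frac{T^m}{m\,\la(q,T,\e_2)^m}.
\end{equation*}

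Next, I would substitute the definition $\la(q,T,\e_2) = \bigl(\|\text{g}_1 q \text{g}_2\|/T^{\e_2}\bigr)^{m/(m+1)}$ from \eqref{eq:definition of c} to rewrite
\begin{equation*}
\frac{T^m}{m\,\la(q,T,\e_2)^m} = \frac{1}{m}\cdot\frac{T^{m+\e_2\frac{m^2}{m+1}}}{\|\text{g}_1 q \text{g}_2\|^{\frac{m^2}{m+1}}}.
\end{equation*}
Finally, invoking the uniform lower bound $\|\text{g}_1 q \text{g}_2\| \geq C^-\|q\|$ from \eqref{eq:lower bound on A_m}, which holds because $g_1,g_2$ vary in a bounded set, I would replace $\|\text{g}_1 q \text{g}_2\|^{-m^2/(m+1)}$ by $O(\|q\|^{-m^2/(m+1)})$, yielding the claimed estimate. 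Since the argument is a direct monotonicity-plus-substitution, there is no real obstacle here — this lemma is substantially easier than the preceding Lemmas \ref{lemma for integral from a to a delta} and \ref{lem:integral from a_delta to c}, and the only subtlety is ensuring that the trivial bound on $R_{q,T}$ is indeed sufficient to produce an error term of strictly lower order than the main term $T^{m(m+1)}/\|\text{g}_1 q \text{g}_2\|^{m^2}$ once it gets summed over $q\in\De$ via Lemma \ref{lemma on the integral and summation in a ball}.
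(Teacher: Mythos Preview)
Your proposal is correct and follows essentially the same route as the paper: bound $R_{q,T}(t)\le t^{2/m}T^2$, integrate $T^m/t^{m+1}$ from $\la$ upward, and substitute the definition of $\la$. The only minor difference is that you spell out the final passage from $\|\text{g}_1q\text{g}_2\|^{-m^2/(m+1)}$ to $O(\|q\|^{-m^2/(m+1)})$ via \eqref{eq:lower bound on A_m}, which the paper leaves implicit.
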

\begin{proof}
Again, to ease the reading, we will omit from the notations the dependencies on $q$,$T$ and $\e_2$. 
We have
    \begin{align*}
&\int_\la^\be \frac{(-\det(\text{g}_1\text{g}_2)^{-2} t^{\frac{2}{m}+2}+t^{\frac{2}{m}}T^2-\|\text{g}_1 q \text{g}_2\|^2)^{\frac{m}{2}}}{t^{m+2}}dt\\
\le &\int_\la^\be \frac{(t^{\frac{2}{m}}T^2)^{\frac{m}{2}}}{t^{m+2}} dt =\int_\la^\be \frac{T^m}{t^{m+1}}dt \\
\le & \frac{T^m}{m\la^m}=\frac{T^m}{m(\|\text{g}_1 q \text{g}_2\|/T^{\e_2})^{\frac{m^2}{m+1}}}=\frac{T^{m+\frac{\e_2 m^2}{m+1}}}{m\|\text{g}_1 q \text{g}_2\|^{\frac{m^2}{m+1}}}
\end{align*} 
\end{proof}

\section{Equidistribution along skewed $H$-balls}\label{sec:proof of equid}
Recall $G=\SL(m+1,\R)$, and $\Ga\leq G$ is a lattice. The second important ingredient required for the method of \cite{Gorodnik2004DistributionOL} are the following ergodic theorems.  For  $x\in X,g_1, g_2\in G$  and $T>0$, we consider the measure defined by the integral 
\begin{equation}\label{definition of measure funtional}
    \mu_{T,g_1,g_2}(F):=\frac{1}{\mu \left(H_T[g_1,g_2] \right)}\int_{H_T[g_1,g_2]}F(h^{-1}x)d\mu(h),~F\in C_c(G/\Ga).
\end{equation}
%The following properties follow immediately from \eqref{measure of H_T}:
%\begin{proof}
%It follows from Proposition \ref{computation for H and V} that 
%\begin{equation}\label{definition and computation of alpha}
 %   \omega(g_1^{-1},g_2):=\lim_{T\to \infty}\frac{\mu \left(H_T[g_1,g_2] \right)}{\mu \left(H_T \right)}=\frac{1}{\det(\text{g}_1)^{-m} |\det(\text{g}_2)|}\frac{\sum_{q\in \Delta}\frac{1}{\|\text{g}_1 q \text{g}_2\|^{m^2}}}{\sum_{q\in \Delta}\frac{1}{\|q \|^{m^2}}}.
%\end{equation}
%\end{proof}

\begin{theorem}\label{The $G$-invariance of limiting measure}
Let $\mu_X$ be the  $G$-invariant probability measure on $X=G/\Ga$. For all $x\in X,g_1, g_2\in G$  and all $F\in C_c(X)$
\begin{equation}
    \lim_{T\to \infty} \mu_{T,g_1,g_2}(F)=\mu_X(F).
\end{equation}
\end{theorem}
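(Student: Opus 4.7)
The plan is to show that every weak-$*$ accumulation point $\nu$ of $(\mu_{T, g_1, g_2})_{T > 0}$, viewed as probability measures on the one-point compactification $X^+ := X \cup \{\infty\}$, equals the $G$-invariant probability $\mu_X$. A standard subsequence argument then yields the theorem.

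The first step is to show that the restriction $\nu|_X$ is invariant under the unipotent subgroup $U$ of \eqref{eq:def of the subgroups UADelta}. The explicit description of the left Haar measure on $H$ given earlier in this section decomposes the integration over $H_T[g_1, g_2]$ into an inner Lebesgue integral in $v \in \R^m$ over the ellipsoid $D_{q, T, t}$ of \eqref{eq:def of D}, followed by an outer integral over $(t, q) \in \R_{>0} \times \Delta$. Replacing $F$ by $F(u_{v_0} \cdot)$ translates the inner integration variable by a vector of norm controlled by $t$ and $q^{-1}$; translation invariance of the Lebesgue measure on $\R^m$ reduces the resulting difference to integration over the symmetric difference of two translates of $D_{q, T, t}$, which is concentrated near $\partial D_{q, T, t}$. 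The bulk of the mass comes from slices where $D_{q, T, t}$ has diameter tending to infinity, so the relative boundary contribution is negligible, and the uniform volume growth of Corollary \ref{Cor:D1} completes the verification of $U$-invariance of $\nu|_X$.

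The second step establishes non-escape of mass, namely $\nu(\{\infty\}) = 0$. Decomposing $H = \bigsqcup_{q \in \Delta} V \tilde q$ via the change of variable $h = v \tilde q$ in \eqref{definition of measure funtional} yields
\begin{equation*}
\mu_{T, g_1, g_2}(F) = \sum_{q \in \Delta} \frac{\mu(V_{q, T}[g_1, g_2])}{\mu(H_T[g_1, g_2])} \cdot \frac{1}{\mu(V_T[g_1, \tilde q g_2])} \int_{V_T[g_1, \tilde q g_2]} F(v^{-1} \tilde q^{-1} x) \, d\mu_V(v).
\end{equation*}
For each $q$ one applies a quantitative non-divergence estimate of Dani-Margulis type for the unipotent subgroup $U \subset V$ (together with a Margulis-style thickening in the $A$-direction) to bound, uniformly in the starting point $\tilde q^{-1} x$, the fraction of the $V$-skewed ball whose orbit enters the cusp $X \setminus X_M$. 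Weighting these pointwise bounds by $\mu(V_{q, T})/\mu(H_T)$ and using the per-component volume estimate $\mu(V_{q, T}[g_1, g_2]) \ll T^{m(m+1)}/\|q\|^{m^2}$ of Lemma \ref{lem:estimate of V skewed balls as power of T and fract of A_m to power m^2}, together with the convergence of $\sum_q \|q\|^{-m^2}$ from Lemma \ref{lemma on the integral and summation in a ball}, yields uniform control of the cusp mass.

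Finally, with $\nu|_X$ an honest $U$-invariant probability, one invokes Ratner's measure classification and Shah's dichotomy theorem: either $\nu|_X = \mu_X$, or each $U$-ergodic component of $\nu|_X$ is supported on a proper closed homogeneous subvariety $L x' \Gamma$ with $U \subsetneq L \subsetneq G$. The latter alternative is excluded by the expansion-of-norms lemma for the unipotent subgroup $U$ acting on the exterior-power representations cutting out any candidate intermediate $L$; combined with the volume estimates of Section \ref{sec:Estimate of the Haar measure growth of skewed balls in H}, this shows that the measures $\mu_{T, g_1, g_2}$ cannot accumulate on any such proper subvariety, forcing $\nu = \mu_X$. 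The main obstacle is controlling both the non-escape of mass and the exclusion of the bad alternative in the Shah dichotomy uniformly across the infinitely many components $V \tilde q$: the classical Dani-Margulis and expansion-of-norms arguments are well-suited to a single connected component, and synthesizing them with the weighted sum over $q \in \Delta$, keeping tail contributions from large-norm $q$ uniformly negligible, is the principal technical novelty required in this infinite-component setting.
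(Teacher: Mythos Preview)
Your overall outline (establish $U$-invariance of any weak-$*$ limit, rule out escape of mass, then invoke Ratner and Shah to exclude concentration on proper closed orbits) matches the paper's strategy, but there are two substantive divergences worth flagging.

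\textbf{Structural difference.} The paper does not carry the infinite sum over $q\in\Delta$ through every step. Instead it first proves the reduction ``Theorem \ref{thm:equidisitribution along skewed balls of the connected component} $\Rightarrow$ Theorem \ref{The $G$-invariance of limiting measure}'' via the convex combination
\[
\mu_{T,g_1,g_2}(F)=\sum_{q\in\Delta}c_{q,T}\,\mu^\circ_{T,g_1,\tilde q g_2}(L_{\tilde q^{-1}}F),\qquad c_{q,T}:=\frac{\mu(V_{q,T}[g_1,g_2])}{\mu(H_T[g_1,g_2])},
\]
using only the domination $c_{q,T}\le C\|q\|^{-m^2}$ (Lemma \ref{lem:estimate of V skewed balls as power of T and fract of A_m to power m^2}) and the summability from Lemma \ref{lemma on the integral and summation in a ball}. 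All the hard work ($U$-invariance, non-escape, singular set) is then done once, for the connected component $V$, with \emph{no} uniformity in $q$ required. Your proposal to keep the $q$-sum alive through the non-escape and Shah-dichotomy steps is not wrong in principle, but it creates exactly the ``uniformity across infinitely many components'' burden you identify at the end --- a burden the paper sidesteps entirely.

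\textbf{A genuine gap.} Two of your heuristics are in direct tension with what actually happens. First, you write that ``the bulk of the mass comes from slices where $D_{q,T,t}$ has diameter tending to infinity.'' In fact the dominant range is $t\in(\alpha_\delta,\lambda)$ where the ellipsoid radius $\sqrt{R_T(t)}$ is of order $T^{-\epsilon_1/2}\to 0$ (see Lemma \ref{lem:the domain Dt,T has a ball of sufficiently large radius in it}); the $U$-invariance works because the conjugated displacement $t^{(m+1)/m}v_0$ shrinks \emph{faster} than the surface-to-volume ratio, not because the domain is large. Second, and more seriously, you propose a ``Margulis-style thickening in the $A$-direction'' for non-divergence. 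The paper explicitly remarks that the Margulis thickening/mixing approach fails here precisely because the ellipsoids $D_{T,t}$ shrink as $t\to 0$. What the paper does instead --- for \emph{both} non-escape of mass and the singular-set estimate --- is to treat $v\mapsto a_t^{-1}u_v^{-1}g_0$ as a polynomial map and apply Shah's dichotomy (Theorems \ref{Shah dichotomy theorem} and \ref{second dichotomy theorem of Shah}). Ruling out the bad alternative in each dichotomy requires the expansion-in-representations Lemma \ref{our lemma expansion inequality for applying shah dichotomy}, whose proof decomposes $a_t^{-1}u_v^{-1}$ into a product of $m$ copies of $\SL(2,\R)$ and exploits the explicit structure of $\SL(2,\R)$-irreducibles. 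This lemma, and the verification (Lemma \ref{lem:the domain Dt,T has a ball of sufficiently large radius in it}) that the shrinking ellipsoid still contains a ball of radius $t^{(1-\chi)/m}$ large enough to trigger the expansion, are the technical heart of the argument; your sketch invokes an ``expansion-of-norms lemma'' only at the final Ratner step and does not supply it where it is actually needed.
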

\iffalse
\textbf{TODO: explain more in detail hwo the ergodic theorem is proved, that it generalizes to other norms, and that it's not possible to use Margulis's banana trick}The above ergodic theorem is proven in Section \ref{sec:proof of equid}. In an overview, we will establish unipotent invariance of the limiting measure, which opens the way to apply the celebrated results of Ratner (see e.g. \cite{Ratner91a}), combined with the results of Shah (see e.g. \cite{Shah1994LimitDO}) on the behaviour of polynomial orbits. The main technical result we prove is a certain divergence of polynomial maps in representation space, where the domain of the maps "shrinks" (see Lemma \ref{our lemma expansion inequality for applying shah dichotomy}).
\fi
We start by reducing Theorem \ref{The $G$-invariance of limiting measure} to the  equidistribution statement along skewed balls of the connected component of $H$. Similarly to the above, consider for $x\in X,g_1,g_2\in G$ and $T>0$ the measures
\begin{equation}
   \label{eq:measures along the connected component} \mu^\circ_{T,g_1,g_2}(F):=\frac{1}{\mu(V_T[g_1,g_2])}\int_{V_T[g_1,g_2]}F(a_t^{-1}u_{v}^{-1}x)dv\frac{1}{t^{m+2}}dt,
\end{equation}
where $V_T[g_1,g_2]$ is the skew-ball of the identity component of $H$.
\begin{theorem}\label{thm:equidisitribution along skewed balls of the connected component}
   For all $x\in X, g_1,g_2\in G$ and all $F\in C_c(X)$ it holds that $$\lim_{T\to\infty}\mu_{T,g_1,g_2}^\circ(F)=\mu_X(F).$$
\end{theorem}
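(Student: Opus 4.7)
The plan is to show that any weak-$*$ subsequential limit $\mu_\infty$ of $\mu^{\circ}_{T,g_1,g_2}$ coincides with the Haar measure $\mu_X$, using the three-step scheme of unipotent invariance, non-escape of mass, and Ratner-style classification.

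First I would establish $U$-invariance of $\mu_\infty$, where $U = \{u_v : v \in \mathbb{R}^m\}$. Fix $v_0 \in \mathbb{R}^m$ and $F \in C_c(X)$. Since $U$ is a unimodular normal subgroup of $V$, right translation by $u_{v_0}^{-1}$ preserves the left Haar measure on $V$, and the change of variables $h = h' u_{v_0}$ together with the identity $V_T[g_1,g_2]\cdot u_{v_0}^{-1} = V_T[g_1, u_{v_0}g_2]$ yields
\[
\mu^{\circ}_{T,g_1,g_2}(F\circ u_{v_0}) - \mu^{\circ}_{T,g_1,g_2}(F) = \frac{1}{\mu(V_T[g_1,g_2])}\int\bigl(\mathbf{1}_{V_T[g_1, u_{v_0}g_2]} - \mathbf{1}_{V_T[g_1, g_2]}\bigr)F(h^{-1}x)\,d\mu(h).
\]
A direct computation shows that the Iwasawa decomposition of $u_{v_0}g_2$ retains the same $\text{g}_2$-block as $g_2$, so by Proposition \ref{computation for H and V} both skew-balls share the leading-order volume $C(m,g_1,g_2)T^{m(m+1)}/\|\text{g}_1\text{g}_2\|^{m^2}$, while their ellipsoidal slices $D_{I,T,t}$ from \eqref{eq:def of D} are mutual translates by $-v_0\, t^{(m+1)/m}$ in the $v$-coordinate. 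Estimating the symmetric difference of translated ellipsoids by (surface area)$\times$(shift distance) and integrating against $dt/t^{m+2}$ using $R_{I,T}(t)\le T^2 t^{2/m}$ and Lemma \ref{lem:roots lemma} yields $\mu(V_T[g_1,g_2] \triangle V_T[g_1,u_{v_0}g_2]) \ll |v_0|\, T^{m^2-1}$, which is $o(\mu(V_T[g_1,g_2]))$, whence $\mu_\infty(F\circ u_{v_0}) = \mu_\infty(F)$.

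For non-escape of mass I would apply the Dani--Margulis (or Kleinbock--Margulis) quantitative non-divergence estimates to the polynomial trajectories $v \mapsto u_v^{-1}(a_t^{-1}x)$ on each slice $D_{I,T,t}$, with uniformity in $t \in (\alpha_{I,T},\beta_{I,T})$ provided by the explicit semi-axis control from Section \ref{sec:Estimate of the Haar measure growth of skewed balls in H}; this gives $\mu_\infty(X) = 1$. Finally, to identify $\mu_\infty$ with $\mu_X$, note that $U$ is the unipotent radical of the maximal parabolic stabilizing the $e_{m+1}$-direction, and in particular is horospherical for $A$. Ratner's measure classification theorem decomposes $\mu_\infty$ into algebraic ergodic components supported on closed orbits of closed subgroups $L\supseteq U$, and the Mozes--Shah linearization theorem rules out the proper choices of $L$ because the averaging ellipsoids $D_{I,T,t}$ eventually exhaust Zariski-dense subsets of $U$ and cannot concentrate on any proper $L$-invariant subvariety; one concludes $\mu_\infty=\mu_X$.

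The main obstacle is the symmetric-difference estimate in Step 1: although the slicewise translation $v_0\, t^{(m+1)/m}$ is small relative to $\sqrt{R_{I,T}(t)}$ only in the bulk of the $t$-interval, the large-$t$ regime is suppressed by the weight $dt/t^{m+2}$, and precisely quantifying the resulting cancellation requires a careful reprise of the boundary-layer analysis of each ellipsoid $D_{I,T,t}$, with extra care at the endpoints $t\to\alpha_{I,T}^+$ and $t\to\beta_{I,T}^-$, in the spirit of Lemmata \ref{lem:bound on skewed V vol}--\ref{lem:main estimate for large V skewed balls }.
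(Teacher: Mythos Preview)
Your Step~1 is essentially the paper's own argument (Proposition~\ref{unipotent invariance of haar measure}): both reduce to the symmetric difference of a family of ellipsoids translated by $t^{(m+1)/m}v_0$, and both arrive at the bound $T^{m^2-1}=o(T^{m(m+1)})$. The ``main obstacle'' you flag is actually not one: the paper simply bounds $(R_T(t))^{(m-1)/2}\le (t^{2/m}T^2)^{(m-1)/2}$ and integrates, without any endpoint truncation.

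The genuine gap is in Steps~2 and~3. Your appeal to Dani--Margulis or Kleinbock--Margulis non-divergence ``with uniformity in $t$ provided by the explicit semi-axis control'' overlooks the central difficulty, which the paper highlights explicitly in the remark following Theorem~\ref{thm:equidisitribution along skewed balls of the connected component}: the ellipsoids $D_{T,t}$ \emph{shrink to a point} as $t\to 0$, and it is precisely the range of small $t$ that carries the mass of $\mu^{\circ}_{T,g_1,g_2}$. Quantitative non-divergence for a unipotent trajectory $v\mapsto u_{-v}y$ gives nothing when the domain of $v$ is arbitrarily small, so there is no uniform estimate to be had this way. The paper's substitute is Lemma~\ref{our lemma expansion inequality for applying shah dichotomy}, a nontrivial expansion statement in representation spaces showing that
\[
\sup_{v\in B(t^{(1-\chi)/m})+\xi}\|a_t^{-1}u_v^{-1}g_0.\mathbf{x}\|\to\infty\quad\text{as }t\to 0,
\]
whenever $\mathbf{x}$ has a non-fixed component. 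This is proved by decomposing $a_t^{-1}u_v^{-1}$ into $m$ copies of $\SL(2,\R)$ and analysing each irreducible $\SL(2)$-factor (Lemmata~\ref{change of order lemma}--\ref{lem:main statement for expansion of sl2 reps}), together with the fact that $UA$ is epimorphic in $G$. Only with this in hand can one feed into Shah's dichotomy (Theorems~\ref{Shah dichotomy theorem} and~\ref{second dichotomy theorem of Shah}) and rule out the first alternative; and one must further check (Lemma~\ref{lem:the domain Dt,T has a ball of sufficiently large radius in it}) that, after truncating to $t\in(\alpha_\delta,\lambda)$, the ellipsoid $D_{T,t}$ actually contains a ball of radius $t^{(1-\chi)/m}$. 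None of this machinery appears in your sketch.

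Relatedly, your assertion that ``the averaging ellipsoids $D_{I,T,t}$ eventually exhaust Zariski-dense subsets of $U$'' is false for the same reason: these ellipsoids shrink, not grow. What rescues the argument is the compensating expansion by $a_t^{-1}$, captured by Lemma~\ref{our lemma expansion inequality for applying shah dichotomy}, not any largeness of the $v$-domain in $U$.
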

\begin{proof}[Proof of Theorem \ref{The $G$-invariance of limiting measure} assuming Theorem \ref{thm:equidisitribution along skewed balls of the connected component}]
In view of \eqref{decomposition of H_T into V_T}, we can decompose $\mu_{T,g_1,g_2}(F)$ as the following convex linear combination:\begin{align*}
    \mu_{T,g_1,g_2}(F)
    =\sum_{\substack{q\in \Delta}}\frac{\mu \left(V_{q,T}[g_1,g_2] \right)}{\mu \left(H_T[g_1,g_2] \right)}\frac{1}{\mu \left(V_{q,T}[g_1,g_2] \right)}\int_{V_{q,T}[g_1,g_2]} F(\tilde{q}^{-1}a_t^{-1}u_v^{-1}x) \frac{1}{t^{m+2}}dv dt
\end{align*} 
Recall that \begin{equation*}
    \mu(V_{q,T}[g_1,g_2])=\mu(V_T[g_1,\tilde{q} g_2]),
\end{equation*}and observe that \begin{equation}
    \frac{1}{\mu \left(V_{q,T}[g_1,g_2] \right)}\int_{V_{q,T}[g_1,g_2]} F(\tilde{q}^{-1}a_t^{-1}u_v^{-1}x) \frac{1}{t^{m+2}}dv dt=\mu_{T,g_1,\tilde{q} g_2}^\circ(L_{\tilde{q}^{-1}}(F)).
\end{equation}By assuming Theorem \ref{thm:equidisitribution along skewed balls of the connected component}, we have for all $q\in\Delta$,$$\lim_{T\to\infty}\mu_{T,g_1,\tilde{q} g_2}^\circ(L_{\tilde{q}^{-1}}(F))=\mu_X(L_{\tilde{q}^{-1}}(F))\underbrace{=}_{\text{invariance}}\mu_X(F).$$We denote \begin{equation*}
    c_{q,T}:=\frac{\mu \left(V_{q,T}[g_1,g_2] \right)}{\mu \left(H_T[g_1,g_2] \right)}.
\end{equation*}
Then clearly, $$\sum_{q\in\Delta}c_{q,T}=1,~\forall T,\text{ and }c_{q,T}\leq 1,~\forall q,T.$$
Importantly, by Lemma \ref{lem:estimate of V skewed balls as power of T and fract of A_m to power m^2} and by \eqref{eq:the estimate for H skew balls} there is $C>0$ such that for all $T>0$ $$c_{q,T}\leq\frac{C}{\|q\|^{m^2}}.$$
By Lemma \ref{lemma on the integral and summation in a ball}  the sum $\sum_{q\in\Delta}\frac{1}{\|q\|^{m^2}}$ converges. Then for arbitrary small $\e>0$, we may find $N_\e$ such that $$\sum_{\substack{q\in\Delta,\\\|q\|>N_\e}}c_{q,T}\leq \e,~ \forall T>0.$$
Thus, \begin{align*}
    |\mu_{T,g_1,g_2}(F)-\mu_X(F)|=&\left|\sum_{q\in\Delta}c_{q,T}(\mu_{T,g_1,\tilde{q} g_2}^\circ(F)-\mu_X(F))\right|\\
    \leq &\sum_{q\in\Delta}c_{q,T}\left|\mu_{T,g_1,\tilde{q} g_2}^\circ(F)-\mu_X(F)\right|\\
    \leq&\sum_{\substack{q\in\Delta,\\\|q\|\leq N_\e}}\left|\mu_{T,g_1,\tilde{q} g_2}^\circ(F)-\mu_X(F)\right|+2\e\|F\|_\infty.
\end{align*}
Then, by assuming Theorem \ref{thm:equidisitribution along skewed balls of the connected component}, we get that $$\limsup_{T\to\infty}|\mu_{T,g_1,g_2}(F)-\mu_X(F)|\leq2\e\|F\|_\infty,$$which concludes our proof.
\end{proof}

In the rest of the section we will be proving Theorem \ref{thm:equidisitribution along skewed balls of the connected component}, and we now explain the main ideas. In an overview, we prove unipotent invariance of the limiting measure and apply the linearisation technique, which is by now a standard technique in homogeneous dynamics. More specifically, let $\overline{X} = \overline{G/\Ga}$ denote the one-point compactification of $X=G/\Ga$. Then, by the Banach-Alaoglu theorem, there is a weak-* limit $\eta$ of the measures $\mu^\circ_{T,g_1,g_2},~ T>0$, and $\eta$ is a probability measure on $\overline{X}$. In Section \ref{sec:unipotent invariance}, we prove that $\eta$ is invariant under $U=\begin{bmatrix}
    I_m&0\\\R^m&1
\end{bmatrix}$. The celebrated  Ratner theorems on measure rigidity \cite{Ratner91a} state that any $U$-invariant and \emph{ergodic} measure on $G/\Ga$ is supported on a closed orbit having finite measure. In particular, this allows to describe the ergodic decomposition of the $U$-invariant measure $\eta$, and reduces the problem to proving that there is no escape of mass, namely $\eta(\infty)=0$, and to showing that the $\eta$-measure of proper closed orbits is zero. To prove the later fact, notice that the measures $\mu^\circ_{T,g_1,g_2}$ involve integration along families of polynomial trajectories. This allows to utilize deep results due to Shah on the behavior of polynomial trajectories in $G/\Ga$, see \cite{Shah1994LimitDO} and \cite{Shah1996LimitDO} (generalizing  the celebrated results of Dani-Margulis \cite{Dani1993LimitDO}) building on the linearisation technique. The essential challenge in applying the theorems of Shah is proving a certain expansion property of the acting group in representation space. We note that our apporach is similar to the one taken by Gorodnik in  \cite{Gorodnik2003LatticeAO}. The essential difference between our approach compared to \cite{Gorodnik2003LatticeAO} is in our treatment of the expansion in the representation space. 
\begin{remark}
Notice that our measures are explicitly given by $$\mu^\circ_{T,g_1,g_2}(F)= \frac{1}{\mu \left(V_{T}[g_1,g_2] \right)} \int_{\al_T}^{\be_T} \int_{D_{T,t}} F \left(a_t^{-1} u_v^{-1} g_1\Gamma \right) dv \frac{1}{t^{m+2}}dt,~F\in C_c(G/\Ga).$$Now $U$ is the horosphere for $a_t^{-1}$ which expands as $t\to 0.$ Then one might hope to apply the classical Margulis thickening trick (see Margulis's thesis
\cite{mar_thesis}) in order to prove the equidistribution of $\int_{D_{T,t}} F \left(a_t^{-1} u_v^{-1} g_1\Gamma \right) dv$ as $t\to0$. This approach fails since the ellipsoids $D_{T,t}\subseteq\R^{m+1}$ shrink as $t\to0$. Moreover, the integration above is essentially supported on the range where $t$ is small, see \eqref{eq:truncated integral}.
\end{remark}

\subsection{The $U$-invariance of the measure along skewed balls of $V$}\label{sec:unipotent invariance}
The main goal here is to prove that any weak-* limit of $\mu_{T,g_1,g_2}^\circ$  is $U:=\begin{bmatrix}
I_m & 0 \\
\mathbb{\R}^m & 1 
\end{bmatrix}$-invariant. For any $u_0\in U$ and $F\in C_c(X)$ consider $L_{u_0}(F)(x):=F(u_0^{-1}x)$.
\begin{proposition}
\label{unipotent invariance of haar measure}
For all $g_1,g_2\in G$ and all $F\in C_c(X)$ it holds that 
\begin{equation*}
    \lim_{T\to \infty}\left(\mu^\circ_{T,g_1,g_2}(F)-\mu^\circ_{T,g_1,g_2}(L_{u_0} (F)\right)=0.
\end{equation*}
\end{proposition}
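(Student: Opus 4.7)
The plan is to exploit the right $U$-invariance of the left Haar measure on $V=UA$. A direct computation using $a_tu_{v_0}=u_{t^{(m+1)/m}v_0}a_t$ shows that right multiplication by $u_{v_0}\in U$ sends $u_va_t$ to $u_{v+t^{(m+1)/m}v_0}a_t$, which at each fixed $t$ is a Lebesgue-preserving shift in the $v$-coordinate; hence the measure $dv\,dt/t^{m+2}$ is right $U$-invariant. Substituting $h\mapsto hu_0^{-1}$ in the definition of $\mu^\circ_{T,g_1,g_2}(L_{u_0}F)$ then gives
\begin{equation*}
\mu^\circ_{T,g_1,g_2}(L_{u_0}F)=\frac{1}{\mu(V_T[g_1,g_2])}\int_{V_T[g_1,g_2]\cdot u_0}F(h^{-1}x)\,d\mu(h),
\end{equation*}
so
\begin{equation*}
\bigl|\mu^\circ_{T,g_1,g_2}(F)-\mu^\circ_{T,g_1,g_2}(L_{u_0}F)\bigr|\le \|F\|_\infty\cdot\frac{\mu\bigl(V_T[g_1,g_2]\,\triangle\, V_T[g_1,g_2]\cdot u_0\bigr)}{\mu(V_T[g_1,g_2])}.
\end{equation*}
It therefore suffices to show that the right-hand ratio tends to $0$ as $T\to\infty$.

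By the fibration of $V_T[g_1,g_2]$ in $(v,t)$-coordinates from Section \ref{sec:Estimate of the Haar measure growth of skewed balls in H}, the symmetric difference slices over each $t\in(\al_{e,T},\be_{e,T})$ to $D_{T,t}\,\triangle\,(D_{T,t}+t^{(m+1)/m}v_0)$, where $D_{T,t}$ is the ellipsoid from \eqref{eq:def of D} with $q=e$. A standard geometric fact, obtained by linear change of variables from the Euclidean-ball estimate $|B(0,R)\,\triangle\,(B(0,R)+w)|\ll R^{m-1}|w|$ valid for $|w|\le R$, gives
\begin{equation*}
|E\,\triangle\,(E+w)|\le C(m)\min\!\Bigl(1,\tfrac{|w|}{r_{\min}(E)}\Bigr)|E|
\end{equation*}
for any ellipsoid $E\subset\R^m$. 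Since $D_{T,t}$ is the affine preimage, under a map depending only on $g_1,g_2$, of a Euclidean ball of radius $R_{e,T}(t)^{1/2}$, we have $r_{\min}(D_{T,t})\asymp R_{e,T}(t)^{1/2}$. Using $R_{e,T}(t)\sim t^{2/m}T^2$ in the main range, the level-$t$ relative error is of order $t^{(m+1)/m}|v_0|/(t^{1/m}T)=t|v_0|/T$.

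The final step is a two-regime split of the $t$-integral at $t=T^{1-\e}$ for some small $\e>0$. For $t\in(\al_{e,T},T^{1-\e})$ the level-$t$ relative error is $\le CT^{-\e}|v_0|$, so this regime contributes at most $CT^{-\e}|v_0|\cdot\mu(V_T[g_1,g_2])$. For $t\in(T^{1-\e},\be_{e,T})$ one uses the trivial bound $|D_{T,t}\,\triangle\,(D_{T,t}+w_t)|\le 2|D_{T,t}|$; since $R_{e,T}(t)^{m/2}/t^{m+2}\ll T^m/t^{m+1}$ in this regime, the contribution is $\ll T^m\cdot(T^{1-\e})^{-m}=T^{m\e}$, which is $o(T^{m(m+1)})=o(\mu(V_T[g_1,g_2]))$. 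Combining the two regimes, $\mu(V_T\,\triangle\,V_Tu_0)/\mu(V_T)\to 0$, proving the proposition. The main technical point is the ellipsoid symmetric-difference bound together with the uniform lower estimate $r_{\min}(D_{T,t})\asymp R_{e,T}(t)^{1/2}$; both reduce to the explicit linear $L$--$B$ description of $D_{T,t}$ already present in the paper's analysis, with implied constants depending only on the fixed $g_1,g_2$.
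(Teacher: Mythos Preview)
Your approach is essentially the same as the paper's: both reduce to bounding the $\mu$-measure of the symmetric difference $V_T[g_1,g_2]\,\triangle\,V_T[g_1,g_2]u_0$, slice it as $\int_{\al_T}^{\be_T}\bigl|D_{T,t}\triangle(D_{T,t}+t^{(m+1)/m}v_0)\bigr|\,dt/t^{m+2}$, and invoke an ellipsoid--translate estimate. Your bound $|E\triangle(E+w)|\ll(|w|/r_{\min}(E))|E|$ is, for these fixed-shape ellipsoids, equivalent to the paper's surface-area bound $|E\triangle(E+w)|\ll|w|\,S(E)$, since $|D_{T,t}|/r_{\min}(D_{T,t})\asymp R_T(t)^{(m-1)/2}\asymp S(D_{T,t})$.

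There is, however, a gap in your execution. You assert that on all of $(\al_T,T^{1-\e})$ the \emph{pointwise relative error} satisfies $|w_t|/r_{\min}(D_{T,t})\le CT^{-\e}|v_0|$, relying on $R_T(t)\sim t^{2/m}T^2$. That asymptotic fails near $\al_T$, where $R_T(t)\to 0$; and for any fixed $C_0>1$ the interval $(\al_T,C_0\al_T)$ carries a \emph{positive fraction} of $\mu(V_T[g_1,g_2])$, so it cannot simply be absorbed via the $\min(1,\cdot)$ cap. The fix is to avoid the relative-error formulation altogether: your own inequality already gives the absolute bound
\[
\bigl|D_{T,t}\triangle(D_{T,t}+w_t)\bigr|\;\ll\;|w_t|\,R_T(t)^{(m-1)/2},
\]
which needs only the \emph{upper} bound $R_T(t)\le t^{2/m}T^2$. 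Integrating directly over $(\al_T,\be_T)$ yields $\ll T^{m-1}\int_{\al_T}^{\be_T}t^{-m}\,dt\ll T^{m-1}\al_T^{-(m-1)}\asymp T^{m^2-1}$, and dividing by $\mu(V_T[g_1,g_2])\asymp T^{m(m+1)}$ gives the result in one stroke, with no regime split needed. This is exactly how the paper proceeds (via Lemma~\ref{lemma on symmetric difference}).
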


The following lemma will be needed: 

\begin{lemma}\label{lemma on symmetric difference}
Let $f:\R^{d}\to \R$ be a bounded continuous function. Let $E\subset \R^{d}$ be an ellipsoid with surface area $S$, then for $y\in \R^{d}$ we have the estimate:
\begin{equation*}
    \left|\int_{E}[f(v)-f(y+v)]dv\right| \ll \|f\|_\infty \|y\|S,
\end{equation*}
where the implied constant depends on $d$ only.
\end{lemma}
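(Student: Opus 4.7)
The plan is to reduce the estimate to a geometric bound on the symmetric difference $E \triangle (E+y)$, and then to apply the classical ``tube estimate'' for the symmetric difference of a convex body and its translate.

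First I would perform the substitution $w = v + y$ in one of the two integrals to obtain
\begin{equation*}
\int_{E}[f(v) - f(y+v)]dv = \int_{E} f(v)\,dv - \int_{E+y} f(w)\,dw,
\end{equation*}
and observe that the integrals over $E \cap (E+y)$ cancel. Hence the left-hand side equals
\begin{equation*}
\int_{E \setminus (E+y)} f(v)\,dv - \int_{(E+y) \setminus E} f(w)\,dw,
\end{equation*}
whose absolute value is bounded by $\|f\|_\infty \cdot |E \triangle (E+y)|$, where $|\cdot|$ denotes Lebesgue measure.

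Next I would bound the volume of the symmetric difference using the convexity of $E$. Write $\R^d = \R y \oplus y^{\perp}$, and slice by lines parallel to $y$. For any such line $\ell$, $E \cap \ell$ is a (possibly empty) closed interval of some length $L_\ell$ and $(E+y)\cap \ell$ is the same interval translated by $y$; a direct one-dimensional computation shows that the length of their symmetric difference along $\ell$ is at most $2\min(\|y\|, L_\ell) \le 2\|y\|$. Integrating over $y^\perp$ via Fubini gives
\begin{equation*}
|E \triangle (E+y)| \le 2\|y\|\cdot |P_{y^\perp}(E)|,
\end{equation*}
where $P_{y^\perp}(E)$ denotes the orthogonal projection of $E$ onto $y^\perp$. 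By the standard Cauchy projection bound for convex bodies, the $(d-1)$-volume of $P_{y^\perp}(E)$ is at most $\tfrac{1}{2}\mathcal{H}^{d-1}(\partial E) = \tfrac{1}{2} S$ (since each boundary point of $P_{y^\perp}(E)$ is the image of at most two boundary points of $E$). Combining the two inequalities yields $|E \triangle (E+y)| \le \|y\|\,S$, and the lemma follows up to an absolute (dimension-dependent) constant.

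The main step is really the geometric bound on $|E \triangle (E+y)|$. There is no serious obstacle; the only point that needs care is the Cauchy-type bound relating projection area to surface area, but for an ellipsoid one may alternatively compute the projection area explicitly by diagonalizing the quadratic form defining $E$, which avoids any appeal to general convex geometry and gives the same estimate with an explicit constant depending only on $d$.
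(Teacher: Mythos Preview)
Your proposal is correct and follows essentially the same approach as the paper: both reduce the integral to $\|f\|_\infty \cdot \vol(E \triangle (E+y))$ via a change of variables, and then bound the symmetric-difference volume by $\|y\|S$. The only difference is that the paper merely asserts the geometric bound (invoking a ``thickening the boundary'' argument or a reference), whereas you supply an explicit proof via slicing along lines parallel to $y$ together with the Cauchy projection inequality; this makes your write-up more self-contained but not conceptually different.
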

\begin{proof}
An elementary geometric argument (thickening the boundary) implies that $$\vol(E\triangle (E-y))\ll \|y\|S,$$where $S$ is the surface area of the ellipsoid, and Vol stands for the Lebesgue volume. Alternatively, see \cite{symmetricdifferencesch2010} for a much more general theorem implying this fact. Now
\begin{align*}
    \left|\int_{E}[f(v)-f(y+v)]dv\right|
  = &\left|\int_{E}f(v)dv - \int_{E}f(v)dv\right|\\
  \le & \int_{E\triangle
 (E-y)}|f(v)|dv\\
  \ll & \|f\|_\infty\|y\|S, \tag{$f$ is bounded}
\end{align*}
\end{proof}
\begin{proof}[Proof of Proposition \ref{unipotent invariance of haar measure}]
Recall that $$V_T[g_1,g_2]=\{u_va_t:t>0,~v\in D_{T,t}\},$$where
\begin{equation}\label{eq:ellipsoid for connected component}
 D_{T,t}:=\{v\in\R^m:\|\text{v}_1 \text{g}_2+\det(\text{g}_1)^{-1} v \text{g}_2+\det(\text{g}_1)^{-1} \text{v}_2t^{\frac{1}{m}+1}\|^2 \leq R_{T}(t)\}, 
\end{equation}and where\begin{equation}\label{eq:radius for connected component}
   R_T(t):=-\det(\text{g}_1\text{g}_2)^{-2} t^{\frac{2}{m}+2}+t^{\frac{2}{m}}T^2-\|\text{g}_1 \text{g}_2\|^2, 
\end{equation}see \eqref{eq:def of R_q,T}, \eqref{eq:def of D} (with $q=I_m$). Here $\text{g}_i,\text{v}_j$ are defined by \eqref{matrix representation of g1 and g2}.  We denote $\al_T:=\al_{I_m,T}$ and $\be_T:=\be_{I_m,T}$ the roots $R_T(t)$, see Lemma \ref{lem:roots lemma} (again, with $q=I_m$). 
Let $F\in C_c(X)$ and fix $u_{v_0}\in U$. We have
\begin{align*}
    &\left| \int_{V_{T}[g_1,g_2]} F(h^{-1}x) d\mu(h)-\int_{V_{T}[g_1,g_2]} F(u_{v_0}^{-1}h^{-1}x) d\mu(h) \right |\\
    =&\left|\int_{\al_{T}}^{\be_{T}}\int_{D_{T,t}} \left(F( a_t^{-1} u_v^{-1} x) - F(u_{v_0}^{-1} a_t^{-1} u_v^{-1}x) \right) dv \frac{1}{t^{m+2}}dt\right |
    \end{align*}
\begin{align*}=&\left|\int_{\al_{T}}^{\be_{T}} \int_{D_{T,t}} \left(F( a_t^{-1} u_v^{-1} x) - F(a_t^{-1} [a_t u_{v_0}^{-1}a_t^{-1}]  u_v^{-1}x) \right) dv \frac{1}{t^{m+2}}dt\right |\\
    =&\left|\int_{\al_{T}}^{\be_{T}} \int_{D_{T,t}} \left(F( a_t^{-1} u_{-v} x) - F(a_t^{-1} u_{-t^{\frac{1}{m}+1}v_0-v}  x) \right) dv \frac{1}{t^{m+2}}dt\right |.
\end{align*}

For the use of Lemma \ref{lemma on symmetric difference}, notice that the surface area of the ellipsoids $D_{T,t}$ is $\asymp (R_T(t))^{\frac{m-1}{2}}$. We have

\begin{align*}
    &\left| \int_{\al_T}^{\be_T}  \int_{D_{T,t}} \left(F( a_t^{-1} u_{-v} g_1\Gamma) - F(a_t^{-1} u_{-t^{\frac{1}{m}+1}v_0-v}  g_1\Gamma) \right) dv \frac{1}{t^{m+2}}dt \right|\\
    \ll &  \int_{\al_T}^{\be_T}   \|-t^{\frac{1}{m}+1}v_0\|(-\det(\text{g}_1\text{g}_2)^{-2} t^{\frac{2}{m}+2}+t^{\frac{2}{m}}T^2-\|\text{g}_1  \text{g}_2\|^2)^\frac{m-1}{2} \frac{1}{t^{m+2}}dt  \tag{by Lemma \ref{lemma on symmetric difference}}\\
    \ll &  \int_{\al_T}^{\be_T}   \frac{(t^{\frac{2}{m}}T^2)^\frac{m-1}{2}}{t^{m-\frac{1}{m}+1}}dt \\
    =&T^{m-1}\int_{\al_T}^{\be_T}\frac{1}{t^m}dt\\
    \ll& T^{m-1}\frac{1}{\al_T^{m-1}}\\
    \ll& T^{m-1}\frac{1}{1/T^{m(m-1)}}=T^{m^2-1}. \tag{by \eqref{range for a}}
\end{align*}
After normalization by $V_{T}[g_1,g_2]\asymp T^{m(m+1)}$, this term goes to zero as $T\to \infty$.
\end{proof}

\subsection{Expanding property of $UA$ in representations of $\SL(m+1,\R$)}
In order to apply the results of Shah, we need to establish that vectors expand by the action of $V=UA$ in a representation of $G=\SL(m+1,\R)$. We denote by $\V_G$ a representation of $G$ and we let $$\V_G=\V_0\oplus \V_1,$$ where $\V_0$ is the space of $\SL(m+1,\R)$-fixed vectors and $\V_1$ is it's invariant complement. Suppose that $\V_1$ is not trivial, and let $\Pi$ be the projection of $\V_G$ onto $\V_1$ with kernel $\V_0$. Our main result in this section is the following.
\begin{lemma}\label{our lemma expansion inequality for applying shah dichotomy} Fix $g_0\in G$, let $K\subseteq \R^m$ be a bounded set, and let $\chi\in(0,1)$. Then for any $\nu>0$, there exist $t_0>0$ such that for any $0<t<t_0$, any $\xi\in K$ and any \emph{$\textbf{x}\in \V_G$} such that \emph{$\|\Pi(\textbf{x})\|\geq r$}, it holds
\emph{\begin{equation}\label{expansion inequality for applying shah dichotomy}
    \sup_{v\in \text{B}\left(t^{\frac{1-\chi}{m}}\right)+\xi} \|a_t^{-1}u_v^{-1}g_0.\textbf{x}\| >\nu,
\end{equation}}
where \emph{$\text{B}(\beta)$} is the ball of radius $\beta$ in $\R^m$ centered at the origin. 
\end{lemma}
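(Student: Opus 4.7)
The plan is to use the $A$-weight decomposition of $\V_G$ together with the polynomial dependence of $\pi(u_v^{-1})$ on $v$, reducing the claim to a quantitative non-degeneracy estimate for polynomials on small balls. Set $H:=\diag(-(1/m)I_m, 1)\in\mathfrak{sl}(m+1,\R)$ so that $a_t = \exp((\log t) H)$, and decompose $\V_G = \bigoplus_\lambda \V_G^\lambda$ into $H$-weight spaces on which $\pi(a_t^{-1})$ acts by the scalar $t^{-\lambda}$. The generators $E_i := e_{m+1} e_i^T$ of the Lie algebra of $U$ satisfy $[H,E_i] = \frac{m+1}{m} E_i$, so $N_i := d\pi(E_i)$ are commuting nilpotents raising the $H$-weight by $(m+1)/m$, and $\pi(u_v^{-1}) = \exp(-\sum_i v_i N_i)$ depends polynomially on $v$. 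Since $\Pi$ is $G$-equivariant, we may replace $\textbf{x}$ by $\Pi(\textbf{x})$ and assume $\textbf{x}\in \V_1$ with $\|\textbf{x}\| \geq r$. Decomposing $\V_1 = \bigoplus_j V_{\mu^{(j)}}$ into non-trivial irreducible summands and choosing the index $j^*$ maximising $\|\textbf{x}^{(j^*)}\|$ gives $\|\textbf{x}^{(j^*)}\| \geq r/\sqrt{n}$, where $n$ is the number of summands.

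A direct Weyl group computation (pairing the dominant highest weight $\mu^{(j^*)} = (\mu_1^{(j^*)},\ldots,\mu_{m+1}^{(j^*)})$ with the Weyl translate of $H$ having $1$ in its leading coordinate) shows that the maximal $H$-weight of $V_{\mu^{(j^*)}}$ is $\lambda := \mu_1^{(j^*)}(m+1)/m > 0$ and its weight string has length $(\mu_1^{(j^*)} - \mu_{m+1}^{(j^*)})(m+1)/m$. Consequently the projection
\begin{equation*}
Q(v) := \pi_\lambda\bigl(\pi(u_v^{-1}) \pi(g_0) \textbf{x}^{(j^*)}\bigr) = \sum_{k \geq 0} \frac{(-1)^k}{k!}\Bigl(\sum_i v_i N_i\Bigr)^k \bigl(\pi(g_0)\textbf{x}^{(j^*)}\bigr)_{\lambda - k(m+1)/m}
\end{equation*}
is a polynomial in $v$ of degree at most $D := \mu_1^{(j^*)} - \mu_{m+1}^{(j^*)}$ whose coefficients depend linearly on $\textbf{x}^{(j^*)}$. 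The map $\textbf{x}^{(j^*)}\mapsto Q$ is injective: if $Q\equiv 0$ then $\pi_\lambda(N^\alpha \pi(g_0)\textbf{x}^{(j^*)}) = 0$ for every multi-index $\alpha$, so the non-zero $U$-invariant subspace spanned by $\{N^\alpha \pi(g_0)\textbf{x}^{(j^*)}\}$ avoids $V_{\mu^{(j^*)}}^\lambda$; but Lie-Kolchin together with the identification $V_{\mu^{(j^*)}}^U = V_{\mu^{(j^*)}}^\lambda$ (valid because $U$ is the unipotent radical of the maximal parabolic $P$ and only the top $H$-weight vectors are annihilated by all $N_i$) forces every non-zero $U$-invariant subspace to meet $V_{\mu^{(j^*)}}^\lambda$, a contradiction. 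Finite-dimensional compactness then yields a constant $c_1 = c_1(g_0,K_0,\V_G) > 0$ with $\sup_{v \in K_0} \|Q(v)\| \geq c_1 \|\textbf{x}^{(j^*)}\|$ for any fixed compact $K_0\subseteq \R^m$ with non-empty interior, and a standard Remez/Bernstein-type inequality converts this into the small-ball bound
\begin{equation*}
\sup_{v \in B(\xi, R)} \|Q(v)\| \geq c_2 R^D \sup_{v\in K_0} \|Q(v)\|, \qquad \xi \in K,\ 0 < R \leq R_0.
\end{equation*}

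Combining these with the $t^{-\lambda}$-expansion of $\pi(a_t^{-1})$ on $\V_1^\lambda$ and substituting $R = t^{(1-\chi)/m}$ yields
\begin{equation*}
\sup_{v \in \text{B}(t^{(1-\chi)/m}) + \xi} \|\pi(a_t^{-1} u_v^{-1} g_0) \textbf{x}\| \geq c_3\, r \cdot t^{-\lambda + D(1-\chi)/m},
\end{equation*}
so the lemma follows once the exponent $-\lambda + D(1-\chi)/m$ is strictly negative. Substituting the explicit values, this inequality becomes $-\mu_{m+1}^{(j^*)}/\mu_1^{(j^*)} < (m+\chi)/(1-\chi)$, and the dominance condition $\mu_1^{(j^*)}\geq\cdots\geq\mu_{m+1}^{(j^*)}$ together with $\sum\mu_k^{(j^*)}=0$ forces $-\mu_{m+1}^{(j^*)}\leq m\mu_1^{(j^*)}$, so it suffices to have $m < (m+\chi)/(1-\chi)$, i.e.\ $\chi(m+1)>0$, which holds for every $\chi\in(0,1)$. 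The main obstacle I expect in executing this plan is ensuring the uniformity of the constants $c_1$ and $c_2$ as $\textbf{x}$ ranges over vectors with $\|\Pi(\textbf{x})\|\geq r$ (hence across all irreducible summands $V_{\mu^{(j)}}$ of $\V_1$) and as $\xi$ ranges over $K$; this is where the finite-dimensionality of $\V_G$ is essential, as it provides a uniform a priori bound on the degree $D$ and on the finitely many irreducible isomorphism types appearing in $\V_1$.
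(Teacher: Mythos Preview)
Your argument is correct and is a genuinely different route from the paper's proof. The paper decomposes $a_t^{-1}u_v^{-1}$ as an ordered product of $m$ elements, one in each of the $\SL(2,\R)$-copies $\SL^{(j)}(2,\R)\subset\SL(m+1,\R)$, and uses the epimorphic property of $UA$ to show that for every nonzero $\textbf{x}\in\V_1$ at least one $\SL^{(j)}(2,\R)$ ``sees'' $\textbf{x}$ (i.e.\ the projection onto the non-fixed part for that copy is bounded below); the expansion then comes from explicit $\SL(2,\R)$ representation theory applied to that copy, while the remaining factors are controlled by a non-contraction estimate. Your approach instead works globally with the $H$-weight decomposition of $\V_1$, projects onto the top $H$-weight space of a single irreducible summand, and replaces the $\SL(2)$ computation by the fact $V_\mu^U=V_\mu^\lambda$ combined with a Remez-type inequality for vector-valued polynomials of bounded degree. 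The latter identification can be seen directly: if $N_iw=0$ for all $i$ and $w$ has weight $\nu$, then each $\mathfrak{sl}_2^{(i)}$-theory forces $\nu_i\le\nu_{m+1}$, while convexity of the weight polytope gives $\nu_{m+1}\le\mu_1$, so $\nu_{m+1}=\mu_1$ and $w\in V_\mu^\lambda$.

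What each approach buys: the paper's $\SL(2)$ factorisation is more hands-on and needs nothing beyond the classical $(n{+}1)$-dimensional $\SL(2)$ modules, at the cost of the somewhat ad hoc permutation trick that places the ``good'' index $j$ last in the product. Your approach is cleaner structurally and gives an explicit exponent $-\lambda+D(1-\chi)/m$ tied to the highest-weight data of each irreducible constituent; it also makes transparent why $\chi>0$ is exactly what is needed, via the inequality $-\mu_{m+1}\le m\mu_1$. The uniformity issue you flag is indeed the only delicate point, but since $\V_1$ has finitely many irreducible summands, both $D$ and $\lambda$ range over a finite set and the constants $c_1,c_2$ can be taken to be the minimum over these finitely many cases; uniformity in $\xi\in K$ for the Remez constant is a standard consequence of the Markov brothers' inequality iterated $D$ times over a domain containing $K\cup K_0$.
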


The idea behind the proof of the lemma to decompose the elements  $a_t^{-1}u_v^{-1}$ into  $m$ elements  each belonging to a subgroup isomorphic to $\SL(2,\R)$. That's useful since we can use the explicit description of irreducible representations  of $\SL(2,\R)$ to prove an   expansion of the norm for each term in the decomposition. We note that a similar approach was used in \cite{Kleinbock2006DIRICHLETSTO}.

We now give the decomposition. Let  $E_{i,j}$ be the $(m+1)\times(m+1)$ matrix where the $(i,j)$-th entry is $1$ while all other entries are zero. Consider the following copy of $\SL(2,\R)$ in $\SL(m+1,\R)$  
\begin{equation}\label{eq:j-th copy of SL2 in SLm+1}
\SL^{(j)}(2,\R):=\{I_{m+1}+(a-1)E_{jj}+bE_{m+1,j}+cE_{j,m+1}+(d-1)E_{m+1,m+1}:ad-bc=1\},
\end{equation}
for $j=1,2,...,m$. We will denote \begin{equation*}
    \pi^{(j)}\begin{bmatrix}
        a&b\\c&d
    \end{bmatrix}:=I_{m+1}+(a-1)E_{jj}+bE_{m+1,j}+cE_{j,m+1}+(d-1)E_{m+1,m+1}.
\end{equation*}
We have the following observation which we leave the reader to verify.
\begin{lemma}\label{change of order lemma}
    Let $\sigma:\{1,2,\dots m\}\to \{1,2,\dots,m\}$ be a permutation. Then it holds that \begin{equation}\label{eq:decompostion of AU to SL2 mats}
   a_t^{-1}u_v^{-1}=\prod_{j=1}^m\pi^{(\sigma(j))}\left(
   \begin{bmatrix}
   t^{\frac{1}{m}}&0\\
   0&t^{-\frac{1}{m}}
 \end{bmatrix}
 \begin{bmatrix}
   1&0\\
   -t^{-\frac{m-j}{m}}v_{\sigma(j)}&1
 \end{bmatrix}\right),   
    \end{equation}
  for $a_t,u_v$ as in \eqref{eq:definition of u_v a_t and qtilde}.
\end{lemma}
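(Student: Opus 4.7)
This statement is a matrix identity, so my plan is to verify it by direct computation. The target is
\[
a_t^{-1}u_v^{-1} \;=\; \begin{bmatrix} t^{1/m}I_m & 0 \\ -t^{-1}v & t^{-1} \end{bmatrix},
\]
and the task reduces to showing that the ordered product on the right-hand side reproduces this matrix.

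First I would write each factor explicitly. A direct $2\times 2$ calculation gives
\[
\begin{bmatrix} t^{1/m} & 0 \\ 0 & t^{-1/m}\end{bmatrix}\begin{bmatrix} 1 & 0 \\ -t^{-(m-j)/m}v_{\sigma(j)} & 1\end{bmatrix} \;=\; \begin{bmatrix} t^{1/m} & 0 \\ -t^{(j-m-1)/m}v_{\sigma(j)} & t^{-1/m}\end{bmatrix},
\]
so the $j$-th factor on the right-hand side of the claim differs from $I_{m+1}$ in exactly three entries: $t^{1/m}$ at $(\sigma(j),\sigma(j))$, $t^{-1/m}$ at $(m+1,m+1)$, and $-t^{(j-m-1)/m}v_{\sigma(j)}$ at $(m+1,\sigma(j))$. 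In particular each factor is lower-triangular with a diagonal upper-left $m\times m$ block, so the whole product inherits both of these properties.

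Next I would compute the product inductively via the partial products $\Pi_k := \prod_{j=1}^k \pi^{(\sigma(j))}(M_j)$, where $M_j$ denotes the $2\times 2$ matrix appearing in the $j$-th factor. The $(m+1,m+1)$-entry is multiplicative, so $\Pi_m[m+1,m+1] = (t^{-1/m})^m = t^{-1}$; since $\sigma$ is a permutation, each diagonal slot of the upper-left $m\times m$ block accumulates exactly one factor of $t^{1/m}$, giving $t^{1/m}I_m$ overall. The only subtle step is tracking the bottom row: when $\Pi_{j-1}$ is multiplied on the right by the $j$-th factor, the new $(m+1,\sigma(j))$-entry becomes
\[
\Pi_{j-1}[m+1,m+1]\cdot\bigl(-t^{(j-m-1)/m}v_{\sigma(j)}\bigr) \;=\; t^{-(j-1)/m}\cdot\bigl(-t^{(j-m-1)/m}v_{\sigma(j)}\bigr) \;=\; -t^{-1}v_{\sigma(j)},
\]
and this entry is preserved by every later factor since $\sigma(j')\neq\sigma(j)$ for $j'>j$. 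Running $j$ from $1$ to $m$ then fills in the entire bottom row as $-t^{-1}v$, matching $a_t^{-1}u_v^{-1}$.

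I do not anticipate a real obstacle here; the exponent $-(m-j)/m$ in the statement has been chosen precisely so that it cancels with the accumulated factor $t^{-(j-1)/m}$ coming from the diagonal entries of the preceding factors, and the remaining work is routine bookkeeping that could alternatively be presented as a one-line verification by matrix multiplication after collecting the three special entries of each factor.
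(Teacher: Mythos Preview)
Your proposal is correct and is exactly the kind of direct matrix computation the paper has in mind: the paper states the lemma as an ``observation which we leave the reader to verify'' and gives no proof of its own. Your inductive bookkeeping of the bottom row, noting that the accumulated diagonal factor $t^{-(j-1)/m}$ exactly cancels the exponent $t^{(j-m-1)/m}$ to produce $-t^{-1}v_{\sigma(j)}$, is precisely the verification the authors are leaving implicit.

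One small remark: you are (correctly) reading $\pi^{(j)}$ as the natural embedding placing the lower-left $2\times 2$ entry at position $(m+1,j)$; the paper's displayed formula for $\pi^{(j)}$ has the $b$ and $c$ entries transposed, which appears to be a typo, since otherwise the lemma would be false as stated. Your interpretation is the one that makes $\pi^{(j)}$ a homomorphism and the identity hold, so no change to your argument is needed.
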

 
The decomposition \eqref{eq:decompostion of AU to SL2 mats} reduces the proof of  Lemma \ref{our lemma expansion inequality for applying shah dichotomy} to the study of the expansion of elements of the form $$\begin{bmatrix}
t^{\frac{1}{m}} & 0 \\
0 & t^{-\frac{1}{m}} 
\end{bmatrix}\begin{bmatrix}
1 & 0 \\
y & 1 
\end{bmatrix}$$ in representations of $\SL(2,\R)$. 
\subsubsection{Expansion in $\SL(2,\R)$-irreducible representations}
We now briefly recall some basic facts on $\SL(2,\R)$-irreducible representations.

Recall that if $\pi$ is an $(n+1)$-dimensional irreducible representation of $\SL(2,\R)$ and $\pi'$ is the induced Lie algebra representation, then there exists a basis $v_0,v_1,...,v_n$ such that
\begin{align*}
    &\pi'(H)(v_i)=(n-2i)v_i, i=0,1,...,n;\\
    &\pi'(X)(v_i)=i(n-i+1)v_{i-1}, i=0,1,...,n;\\
    &\pi'(Y)(v_i)=v_{i+1}, i=0,1,...,n~(v_{n+1}=0).
\end{align*}
where
$H = \begin{bmatrix}
    1 & ~~0\\
    0 & -1
  \end{bmatrix},
  X = \begin{bmatrix}
    0 & 1\\
    0 & 0
  \end{bmatrix}$, and
  $Y = \begin{bmatrix}
    0 & 0\\
    1 & 0
  \end{bmatrix}$ form a generating set of $\frak{sl}(2,\R)$.

\vspace{5mm}
 Under the matrix Lie group-Lie algebra correspondence, 
\[\pi \left(\begin{bmatrix}
    t^{\frac{1}{m}} & 0\\
    0 & t^{-\frac{1}{m}}
\end{bmatrix} \right)=\exp(\pi'(\log(t^{\frac{1}{m}})H))\] has the matrix representation 
\begin{equation}\label{matrix rep of a_t under the basis}
    \begin{bmatrix}
    t^{\frac{n}{m}} & & &\\
    & t^{\frac{n-2}{m}} & & \\
    & & \ddots &\\
    & & & {t}^{-\frac{n}{m}}
  \end{bmatrix}
\end{equation}
and 
\[\pi \left(\begin{bmatrix}
    1 & 0\\
    -y & 1
\end{bmatrix} \right)=\exp(\pi'(-yY))\]
has the matrix representation
\begin{equation}\label{matrix rep of u_y under the basis}
    \begin{bmatrix}
    1 & & &\\
    p_{1}(y) & 1 & & \\
    \vdots   & \ddots & \ddots &\\
    p_{n}(y) & \cdots &  p_{1}(y)  &  1
  \end{bmatrix}
\end{equation}
where $p_{l}(y)=\frac{(-y)^l}{l!},~l=1,...,n$. 
Observe that the line $$W:=\R v_n$$ is the subspace of fixed vectors of $\pi \left(\begin{bmatrix}
    1 & 0\\
    -y & 1
\end{bmatrix}\right),y\in\R.$ This space is important for us since it's the  eigenspace of the matrices $\pi \left(\begin{bmatrix}
    t^{\frac{1}{m}} & 0\\
    0 & t^{-\frac{1}{m}}
\end{bmatrix}\right)$ which corresponds to the largest eigenvalue as $t\to 0$. 
We denote by $\pr_W$ the orthogonal projection on $W$ with respect to some scalar product on $\V_n$. 

The following lemma is a very special case of \cite[Lemma 13]{Gorodnik2003LatticeAO} (which is essentially \cite[Lemma 5.1]{Shah1996LimitDO}). In our setting the proof simplifies considerably and we prove it for completeness.
\begin{lemma}\label{very important inequaity lemma of nimish-gorodnik}
Let $\pi:\SL(2,\R)\to \V_n$ be the $(n+1)$-dimensional irreducible representation of $\SL(2,\R)$.   Fix a bounded interval \emph{I}. Then there exists a constant $C>0$ such that for any $\beta \in (0,1)$, $\tau\in \emph{\text{I}}$ and \emph{$\textbf{x}\in \V_n$},
\emph{\begin{equation*}
    \sup_{y\in([0,\beta]+\tau)}\left\|\pr_W\left(\pi \left(\begin{bmatrix}
    1 & 0\\
    -y & 1
\end{bmatrix}\right)\textbf{x}\right)\right\| \ge C \beta^n\|\textbf{x}\|.
\end{equation*}}
\end{lemma}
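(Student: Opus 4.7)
The plan is to identify $\pr_W(\pi(u_{-y})\textbf{x})$ explicitly with a polynomial in $y$ of degree $\le n$ whose coefficients are, up to factorial weights, a permutation of the components of $\textbf{x}$, and then to prove a general lower bound on the sup-norm of such a polynomial over $[\tau,\tau+\beta]$. Writing $\textbf{x}=\sum_{i=0}^n x_i v_i$ in the basis introduced before the lemma and applying the lower-triangular matrix \eqref{matrix rep of u_y under the basis}, the $v_n$-coefficient of $\pi(u_{-y})\textbf{x}$ is
\begin{equation*}
P_\textbf{x}(y) := \sum_{k=0}^{n}\frac{(-1)^{k}\, x_{n-k}}{k!}\, y^{k}.
\end{equation*}
Since $W=\R v_n$, we have $\|\pr_W(\pi(u_{-y})\textbf{x})\|\asymp |P_\textbf{x}(y)|$; moreover, the map sending $(x_0,\dots,x_n)$ to the coefficient vector $(c_0,\dots,c_n)$ of $P_\textbf{x}$ is a permutation composed with a diagonal rescaling by factorials, so $\max_{k}|c_k|\ge c'_n \|\textbf{x}\|$ for some constant $c'_n$ depending only on $n$. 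The lemma therefore reduces to: there exists $C=C(n,I)>0$ such that
\begin{equation*}
\sup_{y\in[\tau,\tau+\beta]}|P(y)|\ge C\,\beta^{n}\max_{0\le k\le n}|c_k|
\end{equation*}
for every polynomial $P(y)=\sum_{k=0}^n c_k y^k$, every $\tau\in I$, and every $\beta\in(0,1)$.

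To prove this rescaled statement, I would substitute $y=\tau+\beta s$ with $s\in[0,1]$ and expand $Q(s):=P(\tau+\beta s)=\sum_{l=0}^n b_l s^l$. A direct computation gives $\textbf{b} = D_\beta N_\tau \textbf{c}$, where $D_\beta=\diag(1,\beta,\dots,\beta^n)$ and $N_\tau$ is the upper-triangular matrix with entries $\binom{k}{l}\tau^{k-l}$ for $k\ge l$ (with unit diagonal, hence $\det N_\tau = 1$). Consequently $\textbf{c} = N_\tau^{-1} D_\beta^{-1}\textbf{b}$ and $\|D_\beta^{-1}\|_\infty = \beta^{-n}$. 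Because the entries of $N_\tau$ are polynomials in $\tau$ and $\tau$ varies over the compact set $\overline{I}$, the $\ell^\infty$-operator norm $\|N_\tau^{-1}\|_\infty$ is bounded uniformly by some constant $C_I$. It follows that $\max_{l}|b_l|\ge C_I^{-1}\beta^{n}\max_{k}|c_k|$.

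On the finite-dimensional space of polynomials of degree $\le n$, the sup-norm on $[0,1]$ is equivalent to the $\ell^\infty$-norm on the coefficient vector, which yields $\sup_{s\in[0,1]}|Q(s)|\ge c''_n\max_l|b_l|$ for some $c''_n>0$. Since $\sup_{y\in[\tau,\tau+\beta]}|P(y)|=\sup_{s\in[0,1]}|Q(s)|$, chaining these three estimates proves the required bound and, combined with $\max_k|c_k|\ge c'_n\|\textbf{x}\|$, completes the proof of the lemma.

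The only mildly delicate point in this plan is ensuring that the $\beta^n$ factor arises with a constant uniform in $\tau\in I$; this uniformity is packaged in the estimate for $\|N_\tau^{-1}\|_\infty$, for which compactness of $\overline{I}$ together with continuity of $\tau\mapsto N_\tau$ suffice. Everything else is a routine application of the equivalence of norms on a finite-dimensional vector space, together with the elementary factorisation $D_\beta N_\tau$ of the coordinate-change matrix.
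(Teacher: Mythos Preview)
Your proof is correct and follows essentially the same approach as the paper: both compute $\pr_W(\pi(u_{-y})\textbf{x})$ as the polynomial $P_\textbf{x}(y)=\sum_k \frac{(-1)^k}{k!}x_{n-k}y^k$, rescale to the unit interval, and appeal to the equivalence of the sup-norm on $[0,1]$ with the coefficient norm for polynomials of degree $\le n$. The only cosmetic difference is that the paper first reduces to $\tau=0$ using the group law $\pi(u_{-(y+\tau)})=\pi(u_{-y})\pi(u_{-\tau})$ together with the uniform lower bound $\|\pi(u_{-\tau})\textbf{x}\|\ge C_1\|\textbf{x}\|$ for $\tau\in I$, whereas you absorb the translation directly into the coefficient change-of-basis matrix $N_\tau$ and bound $\|N_\tau^{-1}\|_\infty$ by compactness; these are two packagings of the same estimate.
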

\begin{proof}
First, note that it's enough to prove that there exists $C_0>0$ such that:
\begin{equation}\label{eq:expansion of the norm removing tau}
\sup_{y\in[0,\beta]}\left\|\pr_W\left(\pi \left(\begin{bmatrix}
    1 & 0\\
    -y & 1
\end{bmatrix}\right)\textbf{x}\right)\right\|\geq C_0\|\textbf{x}\|.  
\end{equation}
In fact, by assuming \eqref{eq:expansion of the norm removing tau}, we get
\begin{align*}
 \sup_{y\in([0,\beta]+\tau)}\left\|\pr_W\left(\pi \left(\begin{bmatrix}
    1 & 0\\
    -y & 1
\end{bmatrix}\right)\textbf{x}\right)\right\| =&\sup_{y\in[0,\beta]}\left\|\pr_W\left(\pi \left(\begin{bmatrix}
    1 & 0\\
    -y & 1
\end{bmatrix}\right)\pi \left(\begin{bmatrix}
    1 & 0\\
    -\tau & 1
\end{bmatrix}\right)\textbf{x}\right)\right\|\\
&\geq C_0\left\|\pi \left(\begin{bmatrix}
    1 & 0\\
    -\tau & 1
\end{bmatrix}\right)\textbf{x}\right\|\\
&\geq C_0 C_1 \|\textbf{x}\|,
\end{align*}
where $C_1$ depends on I.
Now, in the coordinates of the basis $\{v_0,...,v_n\}$, upon recalling \eqref{matrix rep of u_y under the basis} we see
\begin{align*}
  \pr_W\left(\pi \left(\begin{bmatrix}
    1 & 0\\
    -y & 1
\end{bmatrix}\right)\textbf{x}\right)&=\begin{bmatrix}
    1 & & &\\
    p_{1}(y) & 1 & & \\
    \vdots   & \ddots & \ddots &\\
    p_{n}(y) & \cdots &  p_{1}(y)  &  1
  \end{bmatrix}\begin{bmatrix}
      x_0\\
      x_1\\
      \vdots\\
      x_n
  \end{bmatrix}\\
  &=\begin{bmatrix}
      0\\
      \vdots
      \\
      0\\
      p_{\textbf{x}}(y)
  \end{bmatrix},
\end{align*}
where $p_{\textbf{x}}(y)=c\sum_{k=0}^n\frac{(-1)^k}{k!}x_{n-k}y^k$ (here $c$ is some constant depending on chosen scalar product for the orthogonal projection $\text{pr}_W.$)  Note the general fact that there exists $C_2>0$ such that for any polynomial $p(x)=a_nx^n+...+a_0$ of degree at most $n$  $$\sup_{s\in[0,1]}|p(s)|\geq C_2\|\textbf{a}\|,$$
where $\textbf{a}$ is the vector whose entries are the coefficients of $p(x)$. Thus we may conclude:$$\sup_{y\in[0,\beta]}|p_{\textbf{x}}(y)|=\sup_{s\in[0,1]}\left|\sum_{k=0}^n\frac{(-1)^k}{k!}\beta^k x_{n-k}s^k\right|\gg \beta^n\|\textbf{x}\|,$$
which proves the lemma.
\end{proof}

The above lemma implies the following statement, which is an analogue of Lemma \ref{our lemma expansion inequality for applying shah dichotomy} for $\SL(2,\R)$. 
\begin{lemma}\label{lem:main statement for expansion of sl2 reps}
 Suppose that $\pi:\SL(2,\R)\to\V$ is a representation, fix an interval \emph{I} and let $\chi\in (0,1)$. Then, for all $t\in(0,1)$ the following holds.
 \begin{enumerate}
     \item\label{enu:when pi is irreduc} If $\pi$ is irreducible, then for all $\tau\in\emph{\text{I}}$ it holds that\emph{\begin{equation}\label{eq:expansion of sl(2,r) reps of y in t interval}
    \sup_{y\in[0,t^{\frac{1-\chi}{m}}]+\tau}\left \|\pi\left(\begin{bmatrix}
t^{\frac{1}{m}} & 0 \\
0 & t^{-\frac{1}{m}} 
\end{bmatrix}
\begin{bmatrix}
1 & 0 \\
-y & 1 
\end{bmatrix}\right)\textbf{x} \right \|\gg t^{-\frac{\chi}{m}}\|\textbf{x}\|,
\end{equation}}for all $\emph{\textbf{x}}\in\V$, where the implied constant depends on $\pi$ only.
\item\label{enu:when pi not necessearily irred} If $\pi$ is any representation, then for all $\tau\in \emph{\text{I}}$ \emph{\begin{equation}\label{eq:expansion of sl(2,r) in not necessarily irreducible}
    \sup_{y\in[0,t^{\frac{1-\chi}{m}}]+\tau}\left \|\pi\left(\begin{bmatrix}
t^{\frac{1}{m}} & 0 \\
0 & t^{-\frac{1}{m}} 
\end{bmatrix}
\begin{bmatrix}
1 & 0 \\
-y & 1 
\end{bmatrix}\right)\textbf{x} \right \|\gg \|\textbf{x}\|,
\end{equation}}
for all $\emph{\textbf{x}}\in\V$, where the implied constant depends on $\pi$ only.
 \end{enumerate} 
\end{lemma}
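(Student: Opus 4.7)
The plan is to derive both parts from the weighted projection estimate in Lemma \ref{very important inequaity lemma of nimish-gorodnik}, exploiting the fact that the lowest weight line $W=\R v_n$ is simultaneously fixed by $\pi(u_{-y})$ for every $y$ and is the maximal expanding eigenspace of $\pi(a_t)$ as $t\to 0$, with eigenvalue $t^{-n/m}$ by \eqref{matrix rep of a_t under the basis}.

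For part \ref{enu:when pi is irreduc}, I would first fix a scalar product on $\V_n$ for which the weight basis $\{v_0,\dots,v_n\}$ is orthogonal, so that $\pr_W$ commutes with $\pi(a_t)$. This gives the key pointwise bound
\begin{equation*}
\|\pi(a_t u_{-y})\textbf{x}\|\geq \|\pr_W(\pi(a_t u_{-y})\textbf{x})\|=t^{-n/m}\|\pr_W(\pi(u_{-y})\textbf{x})\|.
\end{equation*}
Taking the supremum over $y\in[0,t^{(1-\chi)/m}]+\tau$ and invoking Lemma \ref{very important inequaity lemma of nimish-gorodnik} with $\beta=t^{(1-\chi)/m}$ yields a lower bound of order $t^{-n/m}\cdot t^{n(1-\chi)/m}\|\textbf{x}\|=t^{-\chi n/m}\|\textbf{x}\|$. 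For a non-trivial irreducible we have $n\geq 1$, and since $t\in(0,1)$ this dominates $t^{-\chi/m}\|\textbf{x}\|$, which gives \eqref{eq:expansion of sl(2,r) reps of y in t interval}.

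For part \ref{enu:when pi not necessearily irred} I would apply Weyl's complete reducibility and choose an inner product on $\V$ so that the decomposition $\pi=\pi_0\oplus\bigoplus_{i\geq 1}\pi_i$ is orthogonal, with $\pi_0$ the isotypic component of the trivial representation and each $\pi_i$ (for $i\geq 1$) a non-trivial irreducible. Writing $\textbf{x}=\textbf{x}_0+\sum_i\textbf{x}_i$ accordingly, I would split into two cases. If $\|\textbf{x}_0\|\geq\tfrac{1}{2}\|\textbf{x}\|$, then $\textbf{x}_0$ is fixed by $\pi(a_t u_{-y})$ and projecting onto $\pi_0$ yields $\|\pi(a_t u_{-y})\textbf{x}\|\geq\|\textbf{x}_0\|\gg\|\textbf{x}\|$ for every $y$. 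Otherwise at least one non-trivial component satisfies $\|\textbf{x}_{i_0}\|\gg\|\textbf{x}\|$ with an implied constant depending only on the number of summands, and part \ref{enu:when pi is irreduc} applied to $\pi_{i_0}$ produces a $y$ in the prescribed interval with $\|\pi(a_t u_{-y})\textbf{x}\|\geq\|\pi_{i_0}(a_t u_{-y})\textbf{x}_{i_0}\|\gg t^{-\chi/m}\|\textbf{x}\|\geq\|\textbf{x}\|$, giving \eqref{eq:expansion of sl(2,r) in not necessarily irreducible}.

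The main subtlety lies in part \ref{enu:when pi is irreduc}, where one must ensure that $\pr_W$ genuinely commutes with $\pi(a_t)$. This is achieved by the orthogonality of the weight basis, and since all norms on the finite-dimensional space $\V_n$ are equivalent, the choice of scalar product only affects the implied constants. The apparent gap between the $t^{-\chi n/m}$ produced by the argument and the weaker $t^{-\chi/m}$ claimed in the statement is not a real obstacle, because $n\geq 1$ together with $t\in(0,1)$ makes the former the stronger bound.
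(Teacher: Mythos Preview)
Your argument is correct and follows essentially the same route as the paper's proof: for part~\ref{enu:when pi is irreduc} you project onto the lowest weight line $W$, use that $\pi(a_t)$ commutes with $\pr_W$ and acts there by $t^{-n/m}$, and then invoke Lemma~\ref{very important inequaity lemma of nimish-gorodnik} with $\beta=t^{(1-\chi)/m}$; for part~\ref{enu:when pi not necessearily irred} you split off the trivial isotypic component and reduce to part~\ref{enu:when pi is irreduc} on a non-trivial irreducible summand. The only difference is cosmetic: the paper phrases the first step via ``boundedness of $\pr_W$'' rather than choosing an inner product making the weight basis orthogonal, and it leaves the case analysis in part~\ref{enu:when pi not necessearily irred} implicit, whereas you spell out the dichotomy $\|\textbf{x}_0\|\gtrless\tfrac12\|\textbf{x}\|$.
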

\begin{proof}
    In the following, for convenience, we will omit the representation symbol $\pi$. Suppose that $\V$ is the $n+1$'th irreducible representation. Then for all $y\in \R$ and all $\textbf{x}\in \V$
\begin{align}
& \left \|\begin{bmatrix}
t^{\frac{1}{m}} & 0 \\
0 & t^{-\frac{1}{m}} 
\end{bmatrix}
\begin{bmatrix}
1 & 0 \\
-y & 1 
\end{bmatrix}.\textbf{x} \right \| \nonumber \\
\ge & C \left \| \pr_W\left(\begin{bmatrix}
t^{\frac{1}{m}} & 0 \\
0 & t^{-\frac{1}{m}} 
\end{bmatrix}
\begin{bmatrix}
1 & 0 \\
-y & 1 
\end{bmatrix}.\textbf{x} \right) \right \| \tag{by the boundedness of the linear operator $\pr_W$}\\
= & C \left \| \begin{bmatrix}
t^{\frac{1}{m}} & 0 \\
0 & t^{-\frac{1}{m}} 
\end{bmatrix}.
 \pr_W\left(
\begin{bmatrix}
1 & 0 \\
-y & 1 
\end{bmatrix}.\textbf{x} \right) \right \| \tag{since the matrix \eqref{matrix rep of a_t under the basis} commutes with $\pr_W=\pr_{\R e_n}$}\\
=& C t^{-\frac{n}{m}} \left \|
 \pr_W\left(\begin{bmatrix}
     1&0\\-y&0
 \end{bmatrix}\textbf{x} \right) \right \| \tag{see the last component of \eqref{matrix rep of a_t under the basis}}
\end{align}
Then, part 1 of our corollary follows by Lemma \ref{very important inequaity lemma of nimish-gorodnik}. To conclude part 2, note that we may always decompose $\V$ as $\V_0\oplus\V_1$ where $\V_0$ is the space of fixed vectors, and $\V_1$ is an invariant complement. By further decomposing $\V_1$ into irreducible representations, and applying part 1 of the corollary in each component, we get the result.
\end{proof}
\subsubsection{Proving Lemma \ref{our lemma expansion inequality for applying shah dichotomy}}
Recall $\V_G=\V_0\oplus \V_1$ where $\V_0$ is the space of $\SL(m+1,\R)$-fixed vectors, and $\V_1$ is it's invariant complement. Note that in order to prove Lemma \ref{our lemma expansion inequality for applying shah dichotomy}, it's enough to verify that for a fixed $r>0$, a bounded set $K\subset \R^m$ and $\chi\in(0,1)$ it holds that \begin{equation}\label{eq:what we need to show for lem 3.6}
    \min_{\textbf{x}\in \V_1,~\|\textbf{x}\|\geq r}\sup_{v\in \text{B}\left(t^{\frac{1-\chi}{m}}\right)+\xi} \|a_t^{-1}u_v^{-1}\textbf{x}\|\gg t^{-\chi/m},~\forall\xi\in K
\end{equation}
where $t\in(0,1)$. As $\V_1$ is invariant by the induced action of $\SL^{(j)}(2,\R)$ (see \eqref{eq:j-th copy of SL2 in SLm+1}) for $j=1,2,...,m$, we get the decompositions
\begin{align}
    \V_1 &= \V_{0}^{(j)} \oplus \V_{1}^{(j)},j=1,2,...,m, 
\label{decompositive of vs w.r.t. SL2}
\end{align}where $\V_0^{(j)}$ is the subspace of fixed vectors of the action of $\SL^{(j)}(2,\R)$ and $\V_1^{(j)}$ is its invariant complement. 
For any $\textbf{x}\in \V_1$ we write 
\begin{equation}    \textbf{x}=\textbf{x}_{0}^{(j)}+\textbf{x}_{1}^{(j)}, j=1,2,...,m.
\end{equation}
where $\textbf{x}_{i}^{(j)} \in \V_{i}^{(j)}, i=1,2,...,m$ from above. 

\begin{lemma} It holds that
\emph{$\inf_{\textbf{x}\in \V_1, \|\textbf{x}_1\|\ge r} \left( \|\textbf{x}_{1}^{(1)}\|+\cdots+\|\textbf{x}_{1}^{(m)}\| \right) = Lr$} for some $L>0$
\end{lemma}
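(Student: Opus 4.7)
The plan is to show that $f(\textbf{x}) := \|\textbf{x}_{1}^{(1)}\|+\cdots+\|\textbf{x}_{1}^{(m)}\|$ defines a norm on the finite-dimensional space $\V_1$, whence by equivalence of norms $f(\textbf{x}) \geq L\|\textbf{x}\|$ for some $L>0$, which immediately yields the claimed lower bound. Note $f$ is clearly a continuous seminorm: each projection $\textbf{x}\mapsto \textbf{x}_1^{(j)}$ is continuous and linear, and $\|\cdot\|$ is a norm, so $f$ is subadditive and positively homogeneous.

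The only content of the lemma is that $f$ is definite, i.e.\ $f(\textbf{x})=0$ forces $\textbf{x}=0$. Suppose $f(\textbf{x})=0$ for some $\textbf{x}\in\V_1$. Then $\textbf{x}_1^{(j)}=0$ for every $j$, so $\textbf{x}\in\V_0^{(j)}$, which means $\textbf{x}$ is fixed by the $\SL^{(j)}(2,\R)$-action for every $j=1,\ldots,m$. The next step — and the only real content of the argument — is to verify that the subgroups $\SL^{(j)}(2,\R)$, $j=1,\ldots,m$, jointly generate $\SL(m+1,\R)$. Once this is established, $\textbf{x}$ is $\SL(m+1,\R)$-fixed, so $\textbf{x}\in\V_0$, and hence $\textbf{x}\in\V_0\cap\V_1=\{0\}$.

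To see the generation claim, pass to the Lie algebra. The Lie algebra of $\SL^{(j)}(2,\R)$ contains $E_{j,m+1}$, $E_{m+1,j}$, and $E_{j,j}-E_{m+1,m+1}$. Taking brackets among these generators for varying $j$ yields $[E_{i,m+1},E_{m+1,j}]=E_{ij}$ whenever $i\neq j$, together with the diagonal elements $E_{j,j}-E_{m+1,m+1}$. These span $\mathfrak{sl}(m+1,\R)$, so the subgroups generate the connected group $\SL(m+1,\R)$.

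Having concluded $f$ is a norm on the finite-dimensional vector space $\V_1$, equivalence with $\|\cdot\|$ gives a constant $L>0$ (take $L$ to be the minimum of $f$ on the unit sphere of $\V_1$, which exists and is positive by compactness and definiteness) such that $f(\textbf{x})\geq L\|\textbf{x}\|$ for all $\textbf{x}\in\V_1$. Restricting to $\|\textbf{x}\|\geq r$ gives $f(\textbf{x})\geq Lr$, with equality attained in the infimum by scaling a minimizer on the unit sphere by $r$. The only real step is the generation argument above, which is a direct Lie-algebra computation.
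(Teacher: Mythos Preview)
Your proof is correct and follows the same overall structure as the paper's: both reduce the claim to showing $\bigcap_{j=1}^m \V_0^{(j)}=\{0\}$ in $\V_1$, and then conclude via compactness of the unit sphere (equivalently, equivalence of norms on a finite-dimensional space).

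The one genuine difference lies in how the triviality of $\bigcap_j \V_0^{(j)}$ is established. The paper observes (via the decomposition lemma for $a_t^{-1}u_v^{-1}$) that any vector fixed by all the $\SL^{(j)}(2,\R)$'s is fixed by $UA$, and then invokes the fact that $UA$ is an \emph{epimorphic} subgroup of $\SL(m+1,\R)$ (citing an external reference) to conclude the vector is $G$-fixed. You instead give a direct, self-contained Lie-algebra computation showing that the $\SL^{(j)}(2,\R)$'s already generate all of $\SL(m+1,\R)$, bypassing the epimorphic property entirely. Your route is more elementary and avoids the external citation; the paper's route, on the other hand, illustrates why the group $UA$ itself (rather than the larger group generated by the $\SL^{(j)}$'s) is the relevant object for the surrounding argument.
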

\begin{proof}
We recall that $UA$ (see \eqref{eq:def of the subgroups UADelta}) is an  epimorphic subgroup, cf. \cite{Shah2000OnAO}. This simply means that if $\textbf{x}\in \V_1$ is fixed by $UA$, then it's fixed by all of $\SL(m+1,\R)$. But in such case, $\textbf{x}=0$, since $\V_1$ is an invariant complement of the fixed vectors. In particular, this implies that $\bigcap_{i=1}^m\V_{0}^{(i)}=\{0\}$ using the decomposition in Lemma \ref{change of order lemma}. Since the sphere of radius $1$ is compact and as $\bigcap_{i=1}^m\V_{0}^{(i)}=\{0\}$, we have $$\inf_{\textbf{x}_1\in \V_1, \|\textbf{x}_1\|= 1} \left( \|\textbf{x}_{1}^{(1)}\|+\cdots+\|\textbf{x}_{1}^{(m)}\| \right) = L > 0.$$ 
By scaling, it is easy to see for $r \ge 1$, 
\begin{align*}
    \inf_{\textbf{x}_1\in \V_1, \|\textbf{x}_1\|= r} \left( \|\textbf{x}_{1}^{(1)}\|+\cdots+\|\textbf{x}_{1}^{(m)}\| \right) 
%    =& \frac{r}{1}\inf_{w_1\in V_1, \|w_1\|= r} \left( \|w_{1}^{(1)}\|+\|w_{1}^{(2)}\| \right) \\
    = & r \inf_{\textbf{x}_1\in \V_1, \|\textbf{x}_1\|= 1} \left( \|\textbf{x}_{1}^{(1)}\|+\cdots+\|\textbf{x}_{1}^{(m)}\| \right)  \\
    =& Lr>0.
\end{align*}
\end{proof}
Now take $\textbf{x}\in \V_1$ and fix $j$ such that $\|\textbf{x}^{(j)}_1\|\geq Lr$. We pick a permutation $\sigma:\{1,...,m\}\to\{1,...,m\}$ such that $\sigma(m)=j$. Using Lemma \ref{change of order lemma} we write \begin{align}\label{eq:chose permutation for j with large component}
a_t^{-1}u_v^{-1}\textbf{x}=\prod_{l=1}^{m-1}&\pi^{(\sigma(l))}\left(
   \begin{bmatrix}
   t^{\frac{1}{m}}&0\\
   0&t^{-\frac{1}{m}}
 \end{bmatrix}
 \begin{bmatrix}
   1&0\\
   -t^{-\frac{m-l}{m}}v_{\sigma(l)}&1
 \end{bmatrix}\right)
 \pi^{(j)}
 \left(
   \begin{bmatrix}
   t^{\frac{1}{m}}&0\\
   0&t^{-\frac{1}{m}}
 \end{bmatrix}
 \begin{bmatrix}
   1&0\\
   -v_{j}&1
 \end{bmatrix}\right)\textbf{x}.
\end{align}
For a bounded set $K\subseteq \R^m$ there exist intervals $\text{I}_1,...,\text{I}_m$ such that $$\left(\left[0,\frac{1}{\sqrt{m}}t^{\frac{1-\chi}{m}}\right]+\text{I}_1 \right)\times...\times \left(\left[0,\frac{1}{\sqrt{m}}t^{\frac{1-\chi}{m}}\right]+\text{I}_m \right)\subseteq \text{B}\left(t^{\frac{1-\chi}{m}}\right)+K.$$Then \eqref{eq:what we need to show for lem 3.6} is obtained by applying Lemma \ref{lem:main statement for expansion of sl2 reps} as follows: We decompose $\V_1^{(j)}$ into $\SL^{(j)}(2,\R)$-irreducible representations, and we assume (without loss of generality, as all norms are equivalent) that our norm $\|\cdot\|$ on $\V_1$ is obtained by taking the sup norm with respect to a basis of $\V_1$ composed out of a basis for $\V_0^{(j)}$ and bases for each of the $\SL^{(j)}(2,\R)$-irreducible spaces.  Then, using Lemma \ref{lem:main statement for expansion of sl2 reps},\eqref{enu:when pi is irreduc}, we have for all $\tau_j\in \text{I}_j$  that
\begin{equation*}
     \sup_{v_j\in[0,t^{\frac{1-\chi}{m}}]+\tau_j}\left \|\pi^{(j)}\left(\begin{bmatrix}
t^{\frac{1}{m}} & 0 \\
0 & t^{-\frac{1}{m}} 
\end{bmatrix}
\begin{bmatrix}
1 & 0 \\
-y & 1 
\end{bmatrix}\right)\textbf{x} \right \|\gg t^{-\frac{\chi}{m}}r,
\end{equation*}
and by further applying Lemma \ref{lem:main statement for expansion of sl2 reps},\eqref{enu:when pi not necessearily irred} when taking the supremum of \eqref{eq:chose permutation for j with large component} over the parameters $\left(\left[0,\frac{1}{\sqrt{m}}t^{\frac{1-\chi}{m}}\right]+\text{I}_1 \right)\times...\times \left(\left[0,\frac{1}{\sqrt{m}}t^{\frac{1-\chi}{m}}\right]+\text{I}_m \right)$, we obtain \eqref{eq:what we need to show for lem 3.6}. 

 \subsection{The non-escape of mass}\label{sec:non escape of mass}
Let $\frak g$ be the Lie algebra of $G=\SL(m+1,\R)$. For positive integers $d$ and $n$, denote by
$\mathcal {P}_{d,n}(G)$ the set of functions $\varphi: \R^n \to G$ such that for any $\textbf{a}, \textbf{b} \in \R^n$, the map
\begin{equation*}
   \R\ni \tau \mapsto \Ad(\varphi(\tau \textbf{a} +\textbf{b}))\in \GL(\frak{g})
\end{equation*}
is a polynomial of degree at most $d$ when the above linear transformation is represented by some basis of $\frak{g}$.

Let $\V_G = \sum_{i=1}^{\dim \frak g} \bigwedge^i \frak{g}$. We consider the natural action of $G$ on $\V_G$ induced from the Ad-representation.  Fix a norm $\|\cdot\|$ on $\V_G$. For a Lie subgroup H of $G$ with Lie algebra $\frak{h}$, take a unit vector $p_\text{H} \in \bigwedge^{\dim \frak h} \frak{h}$.

\begin{theorem}[Special case of the theorems 2.1 and 2.2 in \cite{Shah1996LimitDO}, combined]\label{Shah dichotomy theorem}
There exist closed subgroups $U_i (i=1,2,...,l)$ such that each $U_i$ is the unipotent radical of a parabolic subgroup, $U_i\Ga$ is compact in $X=G/\Ga$ and for any $d,n\in \N$, $\e,\de>0$, there exists a compact set $C \subset G/\Ga$ such that for any $\varphi \in \mathcal{P}_{d,n}(G)$ and a bounded open convex set $D\subset \R^n$, one of the following holds:
\begin{enumerate}
    \item there exist $\gamma\in \Ga$ and $i=1,...,l$ such that $\sup_{v\in D}\|\varphi(v)\gamma.p_{U_i}\| \le \de$; 
    \item $\vol(v\in D:\varphi(v)\Ga \notin C)< \e \vol(D)$, where $\vol$ is the Lebesgue measure on $\R^n$.
\end{enumerate}
\end{theorem}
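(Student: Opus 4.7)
The plan is to derive this dichotomy as a combined consequence of Theorems 2.1 and 2.2 of Shah~\cite{Shah1996LimitDO}. First I would set up the standard linearization framework: for each closed subgroup $U$ which is the unipotent radical of a parabolic subgroup of $G$, the Lie subalgebra $\frak u\subset\frak g$ yields a wedge vector $p_U\in\bigwedge^{\dim\frak u}\frak g\subset\V_G$, and the $\Ga$-orbit of $p_U$ is discrete. A trajectory $\varphi(v)\Ga$ lies close to the closed orbit $U\gamma^{-1}\Ga$ precisely when $\|\varphi(v)\gamma.p_U\|$ is small, so outcome (1) of the dichotomy is exactly the statement that $\varphi(v)\Ga$ stays uniformly near one of finitely many ``dangerous'' closed unipotent orbits.

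Next I would enumerate the $\Ga$-conjugacy classes of subgroups $U_i$ that we need. By reduction theory for the arithmetic lattice $\Ga\le G=\SL(m+1,\R)$, only finitely many $\Ga$-conjugacy classes of unipotent radicals of parabolic subgroups yield compact orbits $U_i\Ga$ in $X$; I would fix representatives $U_1,\dots,U_l$ once and for all, independently of the other parameters $d,n,\e,\de$. A unit vector $p_{U_i}$ in the corresponding top exterior power is then also fixed.

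The core step is to invoke the quantitative non-divergence principle of Dani-Margulis~\cite{Dani1993LimitDO} as refined by Shah. For $\varphi\in\mathcal{P}_{d,n}(G)$ the coordinates of $v\mapsto\varphi(v)\gamma.p_{U_i}$ in any fixed basis of $\V_G$ are polynomials of degree bounded in terms of $d$, hence form a $(C,\al)$-good family in the Kleinbock-Margulis sense. Quantitative non-divergence then supplies a compact set $C\subset X$, depending only on $d,n,\e,\de$, such that for every $\varphi\in\mathcal{P}_{d,n}(G)$ and every bounded open convex $D\subset\R^n$, either there exist $\gamma\in\Ga$ and $i\in\{1,\dots,l\}$ witnessing $\sup_{v\in D}\|\varphi(v)\gamma.p_{U_i}\|\le\de$ (outcome (1)), or else $\vol(\{v\in D:\varphi(v)\Ga\notin C\})<\e\vol(D)$ (outcome (2)).

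The main obstacle will be to ensure that the same finite list $\{U_1,\dots,U_l\}$ suffices uniformly across all $\varphi\in\mathcal{P}_{d,n}(G)$ and all convex $D$, rather than letting the witnessing subgroup depend on the degree or the specific trajectory. Shah handles this via an inductive argument on a height filtration of $\V_G$ by $\Ga$-rational subspaces: whenever the $(C,\al)$-good bound at one stratum is violated, one passes to a proper $\Ga$-rational invariant subspace which is again controlled by a representation on a wedge power of $\frak g$, and the induction terminates because dimensions strictly drop. This inductive mechanism, together with the Ratner-style classification of invariant subvarieties used to identify the relevant $U_i$, is what makes the outcome (1) require only finitely many candidates.
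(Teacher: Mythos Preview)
The paper does not prove this theorem at all; it is quoted from Shah~\cite{Shah1996LimitDO} as a black box (note the attribution in the theorem heading) and then applied in Sections~\ref{sec:non escape of mass} and~\ref{proof of equidistribution}. There is therefore no ``paper's own proof'' to compare your proposal against.

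As a standalone sketch of how Shah's result is established, your outline is broadly accurate: the linearization via wedge vectors $p_{U_i}$, the discreteness of $\Ga.p_{U_i}$, the $(C,\alpha)$-good property of polynomial maps, and the quantitative non-divergence machinery are indeed the main ingredients. One small slip: you justify the finiteness of the list $U_1,\dots,U_l$ by invoking ``reduction theory for the arithmetic lattice $\Ga$'', but the paper allows an arbitrary lattice $\Ga\le\SL(m+1,\R)$. The finiteness of $\Ga$-conjugacy classes of unipotent radicals of parabolics holds for any lattice in a connected semisimple group without compact factors (finitely many cusps), so the conclusion survives, but the phrasing should not presuppose arithmeticity.
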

%For $q \in \Ga=\SL(3,\Z)$, we shall consider the following polynomial map: Let $n=2, t>0$, $v\in \R^m$ and define $q(t,v)=q(t,v,q):=q a_t^{-1} u_v^{-1}$. 

Fix  $g_1,g_2\in\SL(m+1,\R)$. We shall consider the family of polynomial maps  appearing in the integration along the measures $\mu^\circ_{T,g_1,g_2}$ which are given by \begin{equation}\label{eq:definition of q t v}
   \varphi_t(v):= a_t^{-1} u_v^{-1} g_0,
\end{equation}
where $g_0\in \SL(m+1,\R)$,  $t>0$ and $v\in\R^m$.
For each fixed $t$, the map $\varphi_t(\cdot)$ is in $\mathcal{P}_{2,m}(G)$. 
Using Lemma \ref{our lemma expansion inequality for applying shah dichotomy}, we will show that $\varphi_t(v)$ fails the condition 1 of the Theorem \ref{Shah dichotomy theorem} in the range appearing in the integration along $\mu^\circ_{T,g_1,g_2}$. This will be the key fact that allows to prove the non-escape of mass and also the equidistribution, see Section \ref{proof of equidistribution}. 

We have the following statement which will enable to verify the requirement of Lemma \ref{our lemma expansion inequality for applying shah dichotomy} for a lower bound of the norm of the projection to the space of non-fixed vectors of the vectors $\gamma.p_{U_i},\gamma\in\Ga,i=1,...,l.$
%Extend representation $\pi:G \to \text{GL}(\V_G)$ as in the theorem above to a complex representation by replacing $\V_G$ with $\V_G \otimes \C$, and 

Denote by $\mathcal{H}_{\Ga}$ the family of all proper closed connected subgroups H of $G$ such that $\Ga \cap \text{H}$ is a lattice in H, and $\Ad(\text{H} \cap \Ga)$ is Zariski-dense in $\Ad(\text{H})$. We have the following important theorem:

\begin{theorem}[Proposition 2.1  and Theorem 3.4 in \cite{Dani1993LimitDO}]\label{ratner's theorem on discreteness}
The set $\mathcal{H}_{\Ga}$ is countable. For any $\emph{\text{H}}\in \mathcal{H}_{\Ga}, \Ga.p_\emph{\text{H}}$ is discrete.    
\end{theorem}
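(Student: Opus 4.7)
The plan is to prove both parts through a single key observation: every $H \in \mathcal{H}_\Ga$ is determined by its intersection $\La_H := H \cap \Ga$. Indeed, the Zariski-density hypothesis forces $\Ad(H)$ to equal the identity component of the Zariski closure of $\Ad(\La_H)$ inside $\GL(\mathfrak{g})$, from which the connected subgroup $H$ is recovered via its adjoint image. For the countability assertion, I would invoke the classical fact (Raghunathan) that a lattice in a connected Lie group is finitely generated, so the assignment $H \mapsto \La_H$ would inject $\mathcal{H}_\Ga$ into the countable collection of finitely generated subgroups of the countable group $\Ga$, proving countability at once.

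To prove discreteness of $\Ga \cdot p_H$ in $\V_G$, I would argue by contradiction, assuming a sequence $\gamma_n \in \Ga$ with $\gamma_n.p_H \to p_H$ and $\gamma_n.p_H \neq \pm p_H$. Since the stabilizer of the line $\R p_H$ under the exterior-power action is precisely $N_G(H)$, this convergence would translate to $\gamma_n \to e$ in $G/N_G(H)$. Using a smooth local cross-section at the identity coset, I would factor $\gamma_n = \e_n n_n$ with $n_n \in N_G(H)$ and $\e_n \to e$ in $G$. For each $\la \in \La_H$, since $n_n$ normalizes $H$, I would then write $\gamma_n \la \gamma_n^{-1} = \e_n (n_n \la n_n^{-1}) \e_n^{-1}$, where $n_n \la n_n^{-1}$ lies in $H$.

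The next step would be to deduce that $\gamma_n \la \gamma_n^{-1} \in \La_H$ for each element $\la$ of a fixed finite generating set of $\La_H$ and all sufficiently large $n$. Combined with Zariski density of $\Ad(\La_H)$ in $\Ad(H)$, this would force $\gamma_n$ to normalize $H$, giving $\gamma_n \in N_G(H)$ and thereby contradicting the standing assumption $\gamma_n.p_H \neq \pm p_H$. The idea is to combine discreteness of $\Ga$ with a thickening argument: the conjugate $\gamma_n \la \gamma_n^{-1}$ belongs to $\Ga$ and (after controlling $n_n \la n_n^{-1}$ inside a compact region of $H$) lies within an $O(\|\e_n - e\|)$-neighborhood of $H$, hence must belong to $H \cap \Ga = \La_H$ once the perturbation drops below a local separation constant between $\La_H$ and $\Ga \setminus \La_H$ near $\la$. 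The principal technical obstacle I anticipate is precisely controlling $n_n \la n_n^{-1}$ when $n_n$ escapes to infinity in $N_G(H)$; the standard way to overcome this is to replace individual generators by suitable commutators (whose conjugates remain in compact sets) or to work within a compact fundamental domain for $\La_H$ in $H$, exploiting that $\La_H$ has finite covolume. This is essentially where Proposition~2.1 and Theorem~3.4 of \cite{Dani1993LimitDO} do their actual work, and any clean write-up of the proof would need to make this compactness step rigorous.
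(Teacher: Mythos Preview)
The paper does not prove this statement at all: it is quoted verbatim as a black box from Dani--Margulis \cite{Dani1993LimitDO}, and the paper only \emph{uses} the conclusion (discreteness of $\Ga.p_{U_i}$) in the argument around \eqref{inf of the projection is at least r}. So there is nothing in the paper to compare your proof against.

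That said, your countability argument is essentially the standard one and is correct: the map $\text{H}\mapsto \La_{\text{H}}:=\text{H}\cap\Ga$ is injective by the Zariski-density hypothesis, $\La_{\text{H}}$ is a lattice in a connected Lie group hence finitely generated, and finitely generated subgroups of the countable group $\Ga$ form a countable set.

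Your discreteness argument, however, has a genuine gap, and it is exactly the one you flag yourself. After writing $\gamma_n=\e_n n_n$ with $\e_n\to e$ and $n_n\in N_G(\text{H})$, you have no control over $n_n$: it may run off to infinity in $N_G(\text{H})$, so $n_n\la n_n^{-1}$ need not stay in any compact subset of $\text{H}$, and the ``small perturbation lands back in $\La_{\text{H}}$'' step cannot be justified. Neither of your proposed fixes closes this. Passing to commutators does not help, since conjugation by an unbounded $n_n$ can still send commutators to infinity in $\text{H}$; and invoking a fundamental domain for $\La_{\text{H}}$ in $\text{H}$ does not give a compact set containing $n_n\la n_n^{-1}$ unless $\La_{\text{H}}$ is cocompact, which is not assumed. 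The actual argument in \cite{Dani1993LimitDO} proceeds differently: one first shows (using that $\text{H}\Ga$ is closed and that $\text{H}\lhd N_G(\text{H})$) that $N_G^1(\text{H})\Ga$ is closed in $G$, where $N_G^1(\text{H})$ is the stabilizer of $p_{\text{H}}$, and then discreteness of $\Ga.p_{\text{H}}$ follows from the fact that the orbit map $G/N_G^1(\text{H})\to G.p_{\text{H}}$ is a homeomorphism for this algebraic action. This sidesteps the unboundedness of $n_n$ entirely.
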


Consider the finite set of unipotent radicals of parabolic subgroups, denoted $U_1,...,U_l$ appearing in Theorem \ref{Shah dichotomy theorem}.  By Theorem \ref{ratner's theorem on discreteness}, we have that  $\Ga.p_{U_i}$ is discrete in $\V_G$ for all $i$.

Write $\V_G=\V_0 \bigoplus \V_1$, where $\V_0$ is the space of vectors fixed by $G$ and $\V_1$ is its $G$-invariant complement. Recall that $\Pi$ denotes the projection of $\V_G$ onto $\V_1$ with kernel $\V_0$. By Lemma 17 in \cite{Gorodnik2003LatticeAO} it holds that $\Pi(\Ga.p_{U_i})$ is discrete for all $i$. Since $p_{U_i}$ is not fixed by $G$ (this is because the action is through conjugation and $U_i$'s are not normal subgroups in the simple group $G$), $\Pi(\Ga.p_{U_i})$ does not contain $0$. So it follows that 
\begin{equation}\label{inf of the projection is at least r}
    \inf_{\textbf{x}\in \cup_{i=1}^l \Ga.p_{U_i}} \|\Pi(\textbf{x})\|:=r>0.
\end{equation}

\subsubsection{Proof of $\eta{(\infty)}=0$}\label{section 4.2}

\iffalse
Therefore, for any $x\in \cup_{i=1}^r \Ga.p_{U_i}$,
\begin{equation}
    \lim_{t\to 0} \sup_{v\in D(\frac{1}{2}\st^{\frac{1-\chi}{m}})} \|a_t^{-1}u_v^{-1}g_1.x\| =\infty.
\end{equation}
\fi

Recall that $\eta$ is a weak-* limit of the measures
\begin{align*}
    \mu^\circ_{T,g_1,g_2}(F):=& \frac{1}{\mu \left(V_{T}[g_1,g_2] \right)} \int_{\al_T}^{\be_T} \int_{D_{T,t}} F \left(a_t^{-1} u_v^{-1} g_0\Gamma \right) dv \frac{1}{t^{m+2}}dt,~F\in C_c(G/\Ga),
\end{align*}
where $0<\al_T<\be_T$ are the two positive roots of \eqref{eq:radius for connected component} (see Lemma \ref{lem:roots lemma} with $q=I_m$), and $D_{T,t}$ is the ellipsoid \eqref{eq:ellipsoid for connected component}. We note that the ellipsoid $D_{T,t}$ is centered at 
\begin{equation}\label{eq:def of translation}
  \xi_T(t):=\det(\text{g}_1)\text{v}_1 -\text{v}_2\text{g}_2^{-1}t^{\frac{1}{m}+1} \in \R^m.  
\end{equation}Then  $D_{T,t}$ contains the ball $\text{B}(\rho_T(t))+\xi_T(t)$ with the same center whose radius $\rho_T(t)$ is equal to the \textit{shortest radius} of the ellipse, and $\rho_T$ has the estimate:
\begin{equation}\label{eq:radius of the contained ball}
\rho_T(t):=C_{g_1,g_2}\sqrt{R_T(t)}=C_{g_1,g_2}\sqrt{ -\det(\text{g}_1\text{g}_2)^{-2} t^{\frac{2}{m}+2}+t^{\frac{2}{m}}T^2-\|\text{g}_1 \text{g}_2\|^2},
\end{equation}
for some constant $C_{g_1,g_2}>0$. The  displacement  $\xi_T(t)$ has two parts: one part ($-\det(\text{g}_1)\text{v}_1$) is constant and the other part ($-\text{v}_2\text{g}_2^{-1}t^{\frac{1}{m}+1}$) involves $t$ but is  bounded when $t\in(0,1)$. To use Lemma \ref{our lemma expansion inequality for applying shah dichotomy} in our proof for $\eta(\infty)=0$, we require that $\rho_{T}(t)>t^{\frac{1-\chi}{m}}$. But, when $t$ approaches to either $\al_T$ or to $\be_T$, $\rho_{T}(t)$ becomes too small. Thus we use the following truncation -- for a fixed  $F\in C_c(G/\Ga)$, $\e_1,\e_2\in(0,1)$ there is $\ka=\ka(\e_1,\e_2)>0$ such that \begin{equation}\label{eq:truncated integral}
\mu^\circ_{T,g_1,g_2}(F)= \frac{1}{\mu \left(V_{T}[g_1,g_2] \right)} \int_{\al_{\de}(T,\e_1)}^{\la(T,\e_2)} \int_{D_{T,t}} F \left(a_t^{-1} u_v^{-1} g_0\Gamma \right) dv \frac{1}{t^{m+2}}dt+O(T^{-\ka}).
\end{equation}
The above estimate follows by Lemmata \ref{lemma for integral from a to a delta} and \ref{lem:integral from c to b} together with the estimate of $\mu(V_T[g_1,g_2])\asymp T^{m(m+1)}$.
Here $\al_{\de}\text{ and }\la$ are given by \eqref{eq:defintion of a_delta} and \eqref{eq:definition of c} with $q=I_m.$
\begin{lemma}\label{lem:the domain Dt,T has a ball of sufficiently large radius in it}
  Fix $\chi\in(0,1)$. Then, there exist $0<\e_1<\e_2<1$ such that for all $T$ large enough and all $t\in(\al_{\de}(T,\e_1),\la(T,\e_2))$, it holds that
   \begin{equation*}
   \rho_{T}(t)>t^{\frac{1-\chi}{m}}.  
   \end{equation*} 
  \iffalse
  For all $T$ large enough its holds that for all $t\in(\al_{\de,T},\la_T)$ 
   \begin{equation}
     \sqrt{ -\det(\text{g}_1\text{g}_2)^{-2} t^{\frac{2}{m}+2}+t^{\frac{2}{m}}T^2-\|\text{g}_1 \text{g}_2\|^2}\geq t^{\frac{1-\chi}{m}}  
   \end{equation}
   \fi
\end{lemma}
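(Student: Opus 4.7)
The plan is to reduce the required inequality $\rho_T(t) > t^{(1-\chi)/m}$ to $R_T(t) \gg t^{2(1-\chi)/m}$ (absorbing the constant $C_{g_1,g_2}$), and to establish this on $(\al_\de, \la)$ by splitting into two regimes: $t$ close to the lower root $\al_T$, where $R_T$ vanishes linearly and must be controlled via a Taylor expansion at the root, and $t$ bounded away from $\al_T$, where $R_T$ is already comparable to its dominant piece $T^2 t^{2/m}$.

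Concretely, I would fix $\e_1 \in (0, 1-\chi)$ and $\e_2 \in (\e_1, 1)$, and decompose $(\al_\de, \la) = I_1 \cup I_2$ with $I_1 = (\al_\de, 2^{m/2}\al_T)$ and $I_2 = [2^{m/2}\al_T, \la)$; both are nonempty for $T$ large, since $\al_\de/\al_T \to 1$ (as $\de/\al_T^{1/m} \asymp T^{-\e_1} \to 0$) while $\al_T \asymp T^{-m}$ and $\la \asymp T^{-\e_2 m/(m+1)}$, so $\al_T \ll \la$. On $I_1$, a Taylor expansion of $f(s) := R_T(s^m)$ around the root $s = \al_T^{1/m}$ (paralleling the computation in the proof of Lemma \ref{lemma for integral from a to a delta}) together with the monotonicity of $R_T$ on $(0, \theta_T)$ from Lemma \ref{lem:roots lemma} yields
\begin{equation*}
R_T(t) \ge R_T(\al_\de) = f'(\xi)\,\de \asymp T^2 \al_T^{1/m}\,\de \asymp T^{-\e_1},
\end{equation*}
for some $\xi \in (\al_T^{1/m}, \al_\de^{1/m})$. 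On the other hand, $t \asymp \al_T \asymp T^{-m}$ on $I_1$ gives $t^{2(1-\chi)/m} \asymp T^{-2(1-\chi)}$, so the required $T^{-\e_1} > T^{-2(1-\chi)}$ holds for all sufficiently large $T$ since $\e_1 < 1-\chi < 2(1-\chi)$.

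On $I_2$, the condition $t^{2/m} \ge 2\al_T^{2/m}$ combined with the identity $T^2 \al_T^{2/m} = \|\text{g}_1 \text{g}_2\|^2 + \det(\text{g}_1 \text{g}_2)^{-2}\al_T^{2/m+2}$ gives $T^2 t^{2/m} \ge 2\|\text{g}_1 \text{g}_2\|^2$, so
\begin{equation*}
R_T(t) \ge \tfrac{1}{2} T^2 t^{2/m} - \det(\text{g}_1 \text{g}_2)^{-2}\,t^{2/m+2} \gg T^2 t^{2/m},
\end{equation*}
since $t \le \la \ll 1$ for large $T$ makes the correction negligible. The required $T^2 t^{2/m} > t^{2(1-\chi)/m}$ is equivalent to $T^2 > t^{-2\chi/m}$, and for $t \ge 2^{m/2}\al_T \asymp T^{-m}$ the worst case gives $t^{-2\chi/m} \lesssim T^{2\chi} < T^2$, using $\chi < 1$.

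The delicate part is the regime $I_1$ near the root, where $R_T$ is tiny and only the Taylor lower bound $T^{-\e_1}$ is available; this is precisely why one must shift from $\al_T$ to the slightly displaced $\al_\de$ (with $\de \asymp T^{-1-\e_1}$), and why the choice of $\e_1$ is constrained by the expansion exponent $\chi$ appearing later in Lemma \ref{our lemma expansion inequality for applying shah dichotomy}: only for $\e_1$ sufficiently small does the Taylor lower bound beat $t^{2(1-\chi)/m}$, and the condition $\e_1 < 1-\chi$ is exactly what threads this coupling.
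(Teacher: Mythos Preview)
Your argument is correct, but it takes a more elaborate route than the paper's. The paper does not split the interval: it simply bounds the right-hand side uniformly by $t^{(1-\chi)/m}\le \la(T,\e_2)^{(1-\chi)/m}\asymp T^{-\e_2(1-\chi)/(m+1)}$, and the left-hand side uniformly by monotonicity, $\sqrt{R_T(t)}\ge \sqrt{R_T(\al_\de)}\asymp T^{-\e_1/2}$ (exactly your $I_1$ estimate), and then chooses $\e_1,\e_2$ with $\tfrac{\e_2(1-\chi)}{m+1}>\tfrac{\e_1}{2}$. Your regime $I_2$, where you exploit $R_T(t)\gg T^2 t^{2/m}$, is entirely superfluous once one accepts the crude upper bound $t\le \la$ for the right-hand side; the paper trades the sharper pointwise comparison you do on $I_2$ for a weaker constraint coupling $\e_1$ to $\e_2$ and $m$. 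What your approach buys is a cleaner constraint on $\e_1$ alone ($\e_1<2(1-\chi)$, with $\e_2$ essentially free), whereas the paper's single-step argument forces $\e_1$ to depend on $\e_2/(m+1)$; but since only the \emph{existence} of some admissible $\e_1,\e_2$ is needed downstream, this refinement has no payoff here.
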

\begin{proof}
To prove the inequality, we bound from below the left hand side, and bound from above the right hand side. For the right hand side, by recalling  \eqref{eq:definition of c} defining  $\la$, we get for $t\in(\al_{\de}(T,\e_1),\la(T,\e_2))$ that
\begin{equation}\label{eq:asymp for righthand side}
    \frac{1}{T^\frac{\e_2(1-\chi)}{m+1}}\asymp \la(T,\e_2)^{\frac{1-\chi}{m}}\geq t^\frac{1-\chi}{m}.
\end{equation}We now estimate $\rho_{T}(t)=C_{g_1,g_2}\sqrt{R_T(t)}$ from below. Recall that $R_T(t)$ is monotonically increasing in $(0,\theta_T)$  and  $\theta_T\asymp T$ (see the proof of Lemma \ref{lem:roots lemma}). Since $\al_{\de}(T,\e_1)\text{ and }\la(T,\e_2)$ converge to zero as $T\to\infty$, have that $(\al_{\de}(T,\e_1),\la(T,\e_2))\subseteq(0,\theta_T)$. Thus $\forall t\in(\al_{\de}(T,\e_1),\la(T,\e_2))$ it holds \begin{align}
\sqrt{R_T(t)}\geq\sqrt{R_T(\al_\de(T,\e_1))}=&\sqrt{-\det(\text{g}_1\text{g}_2)^{-2} \al_{\de}(T,\e_1)^{\frac{2}{m}+2}+\al_{\de}(T,\e_1)^{\frac{2}{m}}T^2-\|\text{g}_1 \text{g}_2\|^2}\nonumber  
\end{align}
We now determine the asymptotics of the terms below the above square-root. Upon recalling the definition of $\al_{\de}$ in \eqref{eq:defintion of a_delta} and using the estimate \eqref{eq:asymp for a when A small then sqrt T}, we see that 
$$\al_{\de}(T,\e_1)^{\frac{2}{m}+2}\asymp \al_{T}^{\frac{2}{m}+2}\asymp \frac{1}{T^{2+2m}}$$
For the second term, 
 $$\al_{\de}(T,\e_1)^\frac{2}{m}=\left(\al_T^{\frac{1}{m}}+\frac{\|\text{g}_1 \text{g}_2\|}{T^{1+\e_1}}\right)^2\underbrace{=}_{\eqref{eq:asymp for a when A small then sqrt T}}\|\text{g}_1 \text{g}_2\|^2\left(\frac{1}{T^2}+\frac{2}{T^{2+\e_1}}\right)+O\left(\frac{1}{T^{2+2\e_1}}\right),$$which implies that 
$$\al_{\de}(T,\e_1)^{\frac{2}{m}}T^2-\|\text{g}_1 \text{g}_2\|^2\asymp \frac{1}{T^{\e_1}}$$
Then, 
\begin{equation}\label{eq:asymp for lefthand side}
    \sqrt{R_T(\al_\de(T,\e_1))}\asymp  \frac{1}{T^{\e_1/2}}.
\end{equation}
We are free to choose $\e_1$ and $\e_2$ in the interval $(0,1)$ as we wish. In particular, we as well may assume that $$\frac{\e_2(1-\chi)}{m+1}>\e_1/2.$$With the latter choice,  we obtain our claim by the estimates \eqref{eq:asymp for righthand side} and \eqref{eq:asymp for lefthand side}.
\end{proof}
We are now ready to prove $\eta(\infty)=0.$ 
%Let $\alpha=t^{0.3}$ and consider the ball $D_{q,t}(\alpha)$ which has the same center with $D$ but with radius $\alpha$. For sufficiently small 
By the above Lemma \ref{lem:the domain Dt,T has a ball of sufficiently large radius in it} and Lemma \ref{our lemma expansion inequality for applying shah dichotomy}, we conclude that the first outcome in Theorem \ref{Shah dichotomy theorem} for the translated ellipsoids $D=D_{T,t}$ fails in the range $t\in(\al_\de(T,\e_1),\la(T,\e_2))$ for all $T$ large enough. Here $\e_1,\e_2$ are fixed such that Lemma \ref{lem:the domain Dt,T has a ball of sufficiently large radius in it} holds.
Thus, by the second outcome of Theorem \ref{Shah dichotomy theorem}, for an  $\e>0$  there is a compact $C\subset G/\Ga$  such that
\begin{equation}\label{eq:small proporotion of visits outside a compact C}
    \vol(v\in D_{T,t}:\varphi_t(v)\Ga \notin C)< \e \vol(D_{T,t}),~\forall t\in(\al_\de(T,\e_1),\la(T,\e_2)).
\end{equation}
Now fix $x=g_0\Ga$, and let $F\in C_c(G/\Ga)$ with $\mathbf{1}_C \le F \le 1$. As $T\to \infty$,
\begin{align*}
\mu_{T,g_1,g_2}^\circ(F)=&\frac{1}{\mu \left(V_{T}[g_1,g_2] \right)}\int_{V_{T}[g_1,g_2]} F( h^{-1}g_0\Gamma) d\mu(h)\\
=& \frac{1}{\mu \left(V_{T}[g_1,g_2] \right)} \int_{\al_{\de}(T,\e_1)}^{\la(T,\e_2)} \int_{D_{T,t}} F \left(\varphi_t(v)\Ga \right) dv \frac{1}{t^{m+2}}dt+O(T^{-\ka})\tag{by \eqref{eq:truncated integral}}\\
\ge& \frac{1}{\mu \left(V_{T}[g_1,g_2] \right)} \int_{a_{\de}(T,\e_1)}^{\la(T,\e_2)} \int_{D_{T,t}} \mathbf{1}_C \left(\varphi_t(v)\Ga \right) dv \frac{1}{t^{m+2}}dt +O(T^{-\ka})\\
\ge& \frac{1}{\mu \left(V_{T}[g_1,g_2] \right)} \int_{a_{\de}(T,\e_1)}^{\la(T,\e_2)}  (1-\e)\vol(D_{T,t}) \frac{1}{t^{m+2}}dt+O(T^{-\ka}) \tag{by \eqref{eq:small proporotion of visits outside a compact C}}\\
=&(1-\e)\frac{1}{\mu \left(V_{T}[g_1,g_2] \right)} \int_{\al_T}^{\be_T} \vol(D_{T,t}) \frac{1}{t^{m+2}}dt +O(T^{-\ka})\tag {by Lemmata \ref{lemma for integral from a to a delta} and \ref{lem:integral from c to b}}\\
=&1-\e+O(T^{-\ka}).
\end{align*}
Therefore,
\begin{equation*}
    \liminf_{T\to \infty}\mu_{T,g_1,g_2}^\circ(F)\ge 1-\e
\end{equation*}
so that
\begin{equation*}
    \eta(\infty)\le\limsup_{T\to \infty}\mu_{T,g_1,g_2}^\circ(\text{support}(F)^c) \le \e.
\end{equation*}

Since $\e$ is arbitrary $\eta(\infty)=0$.

\subsection{Zero measure for the singular set -- concluding equidistribution}\label{proof of equidistribution}

Recall $U=\begin{bmatrix}
    I_m & 0\\
    \R^m & 1
\end{bmatrix}$. For a closed subgroup
H of $G$, denote
\begin{align*}
    &N(\text{H},U):=\{g\in G:Ug\subset g\text{H}\},\\
    &S(\text{H},U):=\cup_{\text{H}'\subsetneq \text{H}, \text{H}'\in \mathcal{H}_{\Ga}}N(\text{H}',U).
\end{align*}
Consider,
\begin{equation}
    Y:=\bigcup_{\text{H}\in \mathcal{H}_{\Ga}}N(\text{H},U)\Ga=\bigcup_{\text{H}\in \mathcal{H}_{\Ga}}[N(\text{H},U)-S(\text{H},U)]\Ga \subset G/\Ga,
\end{equation}
where $\mathcal{H}_
{\Ga}$ was defined above Theorem \ref{ratner's theorem on discreteness}.
The equality holds since for any $g\in N(\text{H},U)$, if $g$ is also in $S(\text{H},U)$, then $g$ must belong to $N(\text{H}',U)$ for some $\text{H}'\subsetneq H$ (note for Lie subgroups, this condition means $\dim \text{H}' < \dim \text{H}$) and $\text{H}'\in \mathcal{H}_{\Ga}$. Since $\text{H}'$ has strictly lower dimension, by repeating this argument we see eventually, $g$ will fall into some $N(\tilde{\text{H}}, U)-S(\tilde{\text{H}}, U)$ (with $S(\tilde{\text{H}}, U)$ possibly empty when $\tilde{\text{H}}$ has minimal dimension).

By Theorem 2.2 of \cite{Mozes1995OnTS}, in order to show that $\eta$ is the $G$-invariant probability measure $\mu_X$, it is sufficient to prove the following lemma.
\begin{lemma}\label{lem:eta of the singular set is zero}
    $\eta(Y)=0$.
\end{lemma}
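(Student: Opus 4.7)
The plan is to follow the standard linearization strategy of Dani--Margulis \cite{Dani1993LimitDO}, combining the expansion statement in Lemma \ref{our lemma expansion inequality for applying shah dichotomy} with the geometric input of Lemma \ref{lem:the domain Dt,T has a ball of sufficiently large radius in it}, mirroring the argument of Section \ref{section 4.2} for $\eta(\infty)=0$ but with the compact set $C$ replaced by a neighborhood of a compact piece of the singular set. First I would use Theorem \ref{ratner's theorem on discreteness} (countability of $\mathcal{H}_{\Ga}$) together with $\sigma$-additivity and inner regularity of the Radon measure $\eta$ to reduce the claim to proving $\eta(K)=0$ for a single fixed $H\in \mathcal{H}_{\Ga}$ and an arbitrary compact $K\subset [N(H,U)-S(H,U)]\Ga/\Ga$.

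Having fixed such $H$ and $K$, the next step would extract a positive lower bound on $\|\Pi(\ga.p_H)\|$. Since $G=\SL(m+1,\R)$ is simple and $H\subsetneq G$, the vector $p_H$ cannot be $G$-fixed, so $\Pi(p_H)\ne 0$. Applying Theorem \ref{ratner's theorem on discreteness} together with Lemma~17 of \cite{Gorodnik2003LatticeAO} (exactly as in the derivation of \eqref{inf of the projection is at least r}) would show that $\Pi(\Ga.p_H)$ is discrete in $\V_1$ and bounded away from $0$, so that
\begin{equation*}
   r:=\inf_{\ga\in\Ga}\|\Pi(\ga.p_H)\|>0.
\end{equation*}
With this $r>0$ in hand, I would appeal to the $p_H$-version of the linearization dichotomy (implicit in \cite{Dani1993LimitDO, Shah1996LimitDO}) to produce, for every $\e>0$, an open neighborhood $\Psi$ of $K$ in $G/\Ga$ and a constant $M=M(K,\Psi)>0$ such that for every $g_0\in G$, every polynomial $\varphi\in \mathcal{P}_{d,m}(G)$, and every bounded open convex $D\subset\R^m$, at least one of the following holds: either (i) there is $\ga\in\Ga$ with $\sup_{v\in D}\|\varphi(v)g_0\ga.p_H\|\le M$, or (ii) $\vol\{v\in D: \varphi(v)g_0\Ga\in\Psi\}<\e\,\vol(D)$.

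The key step is to rule out (i) for $\varphi=\varphi_t$ and $D=D_{T,t}$ on the truncated range $t\in(\al_{\de}(T,\e_1),\la(T,\e_2))$ for all sufficiently large $T$. Lemma \ref{lem:the domain Dt,T has a ball of sufficiently large radius in it} guarantees that $D_{T,t}$ contains the ball $\text{B}(t^{(1-\chi)/m})+\xi_T(t)$, and by \eqref{eq:def of translation} the centers $\xi_T(t)$ lie in a bounded subset of $\R^m$ depending only on $g_1,g_2$. Applying Lemma \ref{our lemma expansion inequality for applying shah dichotomy} with $\textbf{x}:=\ga.p_H$ (so $\|\Pi(\textbf{x})\|\ge r$) and $\nu:=M+1$ would then produce a point $v$ in that ball at which $\|\varphi_t(v)g_0\ga.p_H\|>M$, directly contradicting (i). Hence (ii) holds uniformly on the truncated range, and integrating as in \eqref{eq:truncated integral} would give $\mu^\circ_{T,g_1,g_2}(\Psi)\le\e+O(T^{-\ka})$. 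Passing to a subsequence realizing the weak-$*$ limit $\eta$ and using the portmanteau inequality for open sets then yields $\eta(K)\le\eta(\Psi)\le\e$, and since $\e>0$ was arbitrary, $\eta(K)=0$.

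The principal obstacle I expect is not computational but the invocation of linearization at the correct level of generality: Theorem \ref{Shah dichotomy theorem} as stated here handles only the special vectors $p_{U_i}$ attached to unipotent radicals of parabolic subgroups, whereas the argument above requires its analogue for $p_H$ with arbitrary $H\in\mathcal{H}_{\Ga}$. This stronger dichotomy is essentially present in \cite{Dani1993LimitDO, Shah1996LimitDO}, but the delicate point is verifying that the constants $M$ and $\Psi$ can be chosen so that they combine with the bound $t^{-\chi/m}$ from Lemma \ref{our lemma expansion inequality for applying shah dichotomy} in a uniform way across the truncated range $t\in(\al_{\de}(T,\e_1),\la(T,\e_2))$ as $T\to\infty$.
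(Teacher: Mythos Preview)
Your proposal is correct and follows essentially the same route as the paper. The paper states the needed generalization of Theorem~\ref{Shah dichotomy theorem} explicitly as Theorem~\ref{second dichotomy theorem of Shah} (a consequence of Proposition~5.4 in \cite{Shah1994LimitDO}), which is exactly the ``$p_H$-version of the linearization dichotomy'' you were concerned about; with that in hand, the paper's proof proceeds identically to yours, invoking Lemma~\ref{our lemma expansion inequality for applying shah dichotomy} and Lemma~\ref{lem:the domain Dt,T has a ball of sufficiently large radius in it} to rule out the first alternative on the truncated range, and then integrating to get $\eta(C)\le\e$.
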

    By the discreteness of $\mathcal{H}_{\Ga}$ (Theorem \ref{ratner's theorem on discreteness}), it suffices to show that for each fixed $\text{H}\in\mathcal{H}_{\Ga}$ it holds
    \begin{equation*}
        \eta([N(\text{H},U)-S(\text{H},U)]\Ga)=0.
    \end{equation*} Since $[N(\text{H},U)-S(\text{H},U)]\Ga$ is a countable union of compact subsets in $G/\Ga$ (See \cite{Mozes1995OnTS} Proposition 3.1), it suffices to show $\eta(C)=0$ for any compact subset $C$ of $[N(\text{H},U)-S(\text{H},U)]\Ga$.

The main tool in the proof of the latter statement is the following consequence of Proposition 5.4 in \cite{Shah1994LimitDO}. We use in the following the same notations as in Section \ref{sec:non escape of mass}.
\begin{theorem}\label{second dichotomy theorem of Shah}
    Let $d,n \in \N, \e>0, \emph{\text{H}}\in \mathcal{H}_{\Ga}$. For any compact set $C \subset [N(\emph{\text{H}}, U)-S(\emph{\text{H}},U)]\Ga$, there exists a compact set $F \subset \V_G$ such that for any neighborhood $\Phi$ of $F$ in $\V_G$, there exists a
neighborhood $\Psi$ of $C$ in $G/\Ga$ such that for any $\varphi \in \mathcal{P}_{d,n}(G)$ and a bounded open convex set
$D \subset \R^n$, one of the following holds:
\begin{enumerate}
    \item There exist $\gamma\in \Ga$ such that $\varphi(D)\gamma.p_\emph{\text{H}} \subset \Phi$.
    \item $\vol(t\in D: \varphi(t)\Ga \in \Psi)<\e \vol(D)$, where $\vol$ is the Lebesgue measure on $\R^n$.
\end{enumerate}
\end{theorem}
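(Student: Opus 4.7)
The plan is to apply the linearization method of Dani--Margulis--Shah. The key structural observation is that for $\text{H} \in \mathcal{H}_{\Ga}$, the $\Ad$-representation of $G$ on $\V_G$ satisfies $g \in N(\text{H}, U)$ if and only if $Ug.p_{\text{H}} = g.p_{\text{H}}$, since the stabilizer of $p_{\text{H}}$ in $G$ (via the $\bigwedge^{\dim \frak{h}}\Ad$-action) coincides with the normalizer of $\text{H}$ in $G$ up to finite index. This translates a dynamical question about $U$-orbits landing near a singular subset of $G/\Ga$ into a linear-algebraic question about polynomial trajectories of vectors in $\V_G$ relative to the discrete set $\Ga.p_{\text{H}}$.

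First, I would lift $C$ to a compact $\tilde{C} \subset N(\text{H}, U) - S(\text{H}, U)$ and define $F := \tilde{C}.p_{\text{H}} \subset \V_G$, which is compact. By the discreteness of $\Ga.p_{\text{H}}$ (Theorem \ref{ratner's theorem on discreteness}) combined with the removal of $S(\text{H}, U)$, the set $F$ is separated from the other relevant $\Ga$-translates in a uniform way. Concretely, one shows that for any sufficiently small neighborhood $\Phi$ of $F$ in $\V_G$, there is a neighborhood $\Psi$ of $C$ in $G/\Ga$ such that whenever $g\Ga \in \Psi$ and $g\gamma.p_{\text{H}} \in \Phi$ for some $\gamma \in \Ga$, the point $g\gamma$ lies close to $\tilde{C}$ and thus close to $N(\text{H}, U)$; this is where avoidance of $S(\text{H}, U)$ is crucial, since it prevents accumulation of distinct $\Ga$-branches.

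Next, I would invoke the $(C,\alpha)$-good property of polynomials of bounded degree: for any polynomial map $\psi : \R^n \to \V_G$ whose coordinates have degree at most $d$,
\begin{equation*}
    \vol\left(\{t \in D: \|\psi(t) - \textbf{x}_0\| < \rho\}\right) \le C\left(\frac{\rho}{\sup_{D}\|\psi - \textbf{x}_0\|}\right)^{\alpha}\vol(D),
\end{equation*}
uniformly in $\textbf{x}_0 \in \V_G$. Applied to $\psi(t) := \varphi(t)\gamma.p_{\text{H}}$ with $\textbf{x}_0$ ranging over $F$, this shows that unless $\varphi(D)\gamma.p_{\text{H}}$ is entirely contained in $\Phi$ (the first alternative), the relative measure of $t \in D$ for which $\varphi(t)\gamma.p_{\text{H}}$ is $\rho$-close to $F$ is at most a universal power of $\rho$. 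Summing over the finitely many $\gamma \in \Ga$ whose contribution could place $\varphi(t)\Ga$ inside $\Psi$ (finiteness again being guaranteed by discreteness of $\Ga.p_{\text{H}}$ in bounded sets), the second alternative follows by choosing $\rho$ so that the total estimate is below $\e \vol(D)$.

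The main obstacle is ensuring the uniformity of the neighborhood $\Psi$ with respect to both $\varphi$ and $D$. This uniformity depends on two inputs: (i) the discreteness of $\Ga.p_{\text{H}}$ together with the removal of $S(\text{H},U)$, which allows one to localize the preimage of $\Psi$ under the orbit map $g \mapsto g.p_{\text{H}}$ to finitely many $\Ga$-branches in a neighborhood of $F$; and (ii) the universal character of the $(C,\alpha)$-good constants, which depend only on $d$ and $n$. The proof is completed by a covering/compactness argument that balances the size of $\Psi$ against $\Phi$ to meet the threshold $\e$; the delicate step is arranging (i) so that branches correspond to a \emph{fixed} finite set determined by $C$, independently of $\varphi$, which is essentially what Shah's Proposition 5.4 in \cite{Shah1994LimitDO} establishes.
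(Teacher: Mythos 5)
The paper does not actually prove this statement: Theorem \ref{second dichotomy theorem of Shah} is quoted as a direct consequence (a special case) of Proposition 5.4 in \cite{Shah1994LimitDO}, so there is no internal argument to compare yours against. Judged on its own terms, your sketch correctly identifies the linearization framework of Dani--Margulis--Shah, but it contains genuine gaps. First, the step ``summing over the finitely many $\gamma\in\Ga$ whose contribution could place $\varphi(t)\Ga$ inside $\Psi$'' is not justified: the set of relevant $\gamma$ depends on $t$, on $\varphi$ and on $D$, and discreteness of $\Ga.p_{\text{H}}$ in bounded sets does not bound it, since $\varphi(D)\gamma.p_{\text{H}}$ is in general unbounded. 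The real difficulty in the linearization method is precisely the situation where two distinct translates $\varphi(t)\gamma_1.p_{\text{H}}$ and $\varphi(t)\gamma_2.p_{\text{H}}$ are simultaneously close to $F$; this is handled not by counting branches but by showing that such points lie near $N(\text{H}',U)\subset S(\text{H},U)$ for proper subgroups $\text{H}'$, which is exactly why the compact set $C$ is required to sit inside $[N(\text{H},U)-S(\text{H},U)]\Ga$ and why the conclusion is only a measure estimate on $D$. Your sketch glosses over this self-intersection analysis. Second, your use of the $(C,\alpha)$-good property is not in the form the argument needs: the bound $\vol\{t\in D:\|\psi(t)-\textbf{x}_0\|<\rho\}\le C\left(\rho/\sup_{D}\|\psi-\textbf{x}_0\|\right)^{\alpha}\vol(D)$ only helps if $\sup_D\|\psi-\textbf{x}_0\|$ is bounded below, and the negation of alternative 1 gives merely $\varphi(D)\gamma.p_{\text{H}}\not\subset\Phi$, a statement about a neighborhood of the whole compact set $F$ rather than about the distance to any single point $\textbf{x}_0$; the standard argument instead works with two nested neighborhoods $\Phi\subset\Phi'$ of $F$ and the growth property of polynomial trajectories relative to the larger one.

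More fundamentally, you concede at the end that the crucial uniformity --- a fixed finite branching structure determined by $C$, independent of $\varphi$ and $D$ --- ``is essentially what Shah's Proposition 5.4 in \cite{Shah1994LimitDO} establishes.'' Since the theorem to be proved \emph{is} that proposition specialized to the present setting, this makes your proposal circular as a standalone proof, whereas the paper's treatment is simply to cite Shah (with the underlying technology going back to \cite{Dani1993LimitDO}). As an explanation of how the result fits into the linearization method your outline is reasonable, but it should either be completed along the lines of Shah's actual argument or replaced, as in the paper, by a citation.
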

Fix $\e>0$ and $\text{H}\in\mathcal{H}_\Ga$. Recall that $\varphi_t(\cdot)$ which was defined in \eqref{eq:definition of q t v} is in $\mathcal{P}_{2,m}$. Let $C \subset [N({\text{H}}, U)-S({\text{H}},U)]\Ga$ be a compact set and  take a compact set $F\subset \V_G$ satisfying the outcome of Theorem \ref{second dichotomy theorem of Shah}. By the same argument as in the \ref{section 4.2} with Lemma \ref{our lemma expansion inequality for applying shah dichotomy} applied, for $t\in (\al_{\de}(T,\e_1), \la(T,\e_2))$ for all $T$ large enough, the first outcome of Theorem \ref{second dichotomy theorem of Shah} fails for $\varphi_t(D_{t,T})$. Then, for $t\in (\al_{\de}(T,\e_1), \la(T,\e_2))$ for all $T$ large,
\begin{equation}\label{eq:visits to Psi is negligble}
    \vol(v\in D_{T,t}:\varphi_t(v)\Ga \in \Psi)<\e \vol(D_{T,t}),
\end{equation}
where $C\subseteq\Psi$ is the compact neighborhood from Theorem \ref{second dichotomy theorem of Shah}. 
Now let $f\in C_c(G/\Ga)$ be such that $\mathbf{1}_C \le f \le 1$ and $\text{support}(f)\subset \Psi$, it follows that
\begin{align*}
\eta(C)\leq& \limsup_{T\to \infty}\mu_{T,g_1,g_2}^\circ(f)\\
=& \limsup_{T\to \infty} \frac{1}{\mu \left(V_{T}[g_1,g_2] \right)} \int_{\al_T}^{\be_T} \int_{D_{T,t}} f \left(\varphi_t(v)\Ga \right) dv\frac{1}{t^{m+2}}dt \\
=& \limsup_{T\to \infty} \frac{1}{\mu \left(V_{T}[g_1,g_2] \right)} \int_{\al_{\de}(T,\e_1)}^{\la(T,\e_2)} \int_{D_{T,t}} f \left(\varphi_t(v)\Ga \right) dv \frac{1}{t^{m+2}}dt \tag{by Lemmata \ref{lemma for integral from a to a delta} and \ref{lem:integral from c to b}}\\
\le& \limsup_{T\to \infty} \frac{1}{\mu \left(V_{T}[g_1,g_2] \right)} \int_{\al_{\de}(T,\e_1)}^{\la(T,\e_2)} \int_{D_{T,t}} \mathbf{1}_{\Psi} \left(\varphi_t(v)\Ga \right) dv \frac{1}{t^{m+2}}dt \\
\le& \limsup_{T\to \infty} \frac{1}{\mu \left(V_{T}[g_1,g_2] \right)} \int_{\al_{\de}(T,\e_1)}^{\la(T,\e_2)} \e \vol(D_{T,t}) \frac{1}{t^{m+2}}dt \\
\le & \e.
\end{align*}
Since $\e>0$ is arbitrary, $\eta(C)=0$ and thus $\eta(Y)=0$.

\section{Proof of Theorem \ref{equidistribution result on  G mod H}: The limiting measure on $H\backslash G$.}\label{sec:finializing the proof using GW}
\iffalse
We need to prove for all $\varphi\in C_c(X_{m,m+1})$, we have 
\begin{equation}
\lim_{T
\to\infty} \frac{1}{\#\Gamma_{T}}\sum_{\gamma\in \Ga_T}\varphi(x_{0}\cdot \gamma)=\int_{X_{m,m+1}}\varphi(x)d\tilde\nu_{x_{0}}(x)
\end{equation}
for some  measure $\tilde\nu_{x_0}$ on $X_{m,m+1}$ which depends on $x_{0}$.

The first thing we need to do is to construct a measure $\nu_{x_0}$ on $H\backslash G$. Since $G$ is unimodular while $H$ is not (the measure $\mu$ we defined is not right invariant), there is no $G$-invariant Haar measure on $H\backslash G$. 

Nevertheless, it is still possible to find a lift $Y\subset G$ of $H\backslash G$ such that $H \times Y \to G$ is a Borel isomorphism and decompose the Haar measure on $G$ to obtain a measure on $Y$. Here 
\fi

We follow Section 2.5 of \cite{Gorodnik2004DistributionOL}.
First, we provide an explicit measurable section $$\sigma:H\backslash G \to Y\subset G,$$and then we provide a measure $\nu_Y$ on $Y$ such that $dg=d\mu d\nu_Y$, where $d\mu$ is the left Haar measure on $H$ given by \eqref{eq:Haar measure on $H$} and $dg$ is the Haar measure on $G$ normalized such that $\vol(G/\Ga)=1$

To define the section,  let $\mathcal{F}_m \subset \begin{bmatrix}
\SL(m,\R) & 0\\
0 & 1
\end{bmatrix}$ denote a measurable fundamental domain of 
\begin{equation*}
   \begin{bmatrix}
    \Delta & 0\\
    0 & 1
    \end{bmatrix} \bigg\backslash 
   \begin{bmatrix}
    \SL(m,\R) & 0\\
    0 & 1
    \end{bmatrix} \bigg\slash  \begin{bmatrix}
    \SO(m,\R) & 0\\
    0 & 1
    \end{bmatrix} 
    \cong
    \Delta\backslash \SL(m,\R) /\SO(m,\R),
\end{equation*}
and consider the product $$Y:=\mathcal{F}_m\cdot \SO(m+1,\R).$$
\begin{lemma}
 The product map $H \times Y \to G$ is a Borel isomorphism. In particular $\sigma:H\backslash G\to Y$ defined by $$\sigma(Hg)=y,\text{ where $g=hy,\text{ for }h\in H,y\in Y$}$$ is a well defined section identifying measurably $H\backslash G$ with $Y$.   
\end{lemma}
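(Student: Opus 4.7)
The plan is to combine the Iwasawa decomposition of $G$ with the fundamental-domain property of $\mathcal{F}_m$. First I would invoke the Iwasawa decomposition $G=PK$ with $K=\SO(m+1,\R)$, and verify that $P\cap K$ equals the embedded copy $K_m:=\begin{bmatrix}\SO(m,\R) & 0\\ 0 & 1\end{bmatrix}$. This is a direct check: a matrix of the form $\begin{bmatrix} t^{-1/m}\eta & 0\\ v & t\end{bmatrix}$ is orthogonal only when $v=0$, $t=1$, and $\eta\in\SO(m,\R)$, by comparing the last row of $\rho\rho^{t}=I$.

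For existence of the factorization $g=hy$, I would take $g\in G$, write $g=pk$ with $p\in P,\ k\in K$, and decompose the $P$-factor uniquely as $p=u_v a_t\tilde{\eta}$ using the semidirect product structure of $P$. The fundamental-domain property of $\mathcal{F}_m$ lets me write $\eta=q\,\eta_0\,\kappa$ with $q\in\Delta$, $\eta_0\in\mathcal{F}_m$, $\kappa\in\SO(m,\R)$. Pushing $\tilde{q}$ into the $H$-factor and $\tilde{\kappa}$ into the $K$-factor yields
\[
g \;=\; (u_v a_t\tilde{q})\cdot\tilde{\eta}_0\cdot(\tilde{\kappa} k),
\]
where the first factor lies in $H$ and $\tilde{\eta}_0\cdot(\tilde{\kappa} k)\in\mathcal{F}_m\cdot K=Y$.

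For uniqueness, suppose $h_1 y_1=h_2 y_2$ with $y_i=\tilde{\eta}_{0,i}\rho_i$, $\eta_{0,i}\in\mathcal{F}_m$, $\rho_i\in K$. Rearranging gives $\tilde{\eta}_{0,2}^{-1}(h_2^{-1}h_1)\tilde{\eta}_{0,1}=\rho_2\rho_1^{-1}$; the left side lies in $P$ and the right in $K$, hence both lie in $K_m$. Writing $h_2^{-1}h_1=u_{v'}a_{t'}\tilde{q}'$ and comparing the last row with that of an element of $K_m$ forces $v'=0$ and $t'=1$. Thus $h_2^{-1}h_1=\tilde{q}'$ for some $q'\in\Delta$ and $\eta_{0,2}^{-1}q'\eta_{0,1}\in\SO(m,\R)$, so $\Delta\eta_{0,1}\SO(m,\R)=\Delta\eta_{0,2}\SO(m,\R)$; the fundamental-domain property then yields $\eta_{0,1}=\eta_{0,2}$, after which $q'=e$ and $\rho_1=\rho_2$ on the conull Borel subset where $\mathcal{F}_m$ meets each double coset in a point with trivial double stabilizer.

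Finally, Borel measurability of $(h,y)\mapsto hy$ is clear; its inverse is Borel because it is assembled from the Iwasawa factorization (which can be made Borel), the continuous factorization $p=u_v a_t\tilde{\eta}$, and a Borel choice of $(q,\eta_0,\kappa)$ relative to the Borel fundamental domain $\mathcal{F}_m$. The one real obstacle is handling possible non-trivial double-coset stabilizers of $\SO(m,\R)$ on the boundary of $\mathcal{F}_m$, which obstruct strict injectivity; however such points form a Haar-null Borel set, so $H\times Y\to G$ is a Borel isomorphism onto a conull Borel subset of $G$, which is all that is needed for the subsequent decomposition $dg=d\mu\,d\nu_Y$.
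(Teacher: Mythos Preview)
Your argument follows essentially the same route as the paper's: both use the block Iwasawa decomposition $G = P\cdot\SO(m+1,\R)$ together with $P\cap\SO(m+1,\R)=K_m$ to reduce uniqueness to the fundamental-domain property of $\mathcal{F}_m$. Your treatment is in fact more careful than the paper's on one point: the paper silently concludes $s_1=s_2$ once $g_2^{-1}qg_1\in\SO(m,\R)$, which strictly speaking requires trivial double-coset stabilizers, whereas you correctly observe that this may fail on a Haar-null set and that a conull Borel isomorphism is all that is needed for the measure decomposition $dg=d\mu\,d\nu_Y$.
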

\begin{proof}
  The surjectivity is clear from the block-wise Iwasawa decomposition. We only verify the injectivity here:

Suppose $h_1g_1s_1=h_2g_2s_2$ where $h_i\in H, g_i\in \mathcal{F}_m$ and $s_i\in \SO(m+1,\R)$. Then 
$$\begin{bmatrix}
* & 0 \\
* & *
\end{bmatrix}\ni g_2^{-1} h_2^{-1}h_1 g_1=s_2 s_1^{-1} \in \SO(m+1,\R),$$
and it follows from the definition of $\SO(m+1,\R)$ that both sides of the above equality must lie in $\begin{bmatrix}\SO(m,\R) & 0\\0 & 1 \end{bmatrix}$. So $h_2^{-1}h_1=g_2s_2s_1^{-1}g_1^{-1}\in \begin{bmatrix}\SL(m,\R) & 0\\0 & 1 \end{bmatrix}$. But $\begin{bmatrix}\SL(m,\R) & 0\\0 & 1 \end{bmatrix} \cap H= \begin{bmatrix}\Delta & 0\\0 & 1 \end{bmatrix}$. Hence $q g_1s_1=g_2s_2$ for some $q \in \begin{bmatrix}
\Delta & 0 \\0 & 1
\end{bmatrix}$. It follows that $g_2^{-1}q g_1=s_2 s_1^{-1}\in \begin{bmatrix}\SL(m,\R) & 0 \\0 & 1 \end{bmatrix} \cap \SO(m+1,\R)=\begin{bmatrix}\SO(m,\R) & 0 \\0 & 1 \end{bmatrix}$. Hence $g_1=g_2, s_1=s_2$ by the definition of fundamental domain, and $h_1=h_2$.  
\end{proof}

%Let $K=\{(k,k):k\in \mathcal{F}_m\cap \SO(m+1,\R) \}$. Then $\frac{\mathcal{F}_m\times \SO(m+1,\R)}{K} \cong \mathcal{F}_m \SO(m+1,\R)$

Next, let $P:=
\left\{ u_va_t\tilde{\text{g}}:\text{g}\in\SL(m,\R),~v\in\R^m,~t>0 \right\},$ where $u_v,a_t$ are given by \eqref{eq:definition of u_v a_t and qtilde}, and $\tilde{\text{g}}:=\begin{bmatrix}
    \text{g}&0\\0&1
\end{bmatrix}$. On $P$ we define the Haar measure  $dp=dv \frac{dt}{t^{m+2}} d\text{g}$, where $d\text{g}$ is the Haar measure on $\SL(m,\R)$ defined through standard Iwasawa decomposition, under which $\SO(m,\R)$ has volume $1$. We note the formula \begin{equation}
    \int_{\SL(m,\R)}f(\text{g})d\text{g}=\int_{\mathcal{F}_m}\sum_{q\in\Delta}\int_{\SO(m,\R)}f(q\text{g}\rho')d\rho' d\text{g}.\label{eq:unfolding formula on the double coset space}
\end{equation}
\begin{lemma}
  Let $d\rho$ the  Haar measure probability measure on $\SO(m+1,\R)$, and consider the measure  $\nu_Y$ on $Y$ defined by  \emph{$d\nu_Y:=d\rho d\text{g}$}. Then \begin{equation} \label{unfolding haar measure on G using section}
    dg=d\mu d\nu_Y,
\end{equation}where $\mu$ is \eqref{eq:Haar measure on $H$}, and $dg$ is the Haar measure on $G=\SL(m+1,\R)$ such that $\vol(G/\Ga)=1$.  
\end{lemma}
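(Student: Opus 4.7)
The plan is to verify the identity $dg = d\mu\,d\nu_Y$ by testing against an arbitrary $f \in C_c(G)$. The argument has three ingredients: an Iwasawa-type factorization of $dg$ along $G = P\cdot\SO(m+1,\R)$, the unfolding formula \eqref{eq:unfolding formula on the double coset space}, and an absorption of the resulting $\SO(m,\R)$-integration into the $\SO(m+1,\R)$-integration.

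First, I would show that $dg = dp\,d\rho$, where $dp = dv\,\frac{dt}{t^{m+2}}\,d\text{g}$ is the left Haar measure on $P$. Left-invariance of $dp$ can be verified by a direct Jacobian calculation paralleling the one for $H$ after \eqref{eq:def of the subgroups UADelta}: if $p_0 = u_{v_0}a_{t_0}\tilde{\text{g}}_0$, then $p_0 p$ has coordinates $v' = v_0 + t_0^{(m+1)/m}\,v\,\text{g}_0^{-1}$, $t' = t_0 t$, $\text{g}' = \text{g}_0\text{g}$, so the Jacobian equals $t_0^{m+1}\cdot t_0 = t_0^{m+2}$, exactly cancelled by the ratio $t^{m+2}/(t')^{m+2} = t_0^{-(m+2)}$ coming from the weight $t^{-(m+2)}$ in $dp$. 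Since $dp\,d\rho$ is thus left $P$-invariant and right $\SO(m+1,\R)$-invariant, it agrees with the bi-invariant Haar $dg$ up to a scalar by uniqueness; the convention $\vol(\SO(m,\R))=1$ absorbs the non-uniqueness of the $pk$-decomposition coming from the block-diagonal copy $P\cap\SO(m+1,\R)\simeq\SO(m,\R)$, so no extra scalar appears.

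For the second and third steps, for fixed $(v,t,\rho)$ I would apply \eqref{eq:unfolding formula on the double coset space} to the $\SL(m,\R)$-integral, yielding
$$\int_{\SL(m,\R)} f(u_v a_t \tilde{\text{g}}\,\rho)\,d\text{g} = \int_{\mathcal{F}_m}\sum_{q\in\Delta}\int_{\SO(m,\R)} f(u_v a_t \tilde q\,\tilde{\text{g}}\,\tilde\rho'\,\rho)\,d\rho'\,d\text{g}.$$
Because $\tilde\rho' \in \SO(m+1,\R)$, the left-invariance of the Haar probability on $\SO(m+1,\R)$ gives $\int_{\SO(m+1,\R)} f(\cdots\tilde\rho'\,\rho)\,d\rho = \int_{\SO(m+1,\R)} f(\cdots\rho)\,d\rho$ independently of $\rho'$; combined with $\int_{\SO(m,\R)} d\rho' = 1$, this erases the $\rho'$-integration entirely. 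Grouping $u_v a_t \tilde q \in H$ and $\tilde{\text{g}}\,\rho \in Y$, the sum $\sum_{q\in\Delta}\int dv\,\frac{dt}{t^{m+2}}$ reassembles into $d\mu$ by \eqref{eq:Haar measure on $H$}, and the remaining $d\text{g}\,d\rho$ is by definition $d\nu_Y$, which yields the claim.

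The main obstacle is the first step: matching normalizations so that $dp\,d\rho$ equals $dg$ on the nose rather than up to an undetermined scalar. Once the Haar measures on $H$, $\SL(m,\R)$, $\SO(m+1,\R)$, and $\SO(m,\R)$ are all fixed by the stated conventions (in particular $\vol(\SO(m,\R))=1$ and $d\rho$ being a probability measure), the Iwasawa-type factorization is a standard computation, and the remaining steps reduce to Fubini's theorem together with the already-noted translation invariance of the compact-group Haar measure.
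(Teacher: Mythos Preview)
Your proposal is correct and follows essentially the same route as the paper's proof: factor $dg$ as $dp\,d\rho$ along $G=P\cdot\SO(m+1,\R)$, unfold the $\SL(m,\R)$-integral over $\Delta\backslash\SL(m,\R)/\SO(m,\R)$ via \eqref{eq:unfolding formula on the double coset space}, absorb the $\SO(m,\R)$-factor into the $\SO(m+1,\R)$-Haar integral, and regroup into $d\mu\,d\nu_Y$. The only difference is that the paper obtains the first step by citing Theorem~8.32 of \cite{KN02} rather than carrying out the Jacobian computation you sketch; your direct verification is a fine substitute, and your observation about the normalization constant is apt---the paper's proof in fact establishes the identity only for \emph{a} Haar measure on $G$, with the specific normalization $\vol(G/\Ga)=1$ then fixing the scalar.
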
 
\begin{proof}
  by using Theorem 8.32 of \cite{KN02}, we unfold a Haar measure on $G$  as follows
\begin{align*}
    &\int_G f(g)dg\\
   =&\int_{\SO(m+1,\R)} \int_P f(p\rho) dp d\rho \\
   =&\int_{\SO(m+1,\R)}\int_{\SL(m,\R)} \int_0^{\infty}\int_{\R^m} f(u_v a_t \tilde{\text{g}} \rho)    dv \frac{dt}{t^{m+2}}d\text{g} d\rho \tag{by definition of $dp$}\\
   =&\int_{\SO(m+1,\R)}\sum_{q \in \Delta}\int_{\mathcal{F}_m}\int_{\SO(m,\R)}  \int_0^{\infty}\int_{\R^m} f(u_v a_t \tilde{q}\tilde{\text{g}}\tilde{\rho'} \rho)    dv \frac{dt}{t^{m+2}}d\text{g}  d\rho'd\rho\tag{by formula \eqref{eq:unfolding formula on the double coset space}}\\
   =& \int_{\SO(m+1,\R)}\sum_{q \in \Delta}\int_{\mathcal{F}_m} \int_0^{\infty}\int_{\R^m}  f(u_v a_t \tilde{q} \tilde{\text{g}}\rho)    dv \frac{dt}{t^{m+2}}d\text{g}d\rho\tag{invariance of $d\rho$}\\
   =& \int_{\SO(m+1,\R)}\sum_{q \in \Delta} \int_0^{\infty}\int_{\R^m} \int_{\mathcal{F}_m}  f(u_v a_t \tilde{q} \tilde{\text{g}}\rho)  d\text{g}  dv \frac{dt}{t^{m+2}}d\rho\\
   =&\int_H \int_{\SO(m+1,\R)}\int_{\mathcal{F}_m}  f(h \tilde{\text{g}}\rho)  d\text{g} d\rho  d\mu(h)\\
%   =&\text{Vol}(\SO(m,\R))^2 \int_H \int_{\frac{\mathcal{F}_m \times S}{K}}  f(h gK) d(gK)  dh
%   =&\text{Vol}(\SO(m,\R))^2\int_H \int_{\mathcal{F}_m S}  f(h g) dg  dh\\
   =& \int_H\left( \int_{\mathcal{F}_m}\int_{\SO(m+1,\R)}  f(h \tilde{\text{g}}\rho)  d\rho d\text{g}\right)  d\mu(h), 
\end{align*}
which proves the statement.  
\end{proof}

%For g_1\in G with decomposition %$g_1^{-1}:=k\begin{bmatrix}
%\text{g}_1 & 0 \\ \text{v}_1 & \det(\text{g}_1)^{-1}
%\end{bmatrix}, k\in \SO(m+1,\R)$. 

Fix $Hg_0\in H\backslash G$. By identifying $H \backslash G$ with $Y$, we define a measure on $H \backslash G$ via
\begin{equation}\label{relation between measures on H G and Y}
    d\nu_{Hg_0}(Hg):=\omega(\sigma(Hg_0),\sigma(Hg))d\nu_{Y}(\sigma(Hg)).
\end{equation}
where $\omega(\cdot,\cdot)$ is given in \eqref{eq:alpha}. 

\vspace{3mm}
In view of Theorem 2.2 and Corollary 2.4 in \cite{Gorodnik2004DistributionOL}, an immediate consequence of Theorem \ref{The $G$-invariance of limiting measure} is the following

\begin{corollary}
Fix $g_0\in G$. For any compactly supported $\varphi \in C_c(H\backslash G)$,
\begin{equation*}
    \lim_{T\to \infty}\frac{1}{\mu(H_T)}\sum_{\gamma \in \Ga_T}\varphi(Hg_0.\gamma)= \int_{H\backslash G}\varphi(Hg)d\nu_{Hg_0}(Hg).
\end{equation*}
\end{corollary}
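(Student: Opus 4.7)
The plan is to deduce the statement directly from the duality principle of Gorodnik--Weiss (Theorem 2.2 and Corollary 2.4 in \cite{Gorodnik2004DistributionOL}), whose hypotheses have essentially all been verified in the preceding sections. Concretely, the duality principle requires three ingredients: a uniform volume-growth estimate for the skewed balls $H_T[g_1,g_2]$ (property D1), existence and explicitness of the limiting volume ratios $\omega(g_1,g_2)$ (property D2), and an ergodic theorem for the $H$-action on $G/\Ga$ averaged along these skewed balls. The first ingredient is exactly Corollary \ref{Cor:D1}, the second is the content of the displayed formula \eqref{eq:alpha}, and the third is Theorem \ref{The $G$-invariance of limiting measure}. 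Thus the proof amounts to quoting \cite{Gorodnik2004DistributionOL} and checking that the explicit limit they produce matches $\nu_{Hg_0}$ as we have defined it.

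First I would recall the duality identity. Using the Borel isomorphism $H\times Y\to G$ and the decomposition $dg=d\mu\,d\nu_Y$ established just above, the lattice counting sum can be rewritten, via an unfolding $\Ga_T\mapsto \Ga\cap G_T$ and a change of variables using the section $\sigma$, in the form
\begin{equation*}
    \frac{1}{\mu(H_T)}\sum_{\gamma\in\Ga_T}\varphi(Hg_0\gamma)
    \;=\; \int_{H\backslash G}\Big(\tfrac{1}{\mu(H_T)}\int_{H_T[\sigma(Hg_0),\sigma(Hg)]} \tilde{F}_{\varphi}(h,g)\,d\mu(h)\Big) d\nu_Y(\sigma(Hg)) + o(1),
\end{equation*}
where $\tilde F_{\varphi}$ is the natural lift of $\varphi$ to a function on $G/\Ga$ depending on the two chosen sections; this is the content of Theorem 2.2 of \cite{Gorodnik2004DistributionOL}.

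Next I would pass to the limit $T\to\infty$ inside the $\nu_Y$-integral. On each fiber the inner average equals $\frac{\mu(H_T[\sigma(Hg_0),\sigma(Hg)])}{\mu(H_T)}$ times the skewed-ball average of Theorem \ref{The $G$-invariance of limiting measure}; by property D2 the volume ratio tends to $\omega(\sigma(Hg_0),\sigma(Hg))$, while the skewed-ball average converges to $\mu_X(\tilde F_{\varphi})$, which by the very definition of the lift equals $\varphi(Hg)$. Substituting the definition \eqref{relation between measures on H G and Y} of $\nu_{Hg_0}$ then yields the claimed equality. This is exactly the pathway of Corollary 2.4 in \cite{Gorodnik2004DistributionOL}, and it applies verbatim once the three hypotheses above are known.

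The delicate step, and the only one requiring care, is justifying the passage to the limit under the $\nu_Y$-integral. This requires dominated-convergence-style control: uniformity of the volume-ratio convergence for $\sigma(Hg)$ ranging over the projection of $\text{supp}(\varphi)$ to $Y$, together with a uniform ergodic-theorem statement over the same compact set. Uniformity of D1 over compact subsets of $G$ is exactly what Corollary \ref{Cor:D1} provides (and is the reason the error terms in Proposition \ref{computation for H and V} were tracked with implicit constants depending only on the bounded set $B$); uniformity of the convergence in Theorem \ref{The $G$-invariance of limiting measure} over compact parameter sets is part of the output of the Gorodnik--Weiss framework, since their theorems are formulated precisely to absorb such uniformity. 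With these two pieces in hand, the proof is complete by direct quotation of \cite{Gorodnik2004DistributionOL}.
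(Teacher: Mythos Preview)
Your approach is exactly the paper's: the Corollary is presented there as an immediate consequence of Theorem~2.2 and Corollary~2.4 of \cite{Gorodnik2004DistributionOL} together with Theorem~\ref{The $G$-invariance of limiting measure}, and you have correctly identified the three required inputs (D1 from Corollary~\ref{Cor:D1}, D2 from \eqref{eq:alpha}, and the ergodic theorem). Your informal unpacking of the Gorodnik--Weiss mechanism is a bit garbled---the displayed ``duality identity'' and the assertion that $\mu_X(\tilde F_\varphi)=\varphi(Hg)$ do not quite parse as written---but this is harmless, since the actual proof is a direct black-box citation and your identification of the hypotheses is correct.
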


%This proves Theorem \ref{equidistribution result on  G mod H} with $\tilde \nu_{x_0}=c_{\Gamma}\nu_{x_0}$, where $x_0\in X_{m,m+1}$ is identified with $Hg_0\in H\backslash G$ and 

%\frac{1}{m(G/\Ga)}

Now we would like to replace the normalization factor $\mu(H_T)$ by $\# \Ga_T$.
By \cite{DRS93},
\begin{equation*}
    \lim_{T\to \infty}\frac{\vol(G_T)}{\# \Ga_T}=1.
\end{equation*}
By formula A.1.15 in \cite{DRS93} and by Proposition \ref{computation for H and V}, the limit $$L:=\lim_{T\to \infty}\frac{\mu(H_{T})}{\vol(G_T)}$$ exists. Thus we conclude
\begin{equation*}
    \lim_{T\to \infty}\frac{\mu(H_{T})}{\#\Ga_T}=\lim_{T\to \infty}\frac{\mu(H_{T})}{\vol(G_T)}\frac{\vol(G_T)}{\#\Ga_T}=\lim_{T\to \infty}\frac{\mu(H_{T})}{\vol(G_T)}=L,
\end{equation*}
which shows that $$\lim_{T\to \infty}\frac{1}{\#\Ga_T}\sum_{\gamma \in \Ga_T}\varphi(Hg_0.\gamma)= L\int_{H\backslash G}\varphi(Hg)d\nu_{Hg_0}(Hg).$$

Our goal now will be to show that $L\nu_{Hg_0}(H\backslash G)=1$. We first have the following lemma.
\begin{lemma}
  It holds that $\nu_{H}(H\backslash G)<\infty$ and   $\nu_{Hg_0}(H\backslash G)=\nu_{H}(H\backslash G)$ for all $Hg_0\in H \backslash G.$
\end{lemma}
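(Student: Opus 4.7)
The plan is to compute $\nu_{Hg_0}(H\backslash G) = \int_Y \omega(\sigma(Hg_0), y)\, d\nu_Y(y)$ directly, using the explicit description $Y = \mathcal{F}_m \cdot \SO(m+1,\R)$ of the section together with the formula \eqref{eq:alpha} for $\omega$. The key idea is that, after parametrizing $Y$ and unfolding the sum over $\Delta$ into an integral over $\SL(m,\R)$, the $g_0$-dependence cancels completely.

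First I would note that $\omega(\cdot,\cdot)$ is left $H$-invariant in its first argument: since $H_T[hg_1, g_2] = h \cdot H_T[g_1, g_2]$ for $h \in H$, left-invariance of $\mu$ yields $\omega(hg_1, g_2) = \omega(g_1, g_2)$. Hence in the integrand I may replace $\sigma(Hg_0)$ by any convenient representative $g_0 \in Hg_0$. Parametrizing $y = \tilde{g}\rho$ with $\tilde{g} = \diag(g,1)$, $g \in \SL(m,\R)$, and $\rho \in \SO(m+1,\R)$, the block data in \eqref{matrix representation of g1 and g2} reduce to $\text{g}_2 = g$, $\text{v}_2 = 0$, $k_2 = \rho$, so that $\det(\text{g}_2) = 1$ and \eqref{eq:alpha} collapses to
$$\omega(g_0, \tilde{g}\rho) = \frac{|\det(\text{g}_0)|^m}{\sum_{q\in\Delta}\|q\|^{-m^2}} \sum_{q\in\Delta}\|\text{g}_0 \, q \, g\|^{-m^2},$$
independent of $\rho$. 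Since $d\nu_Y = d\rho \, d\text{g}$ and the $\rho$-integral contributes $1$, only the integral over $\mathcal{F}_m$ remains.

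The central computation is the unfolding formula \eqref{eq:unfolding formula on the double coset space}. Because the Hilbert--Schmidt norm is bi-$\SO(m,\R)$-invariant, the function $g \mapsto \|\text{g}_0 g\|^{-m^2}$ is right $\SO(m,\R)$-invariant, so
$$\int_{\mathcal{F}_m} \sum_{q\in\Delta} \|\text{g}_0 \, q \, g\|^{-m^2}\, d\text{g} = \int_{\SL(m,\R)} \|\text{g}_0 \, g\|^{-m^2}\, d\text{g}.$$
A polar decomposition $\text{g}_0 = QP$ with orthogonal $Q$ and positive-definite $P$ preserves the norm (by left-orthogonal invariance), so I may assume $\text{g}_0$ is positive-definite, and in particular $\det(\text{g}_0) > 0$. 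Writing $\text{g}_0 = \det(\text{g}_0)^{1/m} s_0$ with $s_0 \in \SL(m,\R)$ and applying left-invariance of Haar on $\SL(m,\R)$ gives
$$\int_{\SL(m,\R)} \|\text{g}_0 \, g\|^{-m^2}\, d\text{g} = \det(\text{g}_0)^{-m} \int_{\SL(m,\R)} \|g\|^{-m^2}\, d\text{g}.$$
The prefactor $|\det(\text{g}_0)|^m$ arising from \eqref{eq:alpha} cancels this Jacobian exactly, yielding
$$\nu_{Hg_0}(H\backslash G) = \frac{\int_{\SL(m,\R)} \|g\|^{-m^2}\, d\text{g}}{\sum_{q\in\Delta} \|q\|^{-m^2}},$$
manifestly independent of $g_0$.

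Finiteness then reduces to a single estimate applied twice. The denominator converges by Lemma \ref{lemma on the integral and summation in a ball} with $\sigma = -m^2 < -m(m-1)$ (valid since $m \geq 2$); the numerator converges by the identical layer-cake argument using the continuous analogue $\vol\{g \in \SL(m,\R) : \|g\| \leq T\} \asymp T^{m(m-1)}$ from \cite{DRS93}. No serious obstacle is anticipated --- the only non-trivial structural inputs are the left $H$-invariance of $\omega$, which is a direct consequence of its definition, and the bi-$\SO(m,\R)$-invariance of the Hilbert--Schmidt norm needed to apply the unfolding formula. The miraculous cancellation of the $|\det(\text{g}_0)|^{\pm m}$ factors, which enforces $g_0$-independence, is the arithmetic substance of the statement, and it reflects the underlying duality between the $\Gamma$-orbit on $H\backslash G$ and the $H$-action on $G/\Gamma$.
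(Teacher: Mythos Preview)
Your proof is correct and follows essentially the same route as the paper: unfold the sum over $\Delta$ into an integral over $\SL(m,\R)$ via \eqref{eq:unfolding formula on the double coset space}, then remove the $g_0$-dependence by left-invariance of the $\SL(m,\R)$ Haar measure, and establish finiteness by the layer-cake argument with the $T^{m(m-1)}$ volume asymptotic. The only difference is cosmetic: the paper takes $g_0=\sigma(Hg_0)\in Y$ from the outset, so that the block $\text{g}_0$ already lies in $\SL(m,\R)$ and the determinant factor is $1$---making your polar-decomposition and scaling step unnecessary, though harmless.
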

\begin{proof}
For $g_0,y \in Y$, we consider the decompositions 
\begin{equation} \label{decompositions of g_0 and y in Y}
    g_0:=\begin{bmatrix} \text{g}_0 & 0 \\0 & 1 \end{bmatrix}\rho_0,~ y:=\begin{bmatrix} \text{g} & 0 \\0 & 1 \end{bmatrix} \rho,
\end{equation}
where $\text{g}_0,\text{g}\in\mathcal{F}_m$ and $\rho_0,\rho\in\SO(m+1,\R).$ Hence $\alpha$ in \eqref{eq:alpha} takes the form: $ \omega(g_0,y)
    = \frac{\sum_{q\in \Delta}\frac{1}{\|\text{g}_0^{-1}q \text{g}\|^{m^2}}}{\sum_{q\in \Delta}\frac{1}{\|q \|^{m^2}}}.$
Using the $\SO(m+1,\R)$-invariance of the Hilbert-Schmidt norm, we compute 
\begin{align*}\label{folding trick}
    \nu_{Hg_0}(H\backslash G)
    =&\int_{Y}\omega(g_0,y)d\nu_{Y}\\
 %   =& \vol(\SO(m+1,\R)) \int_{\mathcal{F}_m}2\pi dg\\
    =&\frac{1}{\sum_{q\in \Delta}\frac{1}{\|q \|^{m^2}}} \int_{\SO(m+1,\R)}\int_{\mathcal{F}_m}\sum_{q\in \Delta}\frac{1}{\|\text{g}_0^{-1}q \text{g}\|^{m^2}} d\text{g} d\rho \tag{definition of $d\nu_Y$}\\
    =&\frac{1}{\sum_{q\in \Delta}\frac{1}{\|q \|^{m^2}}} \int_{\SL(m,\R)}\frac{1}{\|\text{g}_0^{-1}\text{g}\|^{m^2}}  d\text{g}\tag{by formula \eqref{eq:unfolding formula on the double coset space}}\\
    =&\frac{1}{\sum_{q\in \Delta}\frac{1}{\|q \|^{m^2}}} \int_{\SL(m,\R)}\frac{1}{\|\text{g}\|^{m^2}}  d\text{g}\tag{invariance of $d\text{g}$}\\
    =&\nu_{H}(H\backslash G).
\end{align*}
It remains to verify that $\int_{\SL(m,\R)}\frac{1}{\|\text{g}\|^{m^2}}  d\text{g}<\infty.$ By monotone convergence, it's enough to prove that $\int_{\SL(m,\R)_T}\frac{1}{\|\text{g}\|^{m^2}}  d\text{g}$ converges as $T\to\infty$, where $\SL(m,\R)_T=\{\text{g}\in\SL(m,\R):\|\text{g}\|\leq T\}.$  For that we use a Fubini trick, similarly to the proof of Lemma \ref{lemma on the integral and summation in a ball}: 
\begin{align*}
    \int_{\SL(m,\R)_T}\frac{1}{\|\text{g}\|^{m^2}}  d\text{g}&=\int_{\SL(m,\R)_T}\int_0^\infty1_{\{t\leq\|\text{g}\|^{-m^2}\}} dt d\text{g}\\
    &=\int_0^\infty \int_{\SL(m,\R)_T}1_{\{t\leq\|\text{g}\|^{-m^2}\}}d\text{g} dt\\
    &=\int_0^1 \vol\{\text{g}:\|\text{g}\|\leq \text{min}(T,t^{-\frac{1}{m^2}})\} dt,\tag{using that $\|\text{g}\|\geq 1,\forall\text{g}$}
\end{align*}where  $\vol$ stands for the $\SL(m,\R)$-Haar volume. The convergence is established by using formula A.1.15 in \cite{DRS93} which gives $\vol\{\SL(m,\R)_s\}\sim s^{m(m-1)}.$
\end{proof} Finally, we prove

\begin{proposition}
    $\nu_{Hg_0}(H\backslash G)=\nu_{H}(H\backslash G)=\lim_{T\to \infty}\frac{\vol(G_T)}{\mu(H_T)}=\frac{1}{L}$.
\end{proposition}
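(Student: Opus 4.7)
The last equality $\lim_{T\to\infty}\vol(G_T)/\mu(H_T)=1/L$ is immediate from the very definition $L:=\lim_{T\to\infty}\mu(H_T)/\vol(G_T)$ given earlier, together with the fact (established above) that both $\mu(H_T)$ and $\vol(G_T)$ grow like a positive constant times $T^{m(m+1)}$. The previous lemma also gives the first equality $\nu_{Hg_0}(H\backslash G)=\nu_H(H\backslash G)$. So the content is to show
\[
\nu_H(H\backslash G)=\lim_{T\to\infty}\frac{\vol(G_T)}{\mu(H_T)}.
\]

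The plan is to compute the asymptotics of $\vol(G_T)$ by unfolding the Haar integral in exactly the same coordinates that were used for $\mu(H_T)$, replacing the discrete sum over $\Delta$ by a continuous integral over $\SL(m,\R)$. Using \eqref{unfolding haar measure on G using section} and the bi-$\SO(m+1,\R)$-invariance of the Hilbert-Schmidt norm (so that the $\rho$-integral is trivial and gives a factor of $1$), one obtains
\[
\vol(G_T)=\int_{\SL(m,\R)}\left(\int_0^\infty\int_{\R^m}\mathbf{1}_{\|u_v a_t \tilde{\text{g}}\|\le T}\,dv\,\frac{dt}{t^{m+2}}\right)d\text{g}=\int_{\SL(m,\R)}\mu\bigl(V_T[e,\tilde{\text{g}}]\bigr)d\text{g}.
\]

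Now, for each fixed $\text{g}\in\SL(m,\R)$, Lemma \ref{lem:main estimate for large V skewed balls } (applied with $q=I_m$, $g_1=e$, $g_2=\tilde{\text{g}}$, noting $\det \text{g}_1=\det \text{g}_2=1$) gives the pointwise asymptotic
\[
\lim_{T\to\infty}\frac{\mu(V_T[e,\tilde{\text{g}}])}{T^{m(m+1)}}=\frac{C(m,e,e)}{\|\text{g}\|^{m^2}},
\]
while Lemma \ref{lem:estimate of V skewed balls as power of T and fract of A_m to power m^2} provides the uniform domination
\[
\frac{\mu(V_T[e,\tilde{\text{g}}])}{T^{m(m+1)}}\le \frac{v_m}{\|\text{g}\|^{m^2}}.
\]
Since $\int_{\SL(m,\R)}\|\text{g}\|^{-m^2}d\text{g}<\infty$ (verified by the Fubini argument in the previous lemma using the volume bound of \cite{DRS93}), the dominated convergence theorem yields
\[
\lim_{T\to\infty}\frac{\vol(G_T)}{T^{m(m+1)}}=C(m,e,e)\int_{\SL(m,\R)}\frac{d\text{g}}{\|\text{g}\|^{m^2}}.
\]

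Combining this with Proposition \ref{computation for H and V} applied with $g_1=g_2=e$, which gives $\lim_{T\to\infty}\mu(H_T)/T^{m(m+1)}=C(m,e,e)\sum_{q\in\Delta}\|q\|^{-m^2}$, the constants $C(m,e,e)$ cancel and we conclude
\[
\lim_{T\to\infty}\frac{\vol(G_T)}{\mu(H_T)}=\frac{\int_{\SL(m,\R)}\|\text{g}\|^{-m^2}d\text{g}}{\sum_{q\in\Delta}\|q\|^{-m^2}}=\nu_H(H\backslash G),
\]
where the second equality is the explicit expression for $\nu_H(H\backslash G)$ extracted in the proof of the previous lemma. The main step requiring care is the dominated convergence argument: one must use Lemma \ref{lem:estimate of V skewed balls as power of T and fract of A_m to power m^2} (which is sharp enough to give a $\text{g}$-independent, scale-invariant envelope $\|\text{g}\|^{-m^2}$), and then invoke the integrability of $\|\text{g}\|^{-m^2}$ on $\SL(m,\R)$ established earlier. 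The rest is bookkeeping.
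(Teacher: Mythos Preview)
Your argument is correct, and it takes a genuinely different (and in this case more direct) route than the paper.

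The paper proceeds by approximation: it picks $f_\e\in C_c(H\backslash G)$ close to $1$ in $L^1(\nu_H)$, unfolds $\frac{1}{\mu(H_T)}\int_{G_T}|f_\e-1|\,dg$ through the section $Y$ to obtain $\int_Y|f_\e-1|\frac{\mu(H_T[e,y])}{\mu(H_T)}\,d\nu_Y$, applies dominated convergence with the envelope $\sum_{q\in\Delta}\|q\text{g}\|^{-m^2}$, and then uses a triangle inequality together with Theorem~2.3 of Gorodnik--Weiss (the duality equidistribution result for $C_c$ functions) to conclude. So the paper's proof relies on the already-established equidistribution machinery.

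You instead compute $\vol(G_T)$ explicitly by unfolding the Haar integral through the Iwasawa/section decomposition, obtaining $\vol(G_T)=\int_{\SL(m,\R)}\mu(V_T[e,\tilde{\text g}])\,d\text g$, and then pass to the limit termwise. Your domination is the single-term bound $\mu(V_T[e,\tilde{\text g}])/T^{m(m+1)}\le v_m\|\text g\|^{-m^2}$ from Lemma~\ref{lem:estimate of V skewed balls as power of T and fract of A_m to power m^2}, which is valid for all $\text g$ (no boundedness assumption needed there), together with the integrability of $\|\text g\|^{-m^2}$ on $\SL(m,\R)$ proved in the preceding lemma. The pointwise limit comes from Lemma~\ref{lem:main estimate for large V skewed balls } applied with $q=I_m$ for each fixed $\text g$ (so $g_2=\tilde{\text g}$ sits in a one-point, hence bounded, set and the hypothesis $\|\text g\|\le\sqrt T$ holds for all large $T$). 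The identification of the resulting ratio with the explicit value of $\nu_H(H\backslash G)$ computed in the previous lemma is exactly right.

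What each approach buys: yours is self-contained and avoids invoking the Gorodnik--Weiss equidistribution theorem as a black box; it also makes the equality transparent as a ``continuous versus discrete'' comparison (integral over $\SL(m,\R)$ versus sum over $\Delta$) of the same skew-ball volume. The paper's approach is the standard template in the Gorodnik--Weiss framework and generalizes verbatim to other settings once Theorem~2.3 is available, at the cost of being slightly less explicit here.
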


\begin{proof}
%[Proof of the claim]\renewcommand{\qedsymbol}{\ensuremath{\#}}
    %The proof here follows from Theorem 2.3 in \cite{Gorodnik2004DistributionOL} and its proof.  
Since $\nu_H$ is a Radon measure, for any $\e>0$, we can choose $f_{\e}\in C_c(H\backslash G)$ with support $B_{\e}$  such that 
    \begin{equation*}
        \int_{Y}|f_{\e}(Hy)-1|\omega(e,y)d\nu_Y(y)=\int_{H\backslash G}|f_{\e}(Hg)-1|d\nu_H(Hg)\le \e.
\end{equation*}
As in \cite{Gorodnik2004DistributionOL}, we observe that    \begin{align*}
        \frac{1}{\mu(H_T)}\int_{G_T}|f_{\e}(Hg)-1|dg
        =&\frac{1}{\mu(H_T)}\int_Y\int_{\{h:\|hy\|<T\}}|f_{\e}(Hy)-1|d\mu(h)d\nu_Y(y)\\
        =&\frac{1}{\mu(H_T)}\int_Y |f_{\e}(Hy)-1|\mu(\{h:\|hy\|<T\})d\nu_{Y}(y)\\
        =&\int_Y |f_{\e}(Hy)-1| \frac{\mu(H_T[e,y])}{\mu(H_T)}d\nu_{Y}(y)
    \end{align*}
Recall that $\lim_{T\to\infty}\frac{\mu(H_T[e,y])}{\mu(H_T)}=\omega(e,y).$
We will use below the dominated convergence theorem, and for that we will now show that the integrand $\frac{\mu(H_T[e,y])}{\mu(H_T)}$ is bounded by a function in $L^1(Y)$ for large $T$. 
By \eqref{eq:bound on measure of Vga T} and by \eqref{eq:haar measure of skewed H ball as a sum of skewed balls on connected component}, we see  
\begin{equation*}
\frac{\mu(H_T[e,y])}{\mu(H_T)} \ll \sum_{q\in \Delta}\frac{1}{\|q \text{g}\|^{m^2}},\text{ where $y=\begin{bmatrix} \text{g} & 0 \\0 & 1 \end{bmatrix} \rho,~\text{g}\in \SL(m,\R),\rho\in\SO(m+1,\R)$}.
\end{equation*}
Since $\int_{\SL(m,\R)}\frac{1}{\|\text{g}\|^{m^2}}d\text{g}$ converges, it follows that $\Psi(y):=\sum_{q\in \Delta}\frac{1}{\|q \text{g}\|^{m^2}}\in L^{1}(Y)$ (more precisely, to see that this is a $L^1$ function, use \eqref{eq:unfolding formula on the double coset space}).

By the dominant convergence theorem, the second term satisfies
\begin{equation*}
     \lim_{T\to \infty}\int_{Y} |f_{\e}(Hy)-1| \frac{\mu(H_T[e,y])}{\mu(H_T)}d\nu_{Y}(y)= \int_{Y} |f_{\e}(Hy)-1| d\nu_{H}(y)\le \e.
\end{equation*} 
Therefore, by triangular inequality
\begin{align*}
    &\limsup_{T\to \infty}\left|\frac{\vol(G_T)}{\mu(H_T)}-\nu_H(H\backslash G)\right|\\
    \le & \limsup_{T\to \infty}\left|\frac{\vol(G_T)}{\mu(H_T)}-\frac{1}{\mu(H_T)}\int_{G_T}f_{\e}(Hg)dg\right|\\
    &+\limsup_{T\to \infty}\left|\frac{1}{\mu(H_T)}\int_{G_T}f_{\e}(Hg)dg-\int_{H\backslash G}f_{\e}(Hg)d\nu_H(g) \right|\\
    &+\limsup_{T\to \infty}\left|\int_{H\backslash G}f_{\e}(Hg)dg-\nu_H(H\backslash G) \right|\\
    \le& \e+0+\e \tag{the middle term vanishes because of Theorem 2.3, \cite{Gorodnik2004DistributionOL}}
\end{align*}
Now let $\e\to 0$, and this finishes the proof.
\end{proof}
Therefore, $\tilde\nu_{Hg_0}:=L\nu_{Hg_0}$ is a probability measure, and we conclude 
\begin{equation*}
    \lim_{T\to \infty}\frac{1}{\# \Ga_T}\sum_{\gamma \in \Ga_T}\varphi(Hg_0.\gamma)= \int_{H\backslash G}\varphi(Hg)d\tilde \nu_{Hg_0}(g).
\end{equation*}

\section*{Acknowledgements}
The authors would like to thank Nimish Shah for helpful discussions and generous sharing of his ideas leading to the proof of Lemma \ref{our lemma expansion inequality for applying shah dichotomy}. We thank Uri Shapira for many helpful discussions and encouragement. We thank Alex Gorodnik and Barak Weiss for their comments on a previous version of our manuscript. We would also like to thank Osama Khalil for discussions related to this work. Finally, we would like to thank an anonymous referee for their careful reading and valuable feedback which helped us improve many places in this paper. This work has received funding from the European Research Council (ERC) under the European Union’s Horizon 2020 Research and Innovation Program, Grant agreement No. 754475.

\printbibliography[
heading=bibintoc,
title={Bibliography}
]

@article{DRS93,
  title={Density of integer points on affine homogeneous varieties},
  author={William Duke and Ze{\'e}v Rudnick and Peter Sarnak},
  journal={Duke Math. J.},
  year={1993},
  volume={71},
  pages={143-179}
}

@article{Dani1980OrbitsOE,
  title={Orbits of euclidean frames under discrete linear groups},
  author={S.G.Dani and S. Raghavan},
  journal={Israel J. Math.},
  year={1980},
  volume={36},
  pages={300-320}
}

@article{Kleinbock2006DIRICHLETSTO,
  title={Dirichlet's theorem on diophantine approximation and homogeneous flows},
  author={Dmitry Kleinbock and Barak Weiss},
  journal={J. Mod. Dyn.},
  year={2006},
  volume={2},
  pages={43-62}
}

@article{Oh05,
author = {Oh, Hee},
year = {2005},
month = {07},
pages = {635-653},
title = {Lattice action on finite volume homogeneous spaces},
volume = {42},
journal = {J. Korean Math. Soc},
@doi = {10.4134/JKMS.2005.42.4.635}
}

@misc{gorodnik2022stationary,
      title={Stationary measures for $\mathrm{SL}_2(\mathbb{R})$-actions on homogeneous bundles over flag varieties}, 
      author={Alexander Gorodnik and Jialun Li and Cagri Sert},
      year={2022},
      eprint={2211.06911},
      archivePrefix={arXiv},
      primaryClass={math.DS}
}

@article{Sargent2017DynamicsOT,
  title={Dynamics on the space of 2-lattices in 3-space},
  author={Oliver Sargent and Uri Shapira},
  journal={Geom. Funct. Anal.},
  year={2017},
  volume={29},
  pages={890-948}
}

@article{Shah2000OnAO,
  title={On actions of epimorphic subgroups on homogeneous spaces},
  author={Nimish A. Shah and Barak Weiss},
  journal={Ergodic Theory Dynam. Systems},
  year={2000},
  volume={20},
  pages={567 - 592}
}

@article{symmetricdifferencesch2010,
author = {Schymura, Daria},
year = {2010},
month = {10},
pages = {},
title = {An upper bound on the volume of the symmetric difference of a body and a congruent copy},
journal = {Adv. Geom.},
@doi = {10.1515/advgeom-2013-0029}
}

@article{GN14duality,
  title={Ergodic theory and the duality principle on homogeneous spaces},
  author={Alexander Gorodnik and Amos Nevo},
  journal={Geom. Funct. Anal.},
  year={2014},
  volume={24},
  pages={159–244}
}

@article{Mozes1995OnTS,
  title={On the space of ergodic invariant measures of unipotent flows},
  author={Shahar Mozes and Nimish A. Shah},
  journal={Ergodic Theory Dynam. Systems},
  year={1995},
  volume={15},
  pages={149 - 159}
}

@article{Shah1994LimitDO,
  title={Limit distributions of polynomial trajectories on homogeneous spaces},
  author={Nimish A. Shah},
  journal={Duke Math. J.},
  year={1994},
  volume={75},
  pages={711-732}
}

@article{Ratner91a,
 author = {Marina Ratner},
 journal = {Ann. of Math.},
 number = {3},
 pages = {545--607},
 publisher = {Ann. of Math.},
 title = {On Raghunathan's Measure Conjecture},
 volume = {134},
 year = {1991}
}

@article{Dani1993LimitDO,
  title={Limit distributions of orbits of unipotent flows and values of quadratic forms},
  author={S. G. Dani and G. A. Margulis},
journal = {Adv. Soviet Math., vol. 16, Amer. Math. Soc., Providence,
RI},
  year={1993}
}

@article{Shah1996LimitDO,
  title={Limit distributions of expanding translates of certain orbits on homogeneous spaces},
  author={Nimish A. Shah},
  journal={Proceedings of the Indian Academy of Sciences - Mathematical Sciences},
  year={1996},
  volume={106},
  pages={105-125}
}

@article{Gorodnik2003LatticeAO,
  title={Lattice action on the boundary of $\text{SL}(n,\mathbb R )$},
  author={Alexander Gorodnik},
  journal={Ergodic Theory Dynam. Systems},
  year={2003},
  volume={23},
  pages={1817 - 1837}
}

@article{Gorodnik2004DistributionOL,
  title={Distribution of lattice orbits on homogeneous varieties},
  author={Alexander Gorodnik and Barak Weiss},
  journal={GAFA Geom. Funct. Anal.},
  year={2004},
  volume={17},
  pages={58-115}
}

@Book{KN02,
 author = { Anthony W. Knapp},
 publisher = {Springer},
 title = {Lie Groups: Beyond an Introduction, 2nd Edition},
 year = {2002}
}

@article {Roelcke56,
    AUTHOR = {Roelcke, Walter},
     TITLE = {\"{U}ber die {V}erteilung der {K}lassen eigentlich
              assoziierter zweireihiger {M}atrizen, die sich durch eine
              positiv-definite {M}atrix darstellen lassen},
   JOURNAL = {Math. Ann.},
    VOLUME = {131},
      YEAR = {1956},
     PAGES = {260--277},
      
}

@article {Schmidt98,
    AUTHOR = {Schmidt, Wolfgang M.},
     TITLE = {The distribution of sublattices of {${\bf Z}^m$}},
   JOURNAL = {Monatsh. Math.},
    VOLUME = {125},
      YEAR = {1998},
    NUMBER = {1},
     PAGES = {37--81},
}

@article {Maass59,
    AUTHOR = {Maass, Hans},
     TITLE = {\"{U}ber die {V}erteilung der zweidimensionalen {U}ntergitter
              in einem euklidischen {G}itter},
   JOURNAL = {Math. Ann.},
    VOLUME = {137},
      YEAR = {1959},
     PAGES = {319--327},
}

@book{mar_thesis,
	Author = {Grigoriy A. Margulis},
	Date-Added = {2022-05-29 16:39:12 +0300},
	Date-Modified = {2022-05-29 16:40:59 +0300},
	Edition = {1},
	Publisher = {Springer Berlin, Heidelberg},
	Series = {Springer Monographs in Mathematics},
	Title = {On Some Aspects of the Theory of Anosov Systems},
	Year = {2004}}

@book {GN_book_ergodic_theory,
    AUTHOR = {Gorodnik, Alexander and Nevo, Amos},
     TITLE = {The ergodic theory of lattice subgroups},
    SERIES = {Ann. of Math. Stud.},
    VOLUME = {172},
 PUBLISHER = {Princeton University Press, Princeton, NJ},
      YEAR = {2010},
     PAGES = {xiv+121},
}

@article {Led99,
    AUTHOR = {Ledrappier, Fran\c{c}ois},
     TITLE = {Distribution des orbites des r\'{e}seaux sur le plan r\'{e}el},
   JOURNAL = {C. R. Acad. Sci. Paris S\'{e}r. I Math.},
  FJOURNAL = {Comptes Rendus de l'Acad\'{e}mie des Sciences. S\'{e}rie I.
              Math\'{e}matique},
    VOLUME = {329},
      YEAR = {1999},
    NUMBER = {1},
     PAGES = {61--64},
}
\end{document}